\newcommand{\mathb}[1]{\mbox{\boldmath$#1$}}
\newcommand{\sumk}{\sum_{i=1}^k}
\newcommand{\vari}{\pi_i (1-\pi_i)}
\newcommand{\hatvari}{\hat \pi_i (1-\hat \pi_i)}
\newtheorem{theorem}{Theorem}
\newtheorem{lemma}{Lemma}
\newtheorem{corollary}{Corollary}
\newtheorem{remark}{Remark}
\begin{document}

\title{ Testing homogeneity of proportions from sparse binomial data with a large number of groups  
}


\author{Junyong Park \thanks{Department of Mathematics and Statistics, University of Maryland Baltimore County, 1000 Hilltop Circle, Baltimore, MD 21250,  U.S.A.\texttt{junpark@umbc.edu}}
}
\date{}


\maketitle

\begin{abstract}
In this paper, we consider testing the homogeneity for proportions  in  independent binomial distributions
especially when data are sparse for large number of groups.
We provide broad aspects of our proposed tests such  as  theoretical studies, simulations and real data application.
We present the asymptotic null distributions and asymptotic powers for our proposed tests
and compare their performance with existing tests.
Our simulation studies show that none of tests dominate the others, however our proposed test and a few tests are expected to
control given sizes and obtain significant powers.
We also present a real example regarding safety concerns associated with Avandiar (rosiglitazone) in Nissen and Wolsky (2007).
\\
\noindent {\bf keywords} : Asymptotic distribution; homogeneity of proportions; sparse data
\end{abstract}
\section{Introduction}
\label{intro}
An important step in statistical meta-analysis is to carry out appropriate tests of homogeneity of the relevant {\it effect sizes}
before pooling of evidence or information across studies. While the familiar Cochran's (1954) chi-square goodness-of-fit
test is widely used in this context, it turns out that this test may perform poorly in terms of {\it not} maintaining Type I error
rate in many problems.
In particular,
this is indeed a serious drawback of Cochran's test for testing the homogeneity of several proportions in case of sparse data.
A recent meta-analysis (Nissen and Wolsky (2007), addressing the cardiovascular safety concerns associated with
(rosiglitazone), has received wide attention (Cai et al.(2010), Tian et al. (2009),
Shuster et al. (2007), Shuster
(2010), Stijnen et al. (2010)). Two difficulties seem to appear in this study: first, study sizes (N) are highly unequal, especially in
control arm, with
over $95 \%$ of the studies having sizes below 400 and two studies having sizes over 2500; second, event rate is extremely low,
especially for death end point,
with the maximum death rate in the treatment arm being $2 \%$, while in control arm, over $80 \%$ of the studies have zero events.
The original meta-analysis (Nissen and Wolski (2007))
was performed under fixed effects framework, as the diagnostic test based on Cochran's chi-square test failed to
reject homogeneity. However, with two large studies dominating the combined result, people agree random effects analysis is the
superior choice over fixed effects
(Shuster et al. (2007)). Moreover, the results for the fixed and random effects analyses are discordant. While different fixed effects
and random effects approaches are proposed, the problem of testing for homogeneity of effect sizes
is less familiar, and often not properly addressed. This is precisely the object of this paper, namely, a thorough
discussion of tests of homogeneity of proportions in case of sparse data situations.
 Recently, there are some studies on testing the equality of means when
the number of groups increases with fixed sample sizes in either  ANOVA (analysis of variance) or MANOVA (multivariate analysis of variance). For example, see
Bathke and Harrar (2008), Bathke and Lankowski (2005) and Boos and Brownie (1995).
Those studies have limitation in asymptotic results since they assume all samples sizes are equal, i.e., balanced design.
On the other hand, we actually emphasize the case that sample sizes are highly unbalanced and present more fluent asymptotic results
for a variety cases including unbalanced cases and small values of proportions in binomial distributions.

In this paper, we first point out that the classical chi-square test may fail in controlling a size when the number of groups is high and data are sparse.
We modify the classical chi-square test with providing asymptotic results.
Moreover, we propose two new tests for homogeneity of proportions when there are many groups with sparse count data.
Throughout this study, we present some theoretical conditions under which our proposed tests achieve the asymptotic normality while most of existing tests doesn't have rigorous investigation of asymptotic properties.

A formulation of the
testing problem for proportions is provided in Section 2 along with a review of the literature and suggestion for  {\it new} tests.
The necessary
asymptotic theory
to ease the application of the suggested test is developed.
Results of simulation studies are reported in Section 3 and  an application to the Nissen-Wolski (2007) data set is
made in Section 4. Concluding remark is presented in section 5.

\section{Testing the homogeneity of proportions with sparse data}
In this section,  we present a modification of a classical test which is Cochran's test and also propose two types of new tests.
Throughout this paper, our theoretical studies are based on triangular array
which is commonly used in asymptotic theories in high dimension.
See Park and Ghosh (2007) and Park (2009) for triangular array in binary data and  Greenshtein and Ritov (2004) for more general cases.
More specifically, let $\Theta^{(k)} = \{ (\pi_1^{(1)}, \pi_2^{(2)}, \ldots, \pi_k^{(k)}) :  0<\pi_{i}^{(k)}<1~~\mbox{for $1\leq i \leq k$}\}$
be the parameter space in which $\pi_i^{(k)}$s are allowed to be varying depending on $k$ as $k$ increases.
Additionally,  sample sizes $(n_1^{(1)}, \ldots, n_k^{(k)})$ also changes depending on $k$. However, for notational  simplicity, we suppress
superscript $k$ from $\pi_i^{(k)}$ and $n_i^{(k)}$.
The triangular array provides more flexible situations, for example all increasing sample sizes and all decreasing $\pi_i$s.
On the other hand, the asymptotic results in   Bathke and Lankowski (2005) and Boos and Brownie (1995)
are based on increasing $k$ but all sample sizes and $\pi_i$s are fixed. This set up provides somewhat limited results while we present
the asymptotic results on the triangular array.
Our results will include the asymptotic power functions of proposed tests while
existing studies do not provide them.
\subsection{Modification of Cochran's Test}
Suppose there are $k$ independent populations and the $i$th population
has $X_{i} \sim Binomial(n_i, \pi_i)$. Denote the total sample size and the weighted average of $\pi_i$'s by $N=\sumk n_i$ and $\bar
\pi = \frac{1}{N} \sumk n_i \pi_i$, respectively.  We are interested in testing the homogeneity of $\pi_{i}$'s from different
groups,
\begin{eqnarray} H_0 :  \pi_1 = \pi_2 = \cdots = \pi_k \equiv \pi (unknown).
\label{eqn:hypothesis}
\end{eqnarray}

To test the above hypothesis in (\ref{eqn:hypothesis}),
one familiar procedure is
Cochran's chi-square test in Cochran (1954),
namely $T_S$:

\begin{eqnarray} {T}_S = \sumk \frac{( X_i - n_i \hat {\bar \pi} )^2 }{n_i \hat {\bar \pi} (1-\hat {\bar \pi})}
\label{eqn:TS}
\end{eqnarray}

where $\hat \pi =
\frac{\sumk X_i }{\sumk n_i}$.  ${\cal T}_S$ uses
an approximate chi-square distribution with degrees of freedom $(k-1)$ under $H_0$.
The $H_0$ is rejected when ${T}_S > \chi^2_{1-\alpha, k-1}$
where $\chi^2_{1-\alpha, k-1}$ is the  $1-\alpha$ quantile of chi-square distribution with degrees of freedom
 $(k-1)$.
In particular, when $k$ is large,  $ \frac{T_S -k}{\sqrt{2k}}$ is approximated by a standard normal distribution under $H_0$.
Although Cochran's test for homogeneity is widely used, the approximation to the $\chi^2$ distribution
of $T_S$ or normal approximation may be poor when the sample
sizes within the groups are small or {when some counts in one of the two categories are low}.
This is partly because the test statistic becomes noticeably discontinuous and partly because its moments beyond the first
may be rather different from those of $\chi^2$.

We demonstrate that the asymptotic chi-square approximation to ${T}_S$ or
normal approximation based on $\frac{{T}_S-k}{\sqrt{2k}}$  may be very
poor when $k$ is large or $\pi_i$s are small compared to $n_i$s.
We provide the following theorem and propose a modified approximation to $T_S$ which is expected to
provide more accurate approximation.
Let us define
\begin{eqnarray}
T=\frac{{\cal T}_S - E({\cal T}_S)}{\sqrt{{\cal B}_k}}
\label{eqn:modTS}
\end{eqnarray}
where ${\cal T}_S = \sum_{i=1}^k \frac{(X_i - n_i \bar \pi)^2}{n_i \bar \pi (1-\bar \pi)}$,
${\cal B}_k \equiv  Var({\cal T}_S)  = \sum_{i=1}^k Var \left(\frac{(X_i - n_i \bar \pi)^2}{n_i \bar \pi (1-\bar \pi)} \right) \equiv  \sum_{i=1}^k B_i$ and
\begin{eqnarray*}
Var\left(\frac{X_i - n_i \bar \pi}{n_i \bar \pi (1- \bar \pi)}  \right)  = B_i &=& \frac{2\pi_i^2(1-\pi_i)^2}{\bar \pi^2 (1-\bar \pi)^2} + \frac{\pi_i(1-\pi_i)(1-6\pi_i(1-\pi_i))}{n_i\bar \pi^2(1-\bar \pi)^2} \\
&&+  \frac{3\pi_i(1-\pi_i)(1-2\pi_i)(\pi_i-\bar \pi)}{\bar \pi^2 (1-\bar \pi)^2} + \frac{4n_i \pi_i(1- \pi_i)(\pi_i-\bar \pi)^2}{\bar \pi^2 (1-\bar \pi)^2}, \\
E({\cal T}_S) &=& \sum_{i=1}^k \left( \frac{n_i(\pi_i -\bar \pi)^2 }{\bar \pi(1-\bar \pi)} + \frac{\pi_i(1-\pi_i)}{\bar \pi (1-\bar \pi)} \right).
\end{eqnarray*}
Note that ${\cal T}_S$ is not a statistic since it still includes the unknown parameter $\bar \pi = \sum_{i=1}^k \frac{n_i \pi_i}{N}$.
It will be shown later that $\bar \pi$ can be replaced by $\hat {\bar \pi} = \frac{1}{N} \sum_{i=1}^k n_i \hat \pi_i$ under $H_0$
since $\hat {\bar \pi}$ has the ratio consistency ($\frac{\hat {\bar \pi}}{\bar \pi} \rightarrow 1$ in probability)   under some mild conditions.
Define
\begin{eqnarray*}
{\cal B}_{0k} = \sum_{i=1}^k B_{0i} = \sum_{i=1}^k \left(2-\frac{6}{n_i} + \frac{1}{n_i \bar \pi(1-\bar \pi)} \right)
\end{eqnarray*}
and
\begin{eqnarray}
T_0 =  \frac{{\cal T}_S - k}{\sqrt{{\cal B}_{0k}}}
\label{eqn:modTS0}
\end{eqnarray}
which is the $T$ defined in (\ref{eqn:modTS})
under $H_0$ since $E({\cal  T}_S) =k$ and ${\cal B}_k = {\cal B}_{0k}$ under $H_0$.
The following theorem shows the asymptotic properties of $T_0$ in (\ref{eqn:modTS0}).
\begin{theorem}
For $\theta_i=\pi_i(1-\pi_i)$ and $\bar \theta = \bar \pi(1-\bar \pi)$, if $\frac{\sumk \left(\theta_i^4 + \frac{\theta_i}{n_i} \right)}{(\bar \pi (1-\bar \pi))^4 {\cal B}_k^2 } \rightarrow 0$ and $
\frac{\sumk n_i^2 \theta_i (\pi_i -\bar \pi)^4 (\theta_i + \frac{1}{n_i}) }{(\bar \pi (1-\bar \pi))^4{\cal B}_k^2} \rightarrow 0$ as  $k\rightarrow \infty$, then we have
\begin{eqnarray*}
P( T_0 > z_{1-\alpha}) - \bar \Phi \left( \frac{z_{1-\alpha}}{\sigma_k} -\mu_k \right) \rightarrow 0
\end{eqnarray*}
where
$\mu_k = \frac{E({\cal T}_S)-k}{\sqrt{{\cal B}_{k}}}$, $\sigma^2_k = \frac{{\cal B}_k}{{\cal B}_{0k}}$ and
$\bar \Phi (z) = 1-\Phi (z) =P(Z\geq z)    $ for a standard normal distribution $Z$.
\label{thm:modTS}
\end{theorem}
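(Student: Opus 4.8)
The plan is to establish an asymptotic normality result for ${\cal T}_S$ and then transfer it to the stated one‑sided tail statement. Write $Y_i=\frac{(X_i-n_i\bar\pi)^2}{n_i\bar\pi(1-\bar\pi)}$, so that ${\cal T}_S=\sum_{i=1}^k Y_i$ is a sum of independent (non‑identically distributed) summands forming a triangular array, with $E({\cal T}_S)=\sum_i E(Y_i)$ and $Var({\cal T}_S)={\cal B}_k=\sum_i B_i$. Put $W_i=X_i-n_i\pi_i$, a centred binomial with $EW_i=0$ and $EW_i^2=n_i\theta_i$, and $d_i=n_i(\pi_i-\bar\pi)$; then $X_i-n_i\bar\pi=W_i+d_i$ and, since $d_i$ is deterministic, the $d_i^2$ term cancels in centring, giving
\[
Y_i-E(Y_i)=\frac{(W_i^2-n_i\theta_i)+2d_iW_i}{n_i\bar\theta}.
\]
The first and principal step is to show $Z_k:=\frac{{\cal T}_S-E({\cal T}_S)}{\sqrt{{\cal B}_k}}\cd N(0,1)$. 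For this I would invoke the Lyapunov central limit theorem with exponent $4$, so it suffices to verify $L_k:=\frac{1}{{\cal B}_k^2}\sum_{i=1}^k E\big(Y_i-E(Y_i)\big)^4\to0$.

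Next I would bound these fourth central moments. By H\"older's inequality and weighted AM--GM, $E\big(Y_i-E(Y_i)\big)^4\le\frac{C}{(n_i\bar\theta)^4}\big(E(W_i^2-n_i\theta_i)^4+d_i^4\,EW_i^4\big)$, which also absorbs the cross terms between $W_i^2-n_i\theta_i$ and $d_iW_i$. Using the moment bound $E|W_i|^m\le C_m\big(n_i\theta_i+(n_i\theta_i)^{m/2}\big)$ valid for $m\ge2$ (a consequence of Rosenthal's inequality together with the fact that a centred Bernoulli$(\pi_i)$ is bounded by $1$ and has $m$‑th absolute moment at most $\theta_i$), expanding $(W_i^2-n_i\theta_i)^4$ and collecting powers of $n_i\theta_i$ yields $E(W_i^2-n_i\theta_i)^4\le C\big(n_i\theta_i+(n_i\theta_i)^4\big)$, while $d_i^4\,EW_i^4\le C\,n_i^4(\pi_i-\bar\pi)^4\big(n_i\theta_i+(n_i\theta_i)^2\big)$. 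Dividing by $(n_i\bar\theta)^4$, using $n_i\ge1$, summing over $i$ and dividing by ${\cal B}_k^2$, the first contribution is at most a constant times $\frac{\sum_{i=1}^k(\theta_i^4+\theta_i/n_i)}{(\bar\pi(1-\bar\pi))^4{\cal B}_k^2}$ and the second at most a constant times $\frac{\sum_{i=1}^k n_i^2\theta_i(\pi_i-\bar\pi)^4(\theta_i+1/n_i)}{(\bar\pi(1-\bar\pi))^4{\cal B}_k^2}$. These are exactly the two hypotheses of the theorem, so $L_k\to0$ and hence $Z_k\cd N(0,1)$.

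Finally I would convert this into the tail statement. Since the limiting distribution function $\Phi$ is continuous, P\'olya's theorem upgrades the convergence to $\sup_{t\in\mathbb{R}}|P(Z_k\le t)-\Phi(t)|\to0$. A short rearrangement gives $T_0=\frac{{\cal T}_S-k}{\sqrt{{\cal B}_{0k}}}=\sigma_k Z_k+\sigma_k\mu_k$, with $\sigma_k=\sqrt{{\cal B}_k/{\cal B}_{0k}}>0$ and $\mu_k=\frac{E({\cal T}_S)-k}{\sqrt{{\cal B}_k}}$, so $\{T_0>z_{1-\alpha}\}=\{Z_k>z_{1-\alpha}/\sigma_k-\mu_k\}$. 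Writing $t_k=z_{1-\alpha}/\sigma_k-\mu_k$, uniform convergence of the distribution function gives $P(T_0>z_{1-\alpha})-\bar\Phi(t_k)=\Phi(t_k)-P(Z_k\le t_k)\to0$, which is the assertion; note that no assumption on the convergence of $\sigma_k$ or $\mu_k$ is needed here, precisely because the convergence in P\'olya's theorem is uniform in $t$.

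I expect the main obstacle to be the moment bookkeeping in the second step: establishing the binomial moment bound uniformly in $\pi_i$ — in particular in the sparse regime $\pi_i\to0$ (or $1$), where the $m$‑th moment of $W_i$ scales like $n_i\theta_i$ rather than $(n_i\theta_i)^{m/2}$ — and then checking that every term produced by the expansion of the fourth central moment, including all cross terms, is dominated by the two quantities appearing in the hypotheses. Once $L_k\to0$ is in hand, the Lyapunov step, P\'olya's theorem, and the final algebraic rearrangement are routine.
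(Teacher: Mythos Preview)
Your proposal is correct and follows essentially the same route as the paper: verify the Lyapunov condition with fourth moments for the standardized sum $\frac{{\cal T}_S-E({\cal T}_S)}{\sqrt{{\cal B}_k}}$, bound $E(Y_i-E(Y_i))^4$ via the decomposition $X_i-n_i\bar\pi=W_i+n_i(\pi_i-\bar\pi)$, and then rearrange $T_0=\sigma_k Z_k+\sigma_k\mu_k$ to get the tail statement. The only minor differences are that the paper obtains the binomial moment bounds by direct computation (its Lemma~4 gives $n^4E(\hat\pi-\pi)^8\le C\max\{\theta^4,\theta/n\}$) rather than via Rosenthal's inequality, and that you explicitly invoke P\'olya's theorem to justify evaluating the limiting distribution at the moving point $t_k=z_{1-\alpha}/\sigma_k-\mu_k$, a uniformity issue the paper leaves implicit.
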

\begin{proof}
See Appendix.
\end{proof}
%
%

\bigskip
We propose to use a test which rejects the $H_0$ if
\begin{eqnarray}
T_{\chi} \equiv  \frac{{T}_S - k}{ \sqrt{\hat {\cal B}_{0k}}} > z_{1-\alpha}
\label{eqn:modifiedtest}
\end{eqnarray}
where $z_{1-\alpha}$ is the $1-\alpha$ quantile of a standard normal distribution,
 $\hat {\cal B}_{0k} =  \sum_{i=1}^k \left(2-\frac{6}{n_i} + \frac{1}{n_i \hat {\bar \pi}(1-\hat {\bar \pi})} \right)$
and   $\hat{\bar \pi} = \frac{\sum_{i=1}^k n_i \hat \pi_i}{N}$.

\bigskip
Using Theorem \ref{thm:modTS}, we obtain the following results which states that
our proposed modification of Cochran's test in (\ref{eqn:modifiedtest}) is the asymptotically size $\alpha$ test while
$\frac{{T}_S -k}{\sqrt{2k}}$ may fail in controlling a size $\alpha$
under some conditions.

\begin{corollary}
Under $H_0$ and the conditions in Theorem \ref{thm:modTS},
$T_{\chi}$ in (\ref{eqn:modifiedtest}) is asymptotically size $\alpha$ test.
A normal approximation to $\frac{{T}_S -k}{\sqrt{2k}}$ is not asymptotically  size $\alpha$ test
unless $\frac{{\cal B}_{0k}}{2k} \rightarrow 1$.
\label{cor:inconsistent}
\end{corollary}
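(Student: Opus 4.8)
The plan is to deduce both assertions from Theorem~\ref{thm:modTS} once $\bar\pi$ has been replaced by its estimator $\hat{\bar\pi}$, the latter being legitimate under $H_0$ by the ratio consistency $\hat{\bar\pi}/\bar\pi\to 1$ in probability recorded before the theorem. Under $H_0$ one has $E({\cal T}_S)=k$ and ${\cal B}_k={\cal B}_{0k}$, so $\mu_k=0$ and $\sigma_k^2={\cal B}_k/{\cal B}_{0k}=1$; applying Theorem~\ref{thm:modTS} under $H_0$, and noting that $z_{1-\alpha}$ may be taken to be an arbitrary point of $\mathbb{R}$ as $\alpha$ ranges over $(0,1)$, one obtains that $T_0=({\cal T}_S-k)/\sqrt{{\cal B}_{0k}}$, with $T_0$ as in (\ref{eqn:modTS0}), converges in distribution to a standard normal $Z$.

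For the first assertion I would pass from $T_0$, which still contains the unknown $\bar\pi$ inside both ${\cal T}_S$ and ${\cal B}_{0k}$, to the genuine statistic $T_\chi$ of (\ref{eqn:modifiedtest}). Expanding $X_i-n_i\hat{\bar\pi}=(X_i-n_i\bar\pi)-n_i(\hat{\bar\pi}-\bar\pi)$ and using the defining identity $\sum_{i=1}^k(X_i-n_i\hat{\bar\pi})=0$, one finds that $T_S$ equals ${\cal T}_S\cdot\bar\pi(1-\bar\pi)/\{\hat{\bar\pi}(1-\hat{\bar\pi})\}$ up to a rank-one correction of order $O_p(1)$; combining this with $\hat{\bar\pi}/\bar\pi\to 1$ in probability (whence $\hat{\bar\pi}(1-\hat{\bar\pi})/\{\bar\pi(1-\bar\pi)\}\to 1$ and $\hat{\cal B}_{0k}/{\cal B}_{0k}\to 1$ in probability) and with the hypotheses of Theorem~\ref{thm:modTS}, I would show that $T_\chi=T_0+o_p(1)$, so that $T_\chi$ also converges in distribution to $Z$ under $H_0$. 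It then follows that $P(T_\chi>z_{1-\alpha})\to\bar\Phi(z_{1-\alpha})=\alpha$, that is, $T_\chi$ is asymptotically of size $\alpha$.

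For the second assertion I would use the identity $(T_S-k)/\sqrt{2k}=T_\chi\sqrt{\hat{\cal B}_{0k}/(2k)}$ together with $\hat{\cal B}_{0k}/{\cal B}_{0k}\to 1$ in probability, so that $(T_S-k)/\sqrt{2k}$ has the same limiting behaviour as $T_\chi\sqrt{{\cal B}_{0k}/(2k)}$. If ${\cal B}_{0k}/(2k)\to c^2$ with $0<c<\infty$, the first assertion and Slutsky's theorem give that $(T_S-k)/\sqrt{2k}$ converges in distribution to $N(0,c^2)$, so the normal-approximation test has limiting rejection probability $\bar\Phi(z_{1-\alpha}/c)$ under $H_0$, and this equals $\alpha=\bar\Phi(z_{1-\alpha})$ precisely when $c=1$; in particular the test does attain size $\alpha$ when ${\cal B}_{0k}/(2k)\to 1$. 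For the general statement I would argue contrapositively: if ${\cal B}_{0k}/(2k)\not\to 1$, pass to a subsequence along which ${\cal B}_{0k}/(2k)\to c^2\in[0,\infty]$ with $c\neq 1$; along it the limiting size is $\bar\Phi(z_{1-\alpha}/c)\neq\alpha$ for $0<c<\infty$, is $\frac{1}{2}\neq\alpha$ for $c=\infty$, and is $0\neq\alpha$ for $c=0$, so the test is not asymptotically of size $\alpha$. To pin down when this occurs I would record the exact identity ${\cal B}_{0k}/(2k)=1-\frac{3}{k}\sum_{i=1}^k\frac{1}{n_i}+\frac{1}{2k}\sum_{i=1}^k\frac{1}{n_i\bar\pi(1-\bar\pi)}$, which exceeds $1$ whenever $\bar\pi(1-\bar\pi)<\frac{1}{6}$ and stays bounded away from $1$ in the sparse regime where $\bar\pi(1-\bar\pi)$ is small relative to the $n_i$'s, so the naive normal approximation is anti-conservative there.

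The one step that is not routine is the replacement of $\bar\pi$ by $\hat{\bar\pi}$, namely establishing $T_\chi=T_0+o_p(1)$ under $H_0$: this is delicate because it hinges on the \emph{rate} at which $\hat{\bar\pi}$ approaches $\bar\pi$, which enters through both the quadratic form ${\cal T}_S$ and the normalizer (whose $1/\{n_i\bar\pi(1-\bar\pi)\}$ terms are sensitive when $\bar\pi$ is small), not merely on convergence of a fixed functional; this is exactly the point the paper proves separately, and I would invoke it under the hypotheses of Theorem~\ref{thm:modTS}. Everything else is an application of Slutsky's theorem, the continuous mapping theorem, and a routine subsequence argument.
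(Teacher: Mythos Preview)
Your proposal follows essentially the same line as the paper's proof: reduce to $T_0\to N(0,1)$ under $H_0$ via Theorem~\ref{thm:modTS} (where $\mu_k=0$, $\sigma_k=1$), replace $\bar\pi$ by $\hat{\bar\pi}$ to pass from $T_0$ to $T_\chi$, and then read off the second assertion from the identity $(T_S-k)/\sqrt{2k}=\sqrt{\hat{\cal B}_{0k}/(2k)}\,T_\chi$. Two small differences are worth noting. First, the paper does not cite the ratio consistency $\hat{\bar\pi}/\bar\pi\to1$ as previously established; it proves it inside this corollary by the direct $L^2$ computation $E(\hat{\bar\pi}/\bar\pi-1)^2=(1-\pi)/(N\pi)\to0$ (using $N\pi\to\infty$). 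Second, the paper handles the passage from $T_0$ to $T_\chi$ in one stroke, writing $T_\chi-T=(\sqrt{{\cal B}_{0k}/\hat{\cal B}_{0k}}-1)T=o_p(1)O_p(1)$, which in effect treats $T_S$ and ${\cal T}_S$ as interchangeable once the denominator has been swapped; you are right to flag this replacement as the genuinely delicate step and to spell out the expansion $X_i-n_i\hat{\bar\pi}=(X_i-n_i\bar\pi)-n_i(\hat{\bar\pi}-\bar\pi)$, since the paper's displayed identity is literally correct only if $T_S-{\cal T}_S$ is absorbed into the $o_p$ term. Your subsequence argument for the second assertion, covering $c\in\{0,\infty\}$ as well as $c\in(0,\infty)\setminus\{1\}$, is also more thorough than the paper's one-line remark; the paper simply observes that $\frac{{\cal T}_S-k}{\sqrt{2k}}=\sqrt{\hat{\cal B}_{0k}/(2k)}\,T_\chi$ fails to be asymptotically $N(0,1)$ unless ${\cal B}_{0k}/(2k)\to1$.
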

\begin{proof}
We first show that $\hat {\bar \pi}/\bar \pi \rightarrow 1$ in probability.
Under $H_0$,  $\pi_i \equiv \pi$, we have $\sum_{i=1}^k n_i \hat \pi_i \sim Binomial(N,\pi)$. Using $ \sum_{i=1}^k n_i \pi_i = N\pi \rightarrow \infty$ under $H_0$, we have
\begin{eqnarray*}
E \left(\frac{\hat {\bar \pi}}{\bar \pi} -1 \right)^2 = \frac{1-\pi}{N \pi} \leq \frac{1}{N\pi} \rightarrow 0
\end{eqnarray*}
leading to  $\hat {\bar \pi}/\bar \pi \rightarrow 1$ in probability.
From this, we have $ \frac{\hat {\cal B}_{0k}}{{\cal B}_{0k} }
\rightarrow 1 $ in probability under $H_0$.
Furthermore, under $H_0$, since we have $ \frac{{\cal B}_{0k}}{{\cal B}_k}=1$ and $E({\cal T}_S)=k$, we obtain
$T_{\chi} - T = ( \sqrt{\frac{{\cal B}_{0k}}{ \hat {\cal B}_{0k} }} -1   )T  = o_p(1) O_p(1) = o_p(1)$ which means
$T_{\chi}$ and $T$ are asymptotically equivalent under the $H_0$.
Since $P_{H_0}(T > z_{1-\alpha} ) - \bar \Phi (z_{1-\alpha})  \rightarrow 0 $,
we have $P_{H_0}(T_{\chi} > z_{1-\alpha} ) - \alpha  \rightarrow 0$ which means
$T_{\chi}$  is the asymptotically size $\alpha$ test.
On the other hand, it is obvious that  $ \frac{{\cal T}_S -k}{\sqrt{2k}}$ doesn't have an asymptotic standard normality unless
${{\cal B}_{0k}}/(2k) \rightarrow 1$
 since  $  \frac{{\cal T}_S-k}{\sqrt{2k}}  =  \sqrt{\frac{\hat {\cal B}_{0k}}{2k}}   T_{\chi}$ under the $H_0$.
\end{proof}

\bigskip
Under $H_0$,  since ${\cal B}_{0k} = 2k + (\frac{1}{ \bar \pi(1-\bar \pi)} -6 ) \sum_{i=1}^k \frac{1}{n_i}$,
we expect $\frac{{\cal B}_{0k}}{2k}$ to converge to 1 when
$ (\frac{1}{\pi(1-\pi)} -6 ) \sum_{i=1}^k \frac{1}{n_i} =o(k)$ where $\pi_i= \bar \pi \equiv  \pi $ under $H_0$. This may happen
when  $\pi$ is bounded away from 0 and 1 and $n_i$s are large.
If  all $n_i$s are bounded by some constant, say $C$, and
     $|\frac{1}{\pi(1-\pi)} -6| \geq \delta >0$ (this can happen when $\pi <\epsilon_1$  or $\pi> 1-\epsilon_2$ for some $\epsilon_1>0$ and $\epsilon_2>0$),
then $\frac{{\cal B}_k}{2k}$ does not converge to 1.
 Even for $n_i$s are large,  if $\pi  \rightarrow 0$ fast enough, then $\frac{{\cal B}_{0k}}{2k}$ does not converge to 1.
For example, if $\pi=1/k$ and $n_i=k$ as $k \rightarrow \infty$,  then $\frac{{\cal B}_{0k}}{2k} \rightarrow 3/2$ which leads to
$\frac{{\cal T}_S-k}{\sqrt{2k}} \rightarrow N(0, \frac{3}{2})$ in distribution.
This implies that $P(\frac{{\cal T}_S -k}{\sqrt{2k}}  > z_{1-\alpha}) \rightarrow   1-\Phi (  \sqrt{\frac{2}{3}}  z_{1-\alpha} ) > \alpha$, so
the  test obtains a larger asymptotic size than a given nominal level.
To summarize, if either $\pi$ is small or $n_i$s are small,
we may not expect an accurate approximation to $\frac{{\cal T}_S-k}{\sqrt{2k}}$ based on normal approximation, so
the sparse binary data with small $n_i$s  and a large number of groups ($k$) needs to be handled more carefully.

\subsection{New Tests}
In addition to the modified Cochran's test $T_{\chi}$, we also propose new tests designed for sparse data when $k$ is large.
Similar to the asymptotic normality of $T_{\chi}$, it will be justified that
our proposed tests have the asymptotic normality  when $k \rightarrow \infty$ although $n_i$s are not required to increase.
Towards this end, we proceed as follows.
Let $||\mathb{\pi} - \mathb{\bar \pi}||^2_{{\bf n}} = \sum_{i=1}^k n_i (\pi_i - \bar \pi)^2$ which is
weighted $l_2$ distance from $\mathb{\pi}=(\pi_1,\pi_2,\ldots, \pi_k)$ to $\mathb{\bar \pi} = (\bar \pi, \bar \pi, \ldots, \bar \pi)$
where ${\bf n}=(n_1,\ldots, n_k)$.
The proposed test is based on measuring the $||\mathb{\pi} - \mathb{\bar \pi}||^2_{{\bf n}}$.
Since this is unknown, one needs to estimate the    $||\mathb{\pi} - \mathb{\bar \pi}||^2_{{\bf n}}$. One typical estimator
is a plug-in estimator such as $||\hat{\mathb{\pi}} - {\mathb{\hat {\bar \pi}}}||_{{\bf n}}$, however this
estimator may have a significant bias.
To illustrate this, note that
%
\begin{eqnarray*}
E||\hat{\mathb{\pi}} - {\mathb{\hat{ \bar\pi}}}||^2_{{\bf n}}  &=& \sumk \vari + \sumk \frac{n_i \vari}{N} - \frac{2}{N} \sumk n_i \vari + \sumk n_i (\pi_i - \bar \pi)^2 \\
 &=& \sumk c_i \vari + ||\mathb{\pi} - \mathb{\bar \pi}||^2_{{\bf n}}
\end{eqnarray*}
where $c_i =(1 - \frac{n_i}{N})$. This shows that $||\hat{\mathb{\pi}} - {\mathb{\hat{ \bar\pi}}}||^2_{{\bf n}}$
is an overestimate of $||\mathb{\pi} - \mathb{\bar \pi}||^2_{{\bf n}}$ by $\sumk c_i \vari$ which needs to be corrected.
Using $E\left[\frac{n_i}{n_i-1} \hatvari \right] =  \vari$ for $\hat \pi_i = \frac{x_i}{n_i}$,
we define $d_i=\frac{n_i c_i}{n_i-1}$ and
\begin{eqnarray}
T  = \sum_{i=1}^n n_i (\hat \pi_i - \hat {\bar \pi})^2 - \sumk d_i \hatvari  \equiv   ||\hat{\mathb{\pi}} - {\mathb{\hat{ \bar\pi}}}||^2_{{\bf n}} - \sumk
d_i \hatvari
\label{eqn:T}
\end{eqnarray}
which is an unbiased estimator of $||\mathb{\pi} - \mathb{\bar \pi}||^2_{{\bf n}}$.
This implies $E(T)= ||\mathb{\pi} - \mathb{\bar \pi}||^2_{{\bf n}} \geq 0$ and "=" holds only when $H_0$ is true.
Therefore, it is natural to consider large values of $T$ as an evidence supporting $H_1$, and we thus propose
a one-sided (upper) rejection region based on $T$ for testing $H_0$.
Our proposed test statistics are based on $T$ of which the asymptotic distribution is normal distribution under some conditions.

We  derive the asymptotic normality of a standardized version of $T$ under some regularity conditions.
Let us decompose $T$  into two components, say $T_1$ and $T_2$:
\begin{eqnarray}
 T &=&  \sumk n_i(\hat \pi_i - \pi_i + \pi_i - \bar \pi  + \bar \pi - \hat{\bar \pi} )^2 - \sumk d_i \hatvari   \nonumber \\
&=&
\underbrace{\sumk \left\{ n_i (\hat \pi_i - \pi_i)^2 - d_i \hatvari  + 2 n_i (\hat \pi_i - \pi_i ) (\pi_i-\bar \pi) +n_i (\pi_i -\bar \pi)^2 \right\}}_{T_1} \\
&&- \underbrace{ N (\hat {\bar \pi} - \bar \pi )^2 }_{T2}   \label{eqn:T1T2}
 \end{eqnarray}
 where $T_1 \equiv \sumk T_{1i}$ for $T_{1i} = n_i (\hat \pi_i - \pi_i)^2 - d_i \hatvari +2 n_i (\hat \pi_i - \pi_i ) (\pi_i-\bar \pi)+n_i (\pi_i -\bar \pi)^2$.
%
%
%
%
%
%
To prove the asymptotic normality of the proposed test, we need some preliminary results stated below in Lemmas \ref{lemma:1}, \ref{lemma:varianceT1} and \ref{lemma:vart1}, and
show the ratio consistency of proposed estimators of $Var(T_1)$ in Lemma \ref{lemma:ratio}.

\begin{lemma}
Let $\theta_i= \vari$.  When  $X_i \sim Binomial (n_i, \pi_i)$ and $\hat \pi_i = \frac{X_i}{n_i}$,  we have
\begin{eqnarray*}
E[(\hat \pi_i - \pi_i)^3] & =& \frac{(1-2\pi_i)\theta_i}{n_i^2},~~  E[(\hat \pi_i - \pi_i)^4] = \frac{3\theta_i^2}{n_i^2} + \frac{(1-6\theta_i)\theta_i}{n_i^3}\\
E[\hatvari] &=& \frac{n_i-1}{n_i} \theta_i,  \\
\pi_i^l &=& E\left[\frac{n_i^l}{\prod_{j=0}^{l-1} \left(n_i -j \right)} \prod_{j=0}^{l-1} \left(\hat \pi_i -\frac{j}{n_i} \right) \right]  ,
~~~~\mbox{for $n_i \geq l$ and $l=1,2,3,4$}.
\end{eqnarray*}
\label{lemma:1}
\end{lemma}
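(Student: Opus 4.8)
The plan is to compute each displayed moment by standard binomial‑moment bookkeeping, using factorial moments to avoid brute force. First I would recall that for $X_i \sim Binomial(n_i,\pi_i)$ the falling factorial moments are $E\big[X_i(X_i-1)\cdots(X_i-l+1)\big] = n_i(n_i-1)\cdots(n_i-l+1)\,\pi_i^{\,l}$. Dividing both sides by $n_i^{\,l}$ and writing $\hat\pi_i = X_i/n_i$ gives exactly
\begin{eqnarray*}
E\left[\prod_{j=0}^{l-1}\left(\hat\pi_i - \tfrac{j}{n_i}\right)\right] = \frac{\prod_{j=0}^{l-1}(n_i-j)}{n_i^{\,l}}\;\pi_i^{\,l},
\end{eqnarray*}
which, after multiplying by $n_i^{\,l}/\prod_{j=0}^{l-1}(n_i-j)$, is the last identity of the lemma for $l=1,2,3,4$ (and this is where the hypothesis $n_i \geq l$ is needed so the denominator does not vanish). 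This identity is really the engine for everything else: expanding the product on the left for $l=2,3,4$ expresses $E[\hat\pi_i^{\,2}]$, $E[\hat\pi_i^{\,3}]$, $E[\hat\pi_i^{\,4}]$ as explicit combinations of $\pi_i, \pi_i^2, \pi_i^3, \pi_i^4$ and powers of $1/n_i$.

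Next I would derive the central moments. For $E[(\hat\pi_i-\pi_i)^3]$ and $E[(\hat\pi_i-\pi_i)^4]$ I would expand the binomial power, substitute the expressions for $E[\hat\pi_i^{\,m}]$ obtained above, and collect terms. It is cleaner to work with $X_i$ directly: the third central moment of a binomial is $n_i\pi_i(1-\pi_i)(1-2\pi_i)$ and the fourth is $n_i\pi_i(1-\pi_i)\big[1+(3n_i-6)\pi_i(1-\pi_i)\big]$; dividing by $n_i^3$ and $n_i^4$ respectively and writing $\theta_i=\pi_i(1-\pi_i)$ yields
\begin{eqnarray*}
E[(\hat\pi_i-\pi_i)^3] = \frac{(1-2\pi_i)\theta_i}{n_i^2}, \qquad E[(\hat\pi_i-\pi_i)^4] = \frac{3\theta_i^2}{n_i^2} + \frac{(1-6\theta_i)\theta_i}{n_i^3},
\end{eqnarray*}
which matches the stated formulas. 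Finally, for $E[\hatvari] = E[\hat\pi_i(1-\hat\pi_i)] = E[\hat\pi_i] - E[\hat\pi_i^{\,2}]$, I would use $E[\hat\pi_i]=\pi_i$ and $E[\hat\pi_i^{\,2}] = \pi_i^2 + \theta_i/n_i$ (the $l=2$ case of the identity, rearranged) to get $E[\hatvari] = \pi_i - \pi_i^2 - \theta_i/n_i = \theta_i(1 - 1/n_i) = \frac{n_i-1}{n_i}\theta_i$.

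There is no real obstacle here — the lemma is a catalogue of elementary binomial moment identities. The only points requiring a little care are bookkeeping accuracy in the fourth‑moment expansion (it is easy to drop a term when converting between raw, factorial, and central moments) and stating the range restriction $n_i \geq l$ so that the normalizing constant $n_i^{\,l}/\prod_{j=0}^{l-1}(n_i-j)$ in the last identity is well defined. I would present the falling‑factorial identity first, then read off the remaining formulas as corollaries, so that the proof is short and self‑contained.
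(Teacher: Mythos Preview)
Your proposal is correct and follows essentially the same approach as the paper: the paper's proof simply states that the first three identities ``are easily derived by some computations'' and obtains the last from the factorial-moment identity $E[X_i(X_i-1)\cdots(X_i-l+1)] = n_i(n_i-1)\cdots(n_i-l+1)\pi_i^{\,l}$, which is exactly what you do. Your write-up just supplies the details the paper omits.
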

\begin{proof}
   The first three results are easily derived by some computations.
For the last result, note that
when $X_i\sim Binomial(n_i,\pi_i)$,   $E[X_i(X_i-1)\cdots (X_i-l+1)] =  n_i(n_i-1)\cdots (n_i-l+1) \pi_i^l$.
Let $X=\sumk \sum_{j=1}^{n_i} X_{ij} \sim Binomia\l(N, \pi)$, then we have the above unbiased estimators under $H_0$ using
 $\hat \pi =  \frac{X}{N} = \frac{1}{N} \sumk n_i \hat \pi_i$.
 \end{proof}

%
\bigskip
We now derive the asymptotic null distribution of $\frac{T_1}{\sqrt{Var(T_1)}}$ and propose an unbiased estimator of ${Var(T_1)}$
which has the ratio consistency property.
We first compute $Var(T_1)$ and then propose an estimator $\widehat{Var(T_1)}$.

\begin{lemma}
\label{lemma:varianceT1}
The variance of $T_1$, $Var(T_1)$, is
\begin{eqnarray}
Var(T_1) =\sumk {\cal A}_{1i}  \theta_i^2 + \sumk {\cal A}_{2i} \theta_i + 4 \sum_{i=1}^k n_i(\pi_i - \bar \pi)^2 \theta_i
+\frac{4}{N}\sumk n_i(\pi_i - \bar \pi)(1-2\pi_i)\theta_i
\label{eqn:varianceT1}
\end{eqnarray}
where $ {\cal A}_{1i}= \left( 2-\frac{6}{n_i} - \frac{d_i^2}{n_i} + \frac{8d_i^2}{n_i^2}-\frac{6d_i^2}{n_i^3} + 12 d_i \frac{n_i-1}{n_i^2}  \right)$
and ${\cal A}_{2i}=  \frac{ n_i}{N^2}$
for $d_i = \frac{n_i}{n_i-1} \left(1-\frac{n_i}{N} \right)$ .
\end{lemma}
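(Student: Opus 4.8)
The plan is to use independence to turn the variance of a sum into a sum of variances, and then to evaluate each term directly from the moment formulas in Lemma \ref{lemma:1}. Since $\bar\pi$ is a fixed (unknown) parameter rather than a statistic, each summand
\begin{eqnarray*}
T_{1i}=n_i(\hat\pi_i-\pi_i)^2-d_i\hatvari+2n_i(\hat\pi_i-\pi_i)(\pi_i-\bar\pi)+n_i(\pi_i-\bar\pi)^2
\end{eqnarray*}
is a function of $X_i$ alone, so the $T_{1i}$ are mutually independent and $Var(T_1)=\sumk Var(T_{1i})$. It therefore suffices to compute a single $Var(T_{1i})$ and add over $i$.

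To do that, I would set $U_i=\hat\pi_i-\pi_i$, which has mean zero, and use the exact identity $\hatvari=\hat\pi_i(1-\hat\pi_i)=\theta_i+(1-2\pi_i)U_i-U_i^2$ to rewrite $T_{1i}$ as a quadratic polynomial in $U_i$,
\begin{eqnarray*}
T_{1i}=(n_i+d_i)U_i^2+\bigl(2n_i(\pi_i-\bar\pi)-d_i(1-2\pi_i)\bigr)U_i+\bigl(n_i(\pi_i-\bar\pi)^2-d_i\theta_i\bigr).
\end{eqnarray*}
Writing $a_i=n_i+d_i$ and $b_i=2n_i(\pi_i-\bar\pi)-d_i(1-2\pi_i)$, the additive constant does not affect the variance and $E[U_i]=0$, so
\begin{eqnarray*}
Var(T_{1i})=a_i^2\,Var(U_i^2)+2a_ib_i\,E[U_i^3]+b_i^2\,Var(U_i).
\end{eqnarray*}
I would then substitute the moments from Lemma \ref{lemma:1}: $Var(U_i)=\theta_i/n_i$, $E[U_i^3]=(1-2\pi_i)\theta_i/n_i^2$, and $Var(U_i^2)=E[U_i^4]-(E[U_i^2])^2=2\theta_i^2/n_i^2+(1-6\theta_i)\theta_i/n_i^3$.

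What remains is algebra. After expanding $a_i$ and $b_i$ and inserting those moments, every monomial gets sorted into one of the four structural shapes appearing in (\ref{eqn:varianceT1}) by means of the identities $(1-2\pi_i)^2=1-4\theta_i$ and $\pi_i(1-\pi_i)=\theta_i$: multiples of $\theta_i^2$, of $\theta_i$, of $n_i(\pi_i-\bar\pi)^2\theta_i$, and of $n_i(\pi_i-\bar\pi)(1-2\pi_i)\theta_i$. Collecting the coefficients of the first two shapes gives ${\cal A}_{1i}$ and ${\cal A}_{2i}$; in particular the pure-$\theta_i$ coefficient turns out to be a perfect square, which simplifies to $n_i/N^2$ upon using $d_i\frac{n_i-1}{n_i}=1-\frac{n_i}{N}$. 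The same identity forces the $(\pi_i-\bar\pi)$-bearing contributions coming out of $b_i^2Var(U_i)$ and $2a_ib_iE[U_i^3]$ to recombine into exactly the coefficients $4$ and $4/N$.

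The main obstacle is the bookkeeping in this last step, not any conceptual point: the $\theta_i^2$- and $\theta_i$-coefficients each collect contributions from all three of $a_i^2Var(U_i^2)$, $2a_ib_iE[U_i^3]$ and $b_i^2Var(U_i)$ --- notably, the $-4\theta_i$ hidden inside $(1-2\pi_i)^2$ feeds extra $\theta_i^2$ terms back from the latter two pieces --- so one must take care not to drop or double-count terms before performing the $d_i$-simplifications. No limiting arguments are needed here; the ratio consistency of the estimated variance and the asymptotic normality are handled separately in the subsequent lemmas.
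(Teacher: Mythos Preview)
Your proposal is correct and reaches the same formula, but you organize the computation differently from the paper. The paper keeps the three random pieces $n_i(\hat\pi_i-\pi_i)^2$, $-d_i\hatvari$, and $2n_i(\hat\pi_i-\pi_i)(\pi_i-\bar\pi)$ separate and computes six quantities: the three individual variances $Var[(\hat\pi_i-\pi_i)^2]$, $Var[\hatvari]$, $Var[\hat\pi_i-\pi_i]$ and the three pairwise covariances among them, before collecting coefficients. Your route first absorbs $\hatvari$ via the exact identity $\hatvari=\theta_i+(1-2\pi_i)U_i-U_i^2$, collapsing $T_{1i}$ to a quadratic $a_iU_i^2+b_iU_i+\text{const}$ in the single centered variable $U_i$, so that only three moment inputs ($Var(U_i)$, $E[U_i^3]$, $Var(U_i^2)$) are needed. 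This buys a shorter moment table and makes the simplification $d_i\frac{n_i-1}{n_i}=1-\frac{n_i}{N}$ appear just once when reading off the $(\pi_i-\bar\pi)(1-2\pi_i)\theta_i$ coefficient, whereas the paper has to invoke it in several covariance terms. The trade-off is that in your version the separation between the $\theta_i$ and $\theta_i^2$ buckets is slightly more entangled, since the $(1-2\pi_i)^2=1-4\theta_i$ factor spreads $b_i^2Var(U_i)$ and $2a_ib_iE[U_i^3]$ across both buckets; the paper's term-by-term covariance list keeps those contributions more visibly segregated. Either bookkeeping scheme yields ${\cal A}_{1i}$, ${\cal A}_{2i}=n_i/N^2$, and the coefficients $4$ and $4/N$ without any approximation.
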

\begin{proof}
See Appendix.
\end{proof}

Under the $H_0$ ( $\pi_i = \pi$ for all $1\leq i \leq k$),
 the third and fourth terms including $\pi_i-\bar \pi$ in (\ref{eqn:varianceT1})
 are 0 and therefore we obtain the $Var(T_1)$ under $H_0$ as follows;
\begin{eqnarray}
Var_{H_0}(T_1) &\equiv & {\cal V}_1 = \sum_{i=1}^k  \left\{  {\cal A}_{1i} \theta_i^2 +  {\cal A}_{2i} \theta_i  \right\}  \label{eqn:estvar1}  \\
&=& {\cal V}_{1*}  =   (\pi(1-\pi))^2 \sum_{i=1}^k {\cal A}_{1i}+ \pi(1-\pi) \sum_{i=1}^k {\cal A}_{2i} . \label{eqn:estvar2}
\end{eqnarray}

${\cal V}_1$ in (\ref{eqn:estvar1}) and ${\cal V}_{1*}$ in (\ref{eqn:estvar2})
are equivalent under the $H_0$, however
the estimators may be different depending on whether $\theta_i$s are estimated individually from $x_i$
or the common value $\pi$ is estimated in ${\cal V}_{1*}$ by the pooled estimator  $\hat \pi$.
 We shall consider consider these two approaches for estimating ${\cal V}_1$ and ${\cal V}_{1*}$.

\bigskip
First, we demonstrate the estimator for ${\cal V}_1$ in (\ref{eqn:estvar1}).
$ {\cal V}_{1i}  \equiv {\cal A}_{1i} \theta_i^2 + {\cal A}_{2i} \theta_i$
is a 4th degree polynomial in $\pi_i$,
in other words,
${\cal V}_{1i} = a_{1i} \pi_i + a_{2i} \pi_i^2 + a_{3i} \pi_3^3 + a_{14} \pi_i^4$ where
$a_{ij}$'s depend only on $N$ and $n_i$.
As an estimator of ${\cal V}_1 = \sumk (a_{1i} \pi_i + a_{2i} \pi_i^2 + a_{3i} \pi_i^3 + a_{4i} \pi_i^4)$, we consider
unbiased estimators of $\pi_i$, $\pi_i^2$, $\pi_i^3$ and $\pi_i^4$.
Let $\eta_{li}= \pi_{i}^l$, $l=1,2,3,4$, then unbiased estimators of
$\eta_{li}$, say $\hat \eta_{li}$, are obtained directly from Lemma \ref{lemma:1}, leading to the first estimator of ${\cal V}_{1}$,
as
\begin{eqnarray} \hat{\cal{V}}_1 = \sumk \sum_{l=1}^{4} a_{li} \hat \eta_{li}
\label{eqn:Var1}
\end{eqnarray}
where $ \hat \eta_{li} = \frac{n_i^l}{\prod_{j=1}^{l-1} (n_i-j)} \prod_{j=0}^{l-1} \left(\hat \pi_i - \frac{j}{n_i} \right)$ for
$l=1,2,3,4$ from Lemma \ref{lemma:1} and
\begin{eqnarray*}
a_{1i}= {\cal A}_{2i},~~
a_{2i} =  {\cal A}_{1i}-{\cal A}_{2i},~~
a_{3i} = -2{\cal A}_{1i},~~
a_{4i} &=&  {\cal A}_{1i}.
\end{eqnarray*}

The second estimator is  based on estimating  ${\cal V}_{1*}$ in (\ref{eqn:estvar2}).
Since all $\pi_i=\pi$ under $H_0$,
we can write ${\cal V}_{1*}= \sum_{i=1}^{k} \sum_{l=1}^4 a_{li} \pi_i^l
= \sumk \sum_{l=1}^4 a_{li} \pi^l$, and use an unbiased estimator of $\pi^l$
 using  $\sum_{i=1}^k x_i \sim Binomial(N, \pi)$ from  Lemma \ref{lemma:1}.
 This leads to the estimator of ${\cal V}_{1*}$ under $H_0$ which is
\begin{eqnarray}
\hat{{\cal V}}_{1*} =  \sumk \sum_{l=1}^4 a_{li} \hat \eta_{l}.
\label{eqn:Var2}
\end{eqnarray}
where $\hat \eta_{l} = \frac{N^l}{\prod_{j=0}^l(N-j)} \prod_{j=0}^{l-1} \left(\hat \pi - \frac{j}{N} \right)$
and $\hat \pi = \frac{1}{N} \sumk n_i \hat \pi_i$, as used earlier.

\begin{remark}
Note that $\hat {\cal V}_1$ is an unbiased estimator of ${\cal V}_1$ regardless of $H_0$ and $H_1$.
On the other hand, $\hat {\cal V}_{1*}$ is an unbiased estimator of ${\cal V}_{1*}$
only under the $H_0$ since we use the binomial distribution of the pooled data $\sum_{i=1}^k x_i$ and use the Lemma \ref{lemma:1}.
\end{remark}

For sequences of $a_n (>0)$ and $b_n(>0)$,   let us define  $a_n \asymp b_n $ if  $ 0< \liminf \frac{a_n}{b_n}  \leq \limsup \frac{a_n}{b_n}  < \infty $.
The following lemmas will be used in the asymptotic normality of the proposed test.

\begin{lemma}  Suppose  $n_i \geq 2$ for $1\leq i \leq k$. Then,
\begin{enumerate}
\item  we have ${\cal V}_1 \asymp  \sum_{i=1}^k \theta_i^2  +  \frac{1}{N^2}  \sum_{i=1}^k  n_i \theta_i$.
In particular, if $0<c\leq \pi_i \leq 1-c <1$ for all $i$ and some constant $c$,  we have ${\cal V}_1\asymp k$.
\item we have
\begin{eqnarray}
  \sum_{i=1}^k {\cal A}_{1i} \theta_i^2  \leq  Var(T_1) \leq K( {\cal V}_1  +   ||{\mathb \pi} - \bar {\mathb \pi} ||_{{\bf n} \theta}^2)
  \end{eqnarray}
 for some constant $K>0$
 where  $||{\mathb \pi} - \bar {\mathb \pi} ||_{{\bf n} \theta}^2 = \sum_{i=1}^k n_i (\pi_i - \bar \pi)^2 \theta_i$.
If   $ |\pi_i - \bar \pi| \geq \frac{1+\epsilon}{N}$ for some $\epsilon >0$ and $1\leq i\leq k$, we have
\begin{eqnarray}
Var(T_1) \asymp {\cal V}_1 + ||{\mathb \pi} - \bar {\mathb \pi} ||_{{\bf n} \theta}^2. \label{eqn:asympVar}
\end{eqnarray}
\end{enumerate}
\label{lemma:vart1}
\end{lemma}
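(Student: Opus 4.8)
The statement has two parts; I prove each starting from the closed form of $Var(T_1)$ in Lemma~\ref{lemma:varianceT1}.

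For part~1 the plan is to show that the coefficient ${\cal A}_{1i}$ is bounded above and below by positive absolute constants, uniformly over all admissible $(n_i,N)$ with $n_i\ge 2$. Since every group has size at least $2$ and $k\ge 2$, we have $N-n_i=\sum_{j\ne i}n_j\ge 2$, so
\begin{eqnarray*}
0<\frac{2n_i}{(n_i-1)N}\le d_i=\frac{n_i}{n_i-1}\left(1-\frac{n_i}{N}\right)\le\frac{n_i}{n_i-1}\le 2 .
\end{eqnarray*}
Inserting these bounds into the explicit polynomial for ${\cal A}_{1i}$ gives at once a uniform upper bound and ${\cal A}_{1i}\to 2$ as $n_i\to\infty$; for the lower bound one checks directly, treating ${\cal A}_{1i}$ as a quadratic in $d_i$ for each of the finitely many small values of $n_i$, that ${\cal A}_{1i}\ge c_1>0$. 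Since ${\cal A}_{2i}=n_i/N^2$ exactly, it follows that ${\cal V}_1=\sum_i({\cal A}_{1i}\theta_i^2+{\cal A}_{2i}\theta_i)\asymp\sum_i\theta_i^2+\frac{1}{N^2}\sum_i n_i\theta_i$. If in addition $0<c\le\pi_i\le 1-c$ then $\theta_i\asymp 1$, so $\sum_i\theta_i^2\asymp k$ while $\frac{1}{N^2}\sum_i n_i\theta_i\asymp\frac{1}{N}\le\frac1k\le k$, whence ${\cal V}_1\asymp k$.

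For part~2, regroup Lemma~\ref{lemma:varianceT1} as $Var(T_1)={\cal V}_1+4\,||{\mathb \pi}-\bar{\mathb \pi}||_{{\bf n}\theta}^2+R$ with $R=\frac4N\sum_i n_i(\pi_i-\bar\pi)(1-2\pi_i)\theta_i$; the only term needing control is $R$. Using $|1-2\pi_i|\le 1$ together with $2ab\le a^2+b^2$ for $a=2\sqrt{n_i}\,|\pi_i-\bar\pi|\sqrt{\theta_i}$ and $b=\sqrt{n_i\theta_i}/N$ gives, termwise,
\begin{eqnarray*}
\frac4N\,n_i|\pi_i-\bar\pi|\theta_i\le 4n_i(\pi_i-\bar\pi)^2\theta_i+\frac{n_i}{N^2}\theta_i=4n_i(\pi_i-\bar\pi)^2\theta_i+{\cal A}_{2i}\theta_i ,
\end{eqnarray*}
so $|R|\le 4\,||{\mathb \pi}-\bar{\mathb \pi}||_{{\bf n}\theta}^2+\sum_i{\cal A}_{2i}\theta_i$. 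Hence on the low side $Var(T_1)=\sum_i{\cal A}_{1i}\theta_i^2+\left(\sum_i{\cal A}_{2i}\theta_i+4\,||{\mathb \pi}-\bar{\mathb \pi}||_{{\bf n}\theta}^2+R\right)\ge\sum_i{\cal A}_{1i}\theta_i^2$ because the bracket is $\ge 0$, and on the high side $Var(T_1)\le\sum_i{\cal A}_{1i}\theta_i^2+2\sum_i{\cal A}_{2i}\theta_i+8\,||{\mathb \pi}-\bar{\mathb \pi}||_{{\bf n}\theta}^2\le 8\left({\cal V}_1+||{\mathb \pi}-\bar{\mathb \pi}||_{{\bf n}\theta}^2\right)$, i.e., $K=8$. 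For the refinement~(\ref{eqn:asympVar}), the hypothesis $|\pi_i-\bar\pi|\ge\frac{1+\epsilon}{N}$ gives $\frac1N\le\frac{|\pi_i-\bar\pi|}{1+\epsilon}$, hence $\frac4N n_i|\pi_i-\bar\pi|\theta_i\le\frac{4}{1+\epsilon}n_i(\pi_i-\bar\pi)^2\theta_i$ and therefore $R\ge-\frac{4}{1+\epsilon}||{\mathb \pi}-\bar{\mathb \pi}||_{{\bf n}\theta}^2$, so $Var(T_1)\ge{\cal V}_1+\frac{4\epsilon}{1+\epsilon}||{\mathb \pi}-\bar{\mathb \pi}||_{{\bf n}\theta}^2$; combining with the upper bound yields~(\ref{eqn:asympVar}).

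The only genuine obstacle is the uniform two-sided control of ${\cal A}_{1i}$ in part~1: it is a rational function of $n_i$ and $N$ with real cancellation at small $n_i$, so positivity of the lower constant $c_1$ must be verified with some care, and it does use $N-n_i\ge 2$ (that there is more than one group of size $\ge 2$). Everything after that is routine bookkeeping with Lemma~\ref{lemma:varianceT1} and the single elementary inequality above.
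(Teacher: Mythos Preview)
Your argument is correct. Part~1 follows the same idea as the paper---show that ${\cal A}_{1i}$ is bounded above and below by positive absolute constants---though the paper is more explicit: it rewrites ${\cal A}_{1i}=2+\frac{6}{n_i}-\frac{12}{N}+\frac{d_i^2}{n_i}\bigl(-1+\frac{8}{n_i}-\frac{6}{n_i^2}\bigr)$ and bounds the parenthetical factor directly, whereas you leave the lower bound as a check ``for the finitely many small values of $n_i$''. That is acceptable but the paper's algebraic simplification makes the constant visible without casework.

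Part~2 is where you genuinely diverge. The paper splits the index set into ${\cal B}=\{i:|\pi_i-\bar\pi|\ge\frac{1+\epsilon}{N}\}$ and its complement, then on ${\cal B}^c$ completes the square in $G_i=n_i(\pi_i-\bar\pi)^2+\frac{n_i}{N}(\pi_i-\bar\pi)(1-2\pi_i)$ to obtain $G_i\ge -\frac{(1-2\pi_i)^2 n_i}{4N^2}$, which after using $1-(1-2\pi_i)^2=4\theta_i$ absorbs the deficit into the ${\cal A}_{2i}\theta_i$ term. Your single application of $2ab\le a^2+b^2$ with $a=2\sqrt{n_i}\,|\pi_i-\bar\pi|\sqrt{\theta_i}$ and $b=\sqrt{n_i\theta_i}/N$ achieves exactly the same absorption in one line, without the partition, and gives the same lower bound $\sum_i{\cal A}_{1i}\theta_i^2$ and an explicit $K=8$. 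The paper's decomposition buys nothing extra for the general inequality; its only real use is in the final $\asymp$ claim, where on ${\cal B}$ the gap $|\pi_i-\bar\pi|\ge\frac{1+\epsilon}{N}$ yields the sharper one-sided bound $4{\cal G}_{n1}\ge\frac{4\epsilon}{1+\epsilon}\sum_{i\in{\cal B}}n_i(\pi_i-\bar\pi)^2\theta_i$---precisely the step you recover directly from $\frac1N\le\frac{|\pi_i-\bar\pi|}{1+\epsilon}$. Your route is shorter and loses nothing.
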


\begin{proof}
See Appendix.
\end{proof}


We provide another lemma which plays a crucial role in the proof of the main result.
As mentioned, we have two types of variances such as  ${\cal V}_1$ in (\ref{eqn:estvar1}) and ${\cal V}_{1*}$ in (\ref{eqn:estvar2})
and their estimators $\widehat{{\cal V}_1}$ and
$\widehat{{\cal V}}_{1*}$.
For  $T_1$ in (\ref{eqn:T1T2}), we consider two types of standard deviations based on  $Var(T_1)$ and $Var(T_1)_*$.

The following lemma provides upper bounds of $n^4 E(\hat \pi - \pi)^8$  and  $E(\hat \pi (1-\hat \pi))^4$
which are needed in our proof for our mail results.

\begin{lemma}
 If $X \sim Binomial(n,\pi)$, $\hat \pi = \frac{X}{n}$ and $\hat \eta_l$ is the unbiased estimator of $\pi^l$ defined in Lemma \ref{lemma:1}, then
 we have, for $\theta \equiv \pi(1-\pi)$,
\begin{eqnarray}
n^4 E(\hat \pi - \pi)^8 &\leq& C \min \left\{  \theta^4, \frac{\theta}{n} \right \} \nonumber \\
E(\hat \pi (1-\hat \pi))^4 &\leq & C'\min\left\{ \theta^4, \frac{\theta}{n^3} \right \} \nonumber\\
E \hat \pi^l &=&  \pi^l + O \left(\frac{\pi}{n^{l-1}} + \frac{\pi^{l-1}}{n}  \right)~~\mbox{for $l \geq 2$} \label{eqn:hatpipowerk} \\
E (\hat \pi^l - \pi^l)^2 &=&  O\left(\frac{\pi^{2l-1}}{n} + \frac{\pi}{n^{2l-1}} \right)~~\mbox{for $l\geq 2$} \nonumber \\
E (\hat \eta_l - \pi^l)^2 &=&  O\left(\frac{\pi^{2l-1}}{n} + \frac{\pi}{n^{2l-1}} \right)~~\mbox{for $l\geq 2$} \nonumber \\
\end{eqnarray}
where $C$ and $C'$ are universal constants which do not depend on $\pi$ and $n$.
\label{lemma:lemma4}
\end{lemma}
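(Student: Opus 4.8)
The plan is to treat every quantity in the statement as a polynomial moment of the binomial count $X$ and to control such moments with two devices. The first is the i.i.d.\ representation $X-n\pi=\sum_{j=1}^{n}(Y_j-\pi)$, $Y_1,\dots,Y_n\iid\mathrm{Bernoulli}(\pi)$, together with the elementary fact that every absolute central moment of $\mathrm{Bernoulli}(\pi)$ of order $r\ge1$ is at most $2\theta$, since $E|Y_j-\pi|^{r}=\pi(1-\pi)^{r}+(1-\pi)\pi^{r}\le2\pi(1-\pi)=2\theta$. The second is the factorial-moment identity $E[X(X-1)\cdots(X-s+1)]=n(n-1)\cdots(n-s+1)\pi^{s}$ from Lemma~\ref{lemma:1}, which will reduce the variance of $\hat\eta_l$ to finite algebra.

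The core step, to be established first, is that for each fixed $m\in\{1,2,3,4\}$,
\[
E(\hat\pi-\pi)^{2m}=n^{-2m}\,E\Big(\sum_{j=1}^{n}(Y_j-\pi)\Big)^{2m}\le C_m\sum_{b=1}^{m}n^{b-2m}\theta^{b},
\]
and likewise $E|\hat\pi-\pi|^{2m-1}\le C_m\sum_{b=1}^{m-1}n^{b-(2m-1)}\theta^{b}$ for odd orders, which is needed when $l=3$. Expanding the $2m$th power and grouping the index tuples $(j_1,\dots,j_{2m})$ by the set partition they induce, the mean-zero property kills every partition with a singleton block; a partition into $b$ blocks admits at most $n(n-1)\cdots(n-b+1)\le n^{b}$ choices of distinct indices, and its contribution is a product of $b$ central Bernoulli moments, each at most $2\theta$. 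Since a no-singleton partition of $\{1,\dots,2m\}$ has at most $m$ blocks, summing over the finitely many partition shapes gives the bound. Putting $m=4$ and separating the ranges $n\theta\lesssim1$ and $n\theta\gtrsim1$ — in which, respectively, the lowest-order ($b=1$) and the highest-order ($b=4$) terms in $n$ dominate — yields the first bound; for the second, write $\hat\pi(1-\hat\pi)=\theta+(1-2\pi)(\hat\pi-\pi)-(\hat\pi-\pi)^{2}=:\theta+W$, note $|W|\le2|\hat\pi-\pi|$, expand $(\theta+W)^{4}$, and insert the moment estimate term by term (using $EW=-\theta/n$), again separating the two ranges.

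For the last three estimates, the binomial expansion $\hat\pi^{l}=\sum_{r=0}^{l}{l \choose r}\pi^{l-r}(\hat\pi-\pi)^{r}$ has vanishing $r=1$ term, so $E\hat\pi^{l}=\pi^{l}+\sum_{r\ge2}{l \choose r}\pi^{l-r}E(\hat\pi-\pi)^{r}$; by the moment estimate the surviving corrections are dominated by the $r=2$ term, which is $O(\pi^{l-2}\theta/n)=O(\pi^{l-1}/n)$, and by the $r=l$ term, which in the sparse range is $O(\theta/n^{l-1})=O(\pi/n^{l-1})$, giving (\ref{eqn:hatpipowerk}); squaring this and adding $Var(\hat\pi^{l})$ (estimated by the same expansion) gives the bound on $E(\hat\pi^{l}-\pi^{l})^{2}$. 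Finally $\hat\eta_l=X(X-1)\cdots(X-l+1)/\bigl[n(n-1)\cdots(n-l+1)\bigr]$ is unbiased for $\pi^{l}$, so $E(\hat\eta_l-\pi^{l})^{2}=Var(\hat\eta_l)$; expanding $[X(X-1)\cdots(X-l+1)]^{2}$ in the falling-factorial basis (nonnegative integer coefficients, with the $(X)_{l}$-coefficient equal to $l!$ and the $(X)_{2l}$-coefficient equal to $1$), applying the identity of Lemma~\ref{lemma:1} to each term, and dividing by $\bigl[n(n-1)\cdots(n-l+1)\bigr]^{2}\asymp n^{2l}$, one finds $Var(\hat\eta_l)$ is an explicit polynomial in $\pi$ and $1/n$ whose leading-in-$\pi$ term is of order $\pi^{l}/n^{l}$ and whose leading-in-$n$ term is of order $\pi^{2l-1}/n$ (after the cancellation $n(n-1)\cdots(n-2l+1)-[n(n-1)\cdots(n-l+1)]^{2}=O(n^{2l-1})$), and a short check shows every term is $\le C(\pi^{2l-1}/n+\pi/n^{2l-1})$.

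The main obstacle is bookkeeping, not ideas. In the core estimate one must track, for each partition shape, the precise power of $n$ that survives after the falling factorials $n(n-1)\cdots(n-b+1)$ are expanded, because in the sparse range it is exactly the lower-order-in-$n$ pieces that dominate, and there are genuine cancellations — of $n(n-1)\cdots(n-l+1)$ against $n^{l}$, and among several factorial-moment terms in $Var(\hat\eta_l)$ — that have to be shown not to spoil the stated rates. Carrying the constants through the cases $l\in\{1,2,3,4\}$ and the two ranges to match the exact forms quoted is the only tedious part; the rest is routine moment algebra.
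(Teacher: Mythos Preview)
Your approach is correct and, for the first inequality, essentially identical to the paper's: both write $X-n\pi=\sum_j(Y_j-\pi)$, expand the eighth power, kill all partitions with a singleton block, bound each surviving central Bernoulli moment by a constant times $\theta$, and conclude that $E(X-n\pi)^8\le C\max\{n\theta,(n\theta)^4\}$. (Note that what both you and the paper actually prove is the bound with $\max$, not the $\min$ printed in the lemma; the $\max$ form is what is used downstream, e.g.\ in the proofs of Theorems~\ref{thm:modTS} and~\ref{thm:normality}, so the discrepancy is a typo in the statement rather than a gap in either argument.)

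From the second inequality onward your route diverges from the paper's, and the comparison is worth recording. For $E(\hat\pi(1-\hat\pi))^4$ the paper does not expand around $\theta$; instead it bounds $E\hat\pi^4$ and $E(1-\hat\pi)^4$ separately, uses $E[\hat\pi^4(1-\hat\pi)^4]\le\min\{E\hat\pi^4,E(1-\hat\pi)^4\}$, and exploits the $\pi\leftrightarrow1-\pi$ symmetry to pick the smaller side. Your Taylor-type expansion $\hat\pi(1-\hat\pi)=\theta+W$ is more computational but self-contained, feeding directly off the central-moment bounds you have already established. For $E\hat\pi^l$ and $E(\hat\pi^l-\pi^l)^2$ the paper passes through the falling-factorial decomposition $X^l=\sum_m b_m(X)_m$ and the identity $E(X)_m=(n)_m\pi^m$, whereas you use the binomial expansion $\hat\pi^l=\sum_r\binom{l}{r}\pi^{l-r}(\hat\pi-\pi)^r$ and the moment estimates; both reach the same $O(\pi^{l-1}/n+\pi/n^{l-1})$ correction. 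For $E(\hat\eta_l-\pi^l)^2$ the paper inserts $\hat\pi^l$ as a pivot and bounds $E(\hat\eta_l-\hat\pi^l)^2$ termwise, while you compute $\mathrm{Var}(\hat\eta_l)$ directly by expanding $[(X)_l]^2$ in the falling-factorial basis; your route is algebraically heavier (the cancellation $(n)_{2l}-[(n)_l]^2=O(n^{2l-1})$ must be checked) but avoids the triangle-inequality loss. Either path is fine for the fixed range $l\le4$ used in the paper.
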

\begin{proof} See Appendix.  \end{proof}

\begin{remark}
 It should be noted that the bounds in Lemma \ref{lemma:lemma4}
depend on the behavior of $\theta =\pi(1-\pi)$ and the sample size $n$ in binomial distribution.
In the classical asymptotic theory for a fixed value of $\pi$,
if $\pi$ is bounded away from $0$ and $1$ and $n$ is large,
then $\theta^4$ dominates $\frac{\theta}{n}$ (or $\frac{\theta}{n^3}$). However,
$n$ is not large and $\pi$ is close to 0 or 1, then $\frac{\theta}{n}$ (or $\frac{\theta}{n^3}$)
is a tighter bound of   $n^4 E(\hat \pi - \pi)^8$ (or $E(\hat \pi (1-\hat \pi))^4$) than $\theta^4$.
\end{remark}

The following lemma shows that   $\hat {\cal V}_1$ and $\hat {\cal V}_{1*}$ have the ratio consistency under
some conditions.
\begin{lemma}
For $\tilde \theta = \bar \pi(1-\bar \pi)$, $\bar \pi =  \frac{1}{N} \sumk n_i \pi_i$ and $\pi_i \leq \delta <1$ for some $0<\delta<1$,
we have the followings;
\begin{enumerate}
\item if  $\frac{\sum_{i=1}^k \left(\frac{\theta_i^3}{n_i}+ \frac{\theta_i}{n_i^3}\right)}
{(\sum_{i=1}^k (\theta_i^2  + \frac{1}{N^2} \frac{\theta_i}{n_i} ) )^2} \rightarrow 0$ as $k\rightarrow 0$,
$\frac{\hat{\cal V}_1}{{\cal V}_1} \rightarrow 1$ in probability.
\item if  $\frac{ (\tilde \theta)^3 \sumk \frac{1}{n_i}  + \tilde \theta \sum_{i=1}^k \frac{1}{n_i^3}}
{ \left( k (\tilde \theta)^2+  \frac{\tilde \theta}{N^2}  \sum_{i=1}^k \frac{1}{n_i} \right)^2  }  \rightarrow 0 $,    $\frac{\hat{\cal V}_{1*}}{{\cal V}_{1*}} \rightarrow 1$ in probability.
\end{enumerate}
\label{lemma:ratio} \end{lemma}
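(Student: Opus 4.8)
The plan is to prove both ratio-consistency claims by the second-moment (Chebyshev) method, exploiting that $\hat{\cal V}_1$ is an \emph{exactly} unbiased estimator of ${\cal V}_1$, and that, under $H_0$, $\hat{\cal V}_{1*}$ is an exactly unbiased estimator of ${\cal V}_{1*}$ (the Remark after (\ref{eqn:Var2})). It then suffices to show $Var(\hat{\cal V}_1)/{\cal V}_1^2\to 0$ for part 1, and, under $H_0$, $Var(\hat{\cal V}_{1*})/{\cal V}_{1*}^2\to 0$ for part 2, since $P(|\hat{\cal V}/{\cal V}-1|>\epsilon)\le Var(\hat{\cal V})/(\epsilon^2{\cal V}^2)$ in each case. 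So the work is entirely in bounding these variances and comparing them to the lower bounds for ${\cal V}_1$ and ${\cal V}_{1*}$ supplied by Lemma \ref{lemma:vart1}.

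For part 1, write $\hat{\cal V}_1=\sum_{i=1}^k\hat{\cal V}_{1i}$ with $\hat{\cal V}_{1i}=\sum_{l=1}^4 a_{li}\hat\eta_{li}$; by independence of $X_1,\dots,X_k$, $Var(\hat{\cal V}_1)=\sum_{i=1}^k Var(\hat{\cal V}_{1i})$. First I would record that the coefficients are benign: $d_i=\frac{n_i}{n_i-1}(1-\frac{n_i}{N})\in(0,2]$, so ${\cal A}_{1i}=O(1)$ and ${\cal A}_{2i}=n_i/N^2$, whence $a_{1i}=n_i/N^2$ while $|a_{2i}|,|a_{3i}|,|a_{4i}|$ are bounded by a universal constant. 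By Cauchy--Schwarz, $Var(\hat{\cal V}_{1i})\le 4\sum_{l=1}^4 a_{li}^2 Var(\hat\eta_{li})$. The $l=1$ term contributes $\frac{n_i^2}{N^4}\cdot\frac{\theta_i}{n_i}=\frac{n_i\theta_i}{N^4}\le\frac{\theta_i}{n_i^3}$ (using $n_i\le N$); for $l=2,3,4$, Lemma \ref{lemma:lemma4} gives $Var(\hat\eta_{li})=E(\hat\eta_{li}-\pi_i^l)^2=O(\pi_i^{2l-1}/n_i+\pi_i/n_i^{2l-1})$, and $\pi_i\le\delta$ lets us replace $\pi_i$ by $\theta_i$ up to a $\delta$-dependent constant, after which $\theta_i\le1/4$ makes the $l=2$ term dominate. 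Thus $Var(\hat{\cal V}_{1i})\le K(\theta_i^3/n_i+\theta_i/n_i^3)$, and summing, $Var(\hat{\cal V}_1)\le K\sum_{i=1}^k(\theta_i^3/n_i+\theta_i/n_i^3)$. Finally Lemma \ref{lemma:vart1}(1) gives ${\cal V}_1\asymp\sum_i\theta_i^2+\frac{1}{N^2}\sum_i n_i\theta_i\ge c\big(\sum_i\theta_i^2+\frac{1}{N^2}\sum_i\frac{\theta_i}{n_i}\big)$ since $\frac{1}{n_i}\le n_i$, so $Var(\hat{\cal V}_1)/{\cal V}_1^2$ is a constant multiple of the quantity assumed to vanish.

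For part 2, under $H_0$ we have $\pi_i\equiv\pi$, $\tilde\theta=\pi(1-\pi)$, and the pooled count $\sum_i x_i\sim Binomial(N,\pi)$, so $\hat{\cal V}_{1*}=\sum_{l=1}^4 c_l\hat\eta_l$ with $c_l=\sum_{i=1}^k a_{li}$: here $c_1=\sum_i n_i/N^2=1/N$ and, since ${\cal A}_{1i}=O(1)$, $|c_2|,|c_3|,|c_4|$ are $O(k)$. Again by Cauchy--Schwarz, $Var(\hat{\cal V}_{1*})\le 4\sum_{l=1}^4 c_l^2 Var(\hat\eta_l)$, where $Var(\hat\eta_1)=\tilde\theta/N$ and, by Lemma \ref{lemma:lemma4} with sample size $N$ (again using $\pi\le\delta$), $Var(\hat\eta_l)=O(\tilde\theta^{2l-1}/N+\tilde\theta/N^{2l-1})$ for $l\ge2$. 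The $l=1$ piece contributes $\tilde\theta/N^3$ and the $l\ge2$ pieces are dominated by the $l=2$ term (as $\tilde\theta\le1/4$), giving $Var(\hat{\cal V}_{1*})\le K'(k^2\tilde\theta^3/N+k^2\tilde\theta/N^3)$. The AM--HM inequality gives $\sum_i\frac{1}{n_i}\ge k^2/N$ and $\sum_i\frac{1}{n_i^3}\ge\frac{1}{k^2}(\sum_i\frac{1}{n_i})^3\ge k^4/N^3\ge k^2/N^3$, so $Var(\hat{\cal V}_{1*})\le K'(\tilde\theta^3\sum_i\frac{1}{n_i}+\tilde\theta\sum_i\frac{1}{n_i^3})$; and by Lemma \ref{lemma:vart1}(1) with ${\cal V}_{1*}={\cal V}_1$ under $H_0$, ${\cal V}_{1*}\asymp k\tilde\theta^2+\tilde\theta/N\ge c\big(k\tilde\theta^2+\frac{\tilde\theta}{N^2}\sum_i\frac{1}{n_i}\big)$ since $\sum_i\frac{1}{n_i}\le k\le N$. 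Dividing, $Var(\hat{\cal V}_{1*})/{\cal V}_{1*}^2$ is a constant times the assumed-to-vanish quantity, and Chebyshev finishes the proof.

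The main obstacle I anticipate is the variance bookkeeping for $Var(\hat{\cal V}_{1i})$ and $Var(\hat{\cal V}_{1*})$: each is the variance of a degree-four polynomial in an empirical proportion and so a priori involves moments up to order eight, and one must verify that the crude Cauchy--Schwarz reduction to $\sum_l a_{li}^2 Var(\hat\eta_{li})$ still produces the sharp orders $\theta_i^3/n_i+\theta_i/n_i^3$ --- precisely where the assumption $\pi_i\le\delta$ is used to trade powers $\pi_i^m$ for $\theta_i^m$ up to constants depending only on $\delta$. Once that is done, matching the bounds to the exact denominators in the statement uses only $n_i\ge2$, $k\le N$, and the AM--HM inequality.
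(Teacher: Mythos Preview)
Your proposal is correct and follows essentially the same strategy as the paper: establish ratio consistency via Chebyshev by bounding $Var(\hat{\cal V})/{\cal V}^2$, using Lemma~\ref{lemma:lemma4} for the numerator and Lemma~\ref{lemma:vart1} for the denominator, with the condition $\pi_i\le\delta$ serving to convert powers of $\pi_i$ into powers of $\theta_i$.

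A few small remarks on the comparison. The paper actually writes $Var(\hat{\cal V}_1)=\sum_i\sum_l a_{li}^2 E(\hat\eta_{li}-\eta_{li})^2$ as an \emph{equality}, justifying only that the $i\neq i'$ cross terms vanish; your use of Cauchy--Schwarz to get the inequality $Var(\hat{\cal V}_{1i})\le 4\sum_l a_{li}^2 Var(\hat\eta_{li})$ is the cleaner way to handle the same-$i$, different-$l$ covariances and yields the same order. For part~2 the paper simply writes ``similarly'' and records the result; your explicit AM--HM steps ($\sum_i 1/n_i\ge k^2/N$ and the power-mean bound for $\sum_i 1/n_i^3$) are exactly what is needed to match the variance bound $O(k^2\tilde\theta^3/N+k^2\tilde\theta/N^3)$ to the numerator in the hypothesis, and the observation $\sum_i 1/n_i\le k\le N$ is what aligns the Lemma~\ref{lemma:vart1} lower bound with the stated denominator. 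Your restriction of part~2 to $H_0$ is also in line with the paper, since ${\cal V}_{1*}$ and the unbiasedness of $\hat{\cal V}_{1*}$ are only defined under $H_0$.
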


\begin{proof}
See Appendix.
\end{proof}

\begin{remark}
\label{remark:dense}
Lemma \ref{lemma:ratio} includes the condition
$\pi_i \leq \delta <1$ which avoids dense case that the majority of observations are 1.
Since our study focuses on sparse case, it is realistic to exclude  $\pi_i$s which are very close to 1.
When data are dense, the homogeneity test of $\pi_i$ can be done through  testing $\pi_i^* \equiv 1-\pi$ and
$x_{ij}^*=1-x_{ij}$.
\end{remark}

\begin{remark}
\label{remark:mleV}
As an estimator of $\pi_i^l$ or $\pi^l$ for $l=1,2,3,4$,
we used unbiased estimators of them.
Instead of unbiased estimators, we may consider simply MLE, $ (\hat \pi_i)^l$ or $ (\hat {\pi})^l$ for $l=1,2,3,4$.
For the first type estimator  $\hat {\cal V}_1$, when sample sizes $n_i$ are not large, unbiased estimators and MLE are different. Especially,
if all $n_i$s are small and $k$ is large, then such small differences are accumulated so the behavior of estimators for variance
are expected to be significantly different. This will be demonstrated in our simulation studies.
On the other hand, for  $\hat {\cal V}_{1*}$, unbiased estimators and MLEs for $ (\pi)^l$ under $H_0$
behave almost same way even for small $n_i$ since
the total sample size $N=\sum_{i=1}^k n_i$ is large due to large $k$.
The estimator of ${\cal V}_{1}$ based on $\hat \pi_i$, namely $\hat {\cal V}_{1}^{mle}$ has the larger variance
\begin{eqnarray*}
E(\hat {\cal V}_1^{mle} - {\cal V}_1)^2 \asymp \sumk \left(\frac{\theta_i^3}{n_i} + \frac{\theta_i}{n_i^3}\right) + \sum_{i\neq j} \frac{\theta_i \theta_j}{n_in_j}
\end{eqnarray*}
while  $E(\hat {\cal V}_1 - {\cal V}_1)^2 \asymp   \sumk \left(\frac{\theta_i^3}{n_i} + \frac{\theta_i}{n_i^3}\right)$.
Similarly, we can also define ${\hat {\cal V}}_{1*}^{mle}$ based on the $\hat {\pi} = \frac{\sum_{i=1}^k x_{i}}{N}$.
Even with the given condition $ \sumk \left(\frac{\theta_i^3}{n_i}+\frac{\theta_i}{n_i^3}\right)/(\sumk \theta_i^2 + \frac{1}{N^2} \sumk \frac{\theta_i}{n_i})^2 =o(1)$,
$\hat {\cal V}_{1}^{mle}$ may not be a ratio consistent estimator due to the additional variation from biased estimation of $\pi_i^l$ for $l=2,3,4$.    We present simulation studies comparing tests with $\hat {\cal V}_1$ and $\hat {\cal V}_1^{mle}$ later.
\end{remark}

In Lemma \ref{lemma:ratio}, we present ratio consistency of  $\hat {\cal V}_1$ and $\hat {\cal V}_{1*}$
under some conditions.
Both conditions avoids too small $\pi_i$s compared to $n_i$s among $k$ groups.  It is obvious that the conditions are satisfied
if all $\pi_i$s are uniformly bounded away from 0 and 1. In general, however, the conditions allow small $\pi_i$s which may converge to zero at some rate satisfying presented conditions on $\theta_i$s in lemmas and theorems.

Under $H_0$,  we have two different estimators,  $\hat {\cal V}_1$ and $\hat {\cal V}_{1*}$
and their corresponding test statistics, namely  $T_{new1}$ and $T_{new2}$ respectively:
\begin{eqnarray*}
T_{new1} = \frac{T}{\sqrt{\hat{\cal V}_1 }},~~~~T_{new2} = \frac{T}{\sqrt{\hat{\cal V}_{1*}   }}.
\end{eqnarray*}

The following theorem shows that the proposed tests, $T_{new1}$ and $T_{new2}$, are asymptotically size $\alpha$ tests.

\begin{theorem} Under $H_0 : \pi_i \equiv \pi$ for all $1\leq i \leq k$,
if the condition in Lemma \ref{lemma:ratio} holds and
$\frac{ \sum_{i=1}^k \frac{1}{n_i}  }{k\theta^3} \rightarrow 0$
for $\theta = \pi(1-\pi)$ under $H_0$,
then  $T_{new1}\rightarrow N(0,1)$ in distribution and   $T_{new2}  \rightarrow N(0,1)$ in distribution as $k \rightarrow \infty$.
\label{cor:size}
\end{theorem}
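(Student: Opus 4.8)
The plan is to reduce, via Slutsky's theorem and the ratio consistency of $\hat{\cal V}_1$ and $\hat{\cal V}_{1*}$ established in Lemma~\ref{lemma:ratio}, to proving a central limit theorem for $T/\sqrt{{\cal V}_1}$, and then to treat $T=T_1-T_2$ by showing $T_2$ is asymptotically negligible and applying a Lyapunov central limit theorem to the independent sum $T_1=\sumk T_{1i}$.

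First I would record that under $H_0$ the cross terms and the $(\pi_i-\bar\pi)^2$ terms in (\ref{eqn:T1T2}) and (\ref{eqn:varianceT1}) vanish, so $T_{1i}=n_i(\hat\pi_i-\pi)^2-d_i\hat\pi_i(1-\hat\pi_i)$, the $T_{1i}$ are independent, and $Var(T_1)={\cal V}_1={\cal V}_{1*}$ by (\ref{eqn:estvar1})--(\ref{eqn:estvar2}). Since $\hat{\cal V}_1/{\cal V}_1\cp 1$ and $\hat{\cal V}_{1*}/{\cal V}_{1*}\cp 1$ by Lemma~\ref{lemma:ratio}, it is enough to show $T/\sqrt{{\cal V}_1}\cd N(0,1)$, because Slutsky then transfers the limit to $T_{new1}$ and $T_{new2}$. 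Next I would dispose of $T_2=N(\hat{\bar\pi}-\bar\pi)^2\ge 0$: under $H_0$, $\sumk n_i\hat\pi_i\sim Binomial(N,\pi)$, so $E(T_2)=N\,Var(\hat{\bar\pi})=\pi(1-\pi)=\theta$, while the first part of Lemma~\ref{lemma:vart1} with $\theta_i\equiv\theta$ gives ${\cal V}_1\asymp k\theta^2+\theta/N\ge c\,k\theta^2$; hence $E(T_2)/\sqrt{{\cal V}_1}=O(k^{-1/2})\to 0$ and $T_2/\sqrt{{\cal V}_1}\cp 0$ by Markov's inequality.

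The core step is $T_1/\sqrt{Var(T_1)}\cd N(0,1)$, which I would obtain from the Lyapunov condition with exponent four: $\sumk E(T_{1i}-E T_{1i})^4/Var(T_1)^2\to 0$. Each $T_{1i}-ET_{1i}$ is a quadratic polynomial in $\hat\pi_i-\pi$, so expanding and using $d_i\le 2$ (valid since $n_i\ge 2$) together with Lemma~\ref{lemma:1} and the bounds $n_i^4E(\hat\pi_i-\pi)^8\le C\min\{\theta^4,\theta/n_i\}$ and $E(\hat\pi_i(1-\hat\pi_i))^4\le C'\min\{\theta^4,\theta/n_i^3\}$ of Lemma~\ref{lemma:lemma4} yields $E(T_{1i}-ET_{1i})^4\le C(\theta/n_i+\theta^4)$. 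Summing and dividing by $Var(T_1)^2={\cal V}_1^2\ge c\,k^2\theta^4$ gives a Lyapunov ratio bounded by
\[
\frac{C\bigl(\theta\sumk n_i^{-1}+k\theta^4\bigr)}{c\,k^2\theta^4}
= \frac{C'}{k}+\frac{C'}{k}\cdot\frac{\sumk n_i^{-1}}{k\theta^3}\longrightarrow 0,
\]
where the last term vanishes by the hypothesis $\sumk n_i^{-1}/(k\theta^3)\to 0$. The Lyapunov central limit theorem for triangular arrays of independent summands then gives $T_1/\sqrt{{\cal V}_1}\cd N(0,1)$, and combining this with $T_2/\sqrt{{\cal V}_1}\cp 0$ and Slutsky completes the proof for both $T_{new1}$ and $T_{new2}$.

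The main obstacle I anticipate is the clean fourth-moment estimate $E(T_{1i}-ET_{1i})^4\le C(\theta/n_i+\theta^4)$: one must expand the quadratic, keep track of the bounded factors $d_i$, and, term by term, select the right branch of the two minima in Lemma~\ref{lemma:lemma4} (the $\theta/n_i$ and $\theta/n_i^3$ branches rather than the $\theta^4$ branch) so that the summed and normalized bound is controlled precisely by the stated condition $\sumk n_i^{-1}/(k\theta^3)\to 0$. Everything else — the two Slutsky reductions, the Markov bound on $T_2$, and invoking the Lyapunov theorem — is routine.
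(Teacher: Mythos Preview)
Your proposal is correct and follows essentially the same route as the paper: reduce via Slutsky and the ratio consistency of Lemma~\ref{lemma:ratio} to $T/\sqrt{{\cal V}_1}$, kill $T_2$ by a first-moment Markov bound using ${\cal V}_1\gtrsim k\theta^2$ from Lemma~\ref{lemma:vart1}, and verify the fourth-moment Lyapunov condition for $T_1=\sumk T_{1i}$ using the moment bounds of Lemma~\ref{lemma:lemma4}. Your handling of the centering in the Lyapunov step (writing $E(T_{1i}-ET_{1i})^4$ rather than $E(T_{1i}^4)$) is in fact slightly more careful than the paper's, since under $H_0$ one has $E(T_{1i})=n_i\theta/N\neq 0$, though this extra term is $O(k^{-1/2})$ after normalization and does not affect the argument.
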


\begin{proof}
See Appendix.
\end{proof}

\begin{remark}
The condition in Lemma \ref{lemma:ratio} under the $H_0$
is $\frac{\theta^3 \sumk \frac{1}{n_i} + \theta \sumk \frac{1}{n_i^3}}
{ \left(k\theta^2 + \frac{\theta}{N^2} \sum_{i=1}^k \frac{1}{n_i} \right)^2 } =o(1)$.
This condition includes a variety of situations such as small values of $\pi$
as well as small sample sizes. Furthermore, inhomogeneous sample sizes
are also included.
For example, when the sample sizes are bounded, we have
$\sumk \frac{1}{n_i} \asymp k$ and $\sumk \frac{1}{n_i^3} \asymp k$ leading to
  $\frac{\theta^3 \sumk \frac{1}{n_i} + \theta \sumk \frac{1}{n_i^3}}
{ \left(k\theta^2 + \frac{\theta}{N^2} \sum_{i=1}^k \frac{1}{n_i} \right)^2 }  \leq \frac{1}{k\theta^3}$ which converges to 0
when $k \theta^3 \rightarrow \infty$. This happens when $\pi = k^{\epsilon -1/3}$ for $0< \epsilon <1/3$
which is allowed to converge to 0.
Another case is that sample sizes are highly unbalanced.
For example, we have
$ n_i \asymp i^{\alpha}$ for $ \alpha >1$
which implies $\sum_{i=1}^{\infty} \frac{1}{n_i} < \infty$
and $\sum_{k=1}^{\infty} \frac{1}{n_i^3} <\infty$. Therefore the condition is
$  \frac{\theta^3 \sumk \frac{1}{n_i} + \theta \sumk \frac{1}{n_i^3}}
{ \left(k\theta^2 + \theta \sum_{i=1}^k \frac{1}{n_i} \right)^2 }
\asymp  \frac{ \theta^3  +  \theta  }{ (k \theta^2  +  \theta)^2 } \leq   \frac{ \theta^3 +  \theta  }{k^2 \theta^4 }
= \frac{1}{k^2 \theta} + \frac{1}{k^2 \theta^3} \rightarrow 0 $
 if $ \pi \asymp  k ^{\epsilon } $ for $ -\frac{2}{3} < \epsilon < 0$.
 In this case, the sample size $n_i$ diverges as $i \rightarrow \infty$, so
 sample sizes are highly unbalanced.  For the asymptotic normality, additional condition  $\sumk \frac{1}{n_i}/(k\theta^3) \rightarrow 0$  in Theorem \ref{cor:size} is satisfied  for $ -\frac{1}{3} < \epsilon <0$.
 \end{remark}

From Theorem \ref{cor:size},  we reject the $H_0$ if
\begin{eqnarray*}
 T_{new1} (\mbox{or}~~ T_{new2})  > z_{1-\alpha}
 \end{eqnarray*}
where $z_{1-\alpha}$ is $(1-\alpha)$ quantile of a standard normal distribution.
As explained in section 2.2, note that the rejection region is one-sided
since  we have $E(T) \geq 0$ implying that large values of tests support the alternative hypothesis.

\bigskip
 Although they have the same asymptotic null distribution,  their power functions are different due to the different behavior of
$\hat {\cal V}_1$ and $\hat {\cal V}_{1*}$ under $H_1$.
In general, it is not necessary to have the asymptotic normality under the $H_1$, however
to compare the powers analytically, one may expect asymptotic power functions to be more specific.

\bigskip
The following lemma states the asymptotic normality of $T/\sqrt{{Var(T_1)}}$
where $Var(T_1)$ is in (\ref{eqn:varianceT1}) in Lemma \ref{lemma:varianceT1}.
In the following asymptotic results, it is worth mentioning that we put some conditions on $\theta_i$s so that
 they do not approach to 0 too fast.

\begin{theorem}
If $(i)$ $ |\pi_i -\bar \pi| \geq \frac{1+\epsilon}{N}$ for $1\leq i \leq k$, $(ii)$
$\frac{\sumk (\theta_i^4 + \frac{\theta_i}{n_i})}{\left(\sumk \theta_i^4 + \frac{1}{N^2} \sumk \frac{\theta_i}{N}  \right)^2   } \rightarrow 0$ and
$(iii)$ $\frac{ \max_i  (\pi_i - \bar \pi)^2 (n_i\theta_i +1)}
{  {\cal V}_1 + ||{\mathb \pi} - \bar {\mathb \pi} ||^2_{\theta {\bf n}}} \rightarrow 0$
where $||{\mathb \pi} - \bar {\mathb \pi} ||^2_{\theta{\bf n}}  = \sum_{i=1}^k n_i (\pi_i-\bar \pi)^2 \theta_i$, then
\begin{eqnarray*}
\frac{T -\sum_{i=1}^k n_i (\pi_i - \bar \pi)^2}{\sqrt{{Var(T_1)}}} \rightarrow N(0,1) ~~\mbox{in distribution}
\end{eqnarray*}
where $Var(T_1)$ is defined in (\ref{eqn:varianceT1}).
\label{thm:normality}
\end{theorem}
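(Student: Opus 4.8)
The strategy is to recenter $T$ about its mean, discard the subtracted piece $T_2$ as asymptotically negligible after standardization, and run a Lyapunov central limit theorem on the sum $T_1=\sumk T_{1i}$ of independent summands.

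\emph{Step 1: recentering.} Since, by construction (see the discussion following (\ref{eqn:T})), $T$ is unbiased for $\sumk n_i(\pi_i-\bar\pi)^2$, we have $T-\sumk n_i(\pi_i-\bar\pi)^2=T-E(T)$. Using the decomposition $T=T_1-T_2$ from (\ref{eqn:T1T2}) together with $E(\hatvari)=\frac{n_i-1}{n_i}\theta_i$ and $E(\hat\pi_i-\pi_i)^2=\theta_i/n_i$, a direct computation gives $E(T_{1i})=\frac{n_i}{N}\theta_i+n_i(\pi_i-\bar\pi)^2$ and $E(T_2)=\frac1N\sumk n_i\theta_i$, so the two $\frac1N\sumk n_i\theta_i$ contributions cancel, confirming $E(T)=\sumk n_i(\pi_i-\bar\pi)^2$, and
\[
\frac{T-\sumk n_i(\pi_i-\bar\pi)^2}{\sqrt{Var(T_1)}}=\frac{T_1-E(T_1)}{\sqrt{Var(T_1)}}-\frac{T_2-E(T_2)}{\sqrt{Var(T_1)}}.
\]
By Slutsky's theorem it then suffices to show the first term converges in distribution to $N(0,1)$ and the second converges to $0$ in probability.

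\emph{Step 2: $T_2$ is negligible.} Writing $\hat{\bar\pi}-\bar\pi=\frac1N\sumk(X_i-n_i\pi_i)$, expanding the square, and using independence of the $X_i$ and the fourth central moment of a binomial, one finds $Var(T_2)=\frac1{N^2}\big(2(\sumk n_i\theta_i)^2+\sumk n_i\theta_i(1-6\theta_i)\big)$, which is $O(1)$ because $\sumk n_i\theta_i\le N/4$. On the other hand, the denominator of the ratio in condition $(ii)$ cannot remain bounded, since its numerator is at least $\sumk\theta_i^4$, which is positive and nondecreasing in $k$; hence by Lemma \ref{lemma:vart1}(1) the quantity ${\cal V}_1$ diverges, and by condition $(i)$ together with (\ref{eqn:asympVar}) this forces $Var(T_1)\asymp{\cal V}_1+\sumk n_i(\pi_i-\bar\pi)^2\theta_i\to\infty$. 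Chebyshev's inequality then gives $(T_2-E(T_2))/\sqrt{Var(T_1)}\to 0$ in probability.

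\emph{Step 3: Lyapunov CLT for $T_1$.} Since each $T_{1i}$ depends only on $X_i$, the summands $T_{1i}-E(T_{1i})$ are independent. Substituting $\hat\pi_i(1-\hat\pi_i)=\theta_i+(1-2\pi_i)U_i-U_i^2$ with $U_i:=\hat\pi_i-\pi_i$ shows $T_{1i}$ is a quadratic in $U_i$, so after centering $T_{1i}-E(T_{1i})=(n_i+d_i)(U_i^2-\theta_i/n_i)+\beta_iU_i$ with $\beta_i:=2n_i(\pi_i-\bar\pi)-d_i(1-2\pi_i)$. I will verify Lyapunov's condition with exponent $4$, i.e.\ $\sumk E(T_{1i}-E(T_{1i}))^4/(Var(T_1))^2\to 0$. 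From $(a+b)^4\le 8(a^4+b^4)$, the moments in Lemma \ref{lemma:1}, the estimate $n_i^4E(\hat\pi_i-\pi_i)^8\le C\theta_i^4$ of Lemma \ref{lemma:lemma4}, and the uniform bound $d_i\le 2$ for $n_i\ge 2$ (so $n_i+d_i\le 2n_i$ and $\beta_i^4\le C(n_i^4(\pi_i-\bar\pi)^4+1)$), one obtains $(n_i+d_i)^4E(U_i^2-\theta_i/n_i)^4\le C(\theta_i^4+\theta_i/n_i)$ and $\beta_i^4EU_i^4\le C(\theta_i^4+\theta_i/n_i+(n_i(\pi_i-\bar\pi)^2\theta_i)^2+(\pi_i-\bar\pi)^2n_i(\pi_i-\bar\pi)^2\theta_i)$. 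Summing over $i$ and applying the elementary inequality $\sumk x_i^2\le(\max_i x_i)\sumk x_i$ to $x_i=n_i(\pi_i-\bar\pi)^2\theta_i$ (whose sum is $S:=\sumk n_i(\pi_i-\bar\pi)^2\theta_i$) yields $\sumk E(T_{1i}-E(T_{1i}))^4\le C\sumk(\theta_i^4+\theta_i/n_i)+C(\max_i(\pi_i-\bar\pi)^2(n_i\theta_i+1))\,S$. Dividing by $(Var(T_1))^2\asymp({\cal V}_1+S)^2$: the first term is at most a constant times the ratio in $(ii)$ (since ${\cal V}_1^2$ bounds that denominator from below up to a constant, by Lemma \ref{lemma:vart1}(1)) and so tends to $0$; the second is at most $C(\max_i(\pi_i-\bar\pi)^2(n_i\theta_i+1))/({\cal V}_1+S)\to 0$ by $(iii)$. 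Thus Lyapunov's condition holds, $(T_1-E(T_1))/\sqrt{Var(T_1)}\to N(0,1)$ in distribution, and Step 1 completes the proof.

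\emph{Main difficulty.} The substantive work is the bookkeeping in Step 3: one must split $E(T_{1i}-E(T_{1i}))^4$ into a ``null'' part uniformly bounded by $\theta_i^4+\theta_i/n_i$ and therefore absorbed by $(ii)$, and the pieces carrying factors of $(\pi_i-\bar\pi)$, which have to be organized through $\sumk x_i^2\le(\max_i x_i)\sumk x_i$ so that precisely condition $(iii)$ applies; keeping the $n_i$-dependent constants coming from $d_i=\frac{n_i}{n_i-1}(1-\frac{n_i}{N})$ and from Lemmas \ref{lemma:1} and \ref{lemma:lemma4} under control (the crude uniform bound $d_i\le 2$ for $n_i\ge 2$ is what keeps this clean) is the fiddly step. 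A secondary point, used in Step 2, is the observation that $(ii)$ by itself already forces $Var(T_1)\to\infty$, which is what makes the $O(1)$ quantity $T_2$ negligible after dividing by $\sqrt{Var(T_1)}$.
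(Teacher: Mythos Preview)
Your overall architecture—Lyapunov CLT for $T_1$ plus negligibility of the $T_2$ piece, assembled via Slutsky—is exactly what the paper does, and your Step~3 bound on $\sumk E(T_{1i}-E(T_{1i}))^4$ is a clean reorganization of the paper's estimate (the paper simply splits $G_i$ into its three summands and uses $(a+b+c)^4\le C(a^4+b^4+c^4)$, while you first rewrite $T_{1i}$ as a quadratic in $U_i$; both routes land on the same two pieces controlled by conditions $(ii)$ and $(iii)$).

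The gap is in Step~2. Your claim that ``$\sumk\theta_i^4$ is positive and nondecreasing in $k$, hence the denominator of $(ii)$ must diverge'' is not valid in the triangular-array framework of the paper: the $\theta_i=\theta_i^{(k)}$ are allowed to change (and shrink) with $k$, so $\sumk\theta_i^4$ can tend to~$0$. Worse, the conclusion you draw from it—that $\mathcal V_1\to\infty$ and hence $Var(T_1)\to\infty$—is simply false under the stated hypotheses. For instance, with $n_i\equiv k$, $\pi_i=k^{-1/2}\pm 2/N$ (alternating), one checks that conditions $(i)$–$(iii)$ all hold while $\mathcal V_1\to 1$ and $Var(T_1)\to 1$; so the bare bound $Var(T_2)=O(1)$ is not enough to conclude $(T_2-E(T_2))/\sqrt{Var(T_1)}\to 0$.

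The paper sidesteps this entirely by exploiting $T_2\ge 0$: it shows $E(T_2)/\sqrt{Var(T_1)}\to 0$ and then applies Markov's inequality. Concretely, $E(T_2)=\frac1N\sumk n_i\theta_i\le \max_i\theta_i$, while $\max_i\theta_i^4\le\sumk\theta_i^4\le$ (numerator of $(ii)$); together with $Var(T_1)\asymp\mathcal V_1+S$ from Lemma~\ref{lemma:vart1} under $(i)$, condition $(ii)$ gives $(\max_i\theta_i)^2/Var(T_1)\to 0$, hence $E(T_2)/\sqrt{Var(T_1)}\to 0$. If you prefer to keep your Chebyshev route, you can repair it by sharpening your variance bound to $Var(T_2)\le C(\max_i\theta_i)^2+O(1/N)$ (which follows from $\frac{2}{N^2}(\sumk n_i\theta_i)^2\le 2(\max_i\theta_i)^2$) and then invoking the same consequence of $(ii)$; but you cannot get there from ``$Var(T_2)=O(1)$'' alone.
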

\begin{proof}
See Appendix.
\end{proof}

Using Theorem \ref{thm:normality},  we obtain
the asymptotic power of the proposed tests. We state this in the following corollary.
\begin{corollary} Under the assumptions in Lemma \ref{lemma:ratio} and Theorem \ref{thm:normality},
the powers of $T_{new1}$ and $T_{new2}$ are
\begin{eqnarray*}
P(T_{new1} > z_{1-\alpha}) &-& \bar \Phi \left( \frac{\sqrt{{\cal V}_1}}{\sqrt{{Var(T_1)}}} z_{1-\alpha} - \frac{\sum_{i=1}^k n_i(\pi_i-\bar \pi)^2}{\sqrt{Var(T_1)}}  \right) \rightarrow 0
\end{eqnarray*}
and
\begin{eqnarray*}
P(T_{new2} > z_{1-\alpha}) &-& \bar \Phi \left( \frac{\sqrt{{\cal V}_{1*}}}{\sqrt{Var(T_1)}}  z_{1-\alpha}- \frac{\sum_{i=1}^k n_i(\pi_i-\bar \pi)^2}{\sqrt{Var(T_1)}}  \right) \rightarrow 0
\end{eqnarray*}
where  $\bar \Phi (x) = 1-\Phi (x) = P(Z > x)$ for a standard normal random variable $Z$  and  $Var(T_1)$  defined in (\ref{eqn:varianceT1}).
\label{cor:cor1}
\end{corollary}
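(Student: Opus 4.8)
The plan is to reduce the corollary to Theorem \ref{thm:normality}. Write $\Delta_k:=\sum_{i=1}^k n_i(\pi_i-\bar\pi)^2$ and $W_k:=\frac{T-\Delta_k}{\sqrt{Var(T_1)}}$, so that Theorem \ref{thm:normality} asserts $W_k\cd N(0,1)$. The idea is to replace the unknown variances ${\cal V}_1$, ${\cal V}_{1*}$ by the estimators $\hat{\cal V}_1$, $\hat{\cal V}_{1*}$, which are ratio consistent by Lemma \ref{lemma:ratio}, and then carry the resulting $o_p(1)$ error through a Slutsky-type argument that must be made uniform because the threshold is deterministic but drifts with $k$.

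First I would record the elementary identity
\[
\{T_{new1}>z_{1-\alpha}\}=\left\{W_k>\frac{z_{1-\alpha}\sqrt{\hat{\cal V}_1}-\Delta_k}{\sqrt{Var(T_1)}}\right\},
\]
and the analogous one for $T_{new2}$ with $\hat{\cal V}_{1*}$ in place of $\hat{\cal V}_1$. By Lemma \ref{lemma:ratio}, $\sqrt{\hat{\cal V}_1}=\sqrt{{\cal V}_1}\,(1+o_p(1))$, so the right-hand threshold equals $c_k+z_{1-\alpha}\frac{\sqrt{{\cal V}_1}}{\sqrt{Var(T_1)}}\,o_p(1)$, where $c_k:=\frac{\sqrt{{\cal V}_1}}{\sqrt{Var(T_1)}}z_{1-\alpha}-\frac{\Delta_k}{\sqrt{Var(T_1)}}$ is exactly the argument of $\bar\Phi$ in the first display of the statement. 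Under hypothesis $(i)$ of Theorem \ref{thm:normality}, equation (\ref{eqn:asympVar}) of Lemma \ref{lemma:vart1} gives $Var(T_1)\asymp{\cal V}_1+||{\mathb \pi} - \bar {\mathb \pi} ||^2_{\theta {\bf n}}$, in particular $Var(T_1)\ge c\,{\cal V}_1$ for some $c>0$; hence $\sqrt{{\cal V}_1}/\sqrt{Var(T_1)}=O(1)$ and the perturbation of the threshold is genuinely $o_p(1)$. The same reasoning, with $\sqrt{{\cal V}_{1*}}/\sqrt{Var(T_1)}$ bounded under the stated hypotheses, handles $T_{new2}$.

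It remains to transfer the limit through this $o_p(1)$ perturbation. Fix $\epsilon>0$ and let $R_k=o_p(1)$ denote the perturbation; on the event $\{|R_k|\le\epsilon\}$ the threshold lies in $[c_k-\epsilon,c_k+\epsilon]$, so
\[
P(W_k>c_k+\epsilon)-P(|R_k|>\epsilon)\le P(T_{new1}>z_{1-\alpha})\le P(W_k>c_k-\epsilon)+P(|R_k|>\epsilon).
\]
Since $W_k\cd N(0,1)$ and the limit law is continuous, Polya's theorem gives $\sup_x|P(W_k\le x)-\Phi(x)|\to0$, whence $P(W_k>c_k\pm\epsilon)=\bar\Phi(c_k\pm\epsilon)+o(1)$ uniformly in the sequence $c_k$; combining this with $P(|R_k|>\epsilon)\to0$ and the Lipschitz bound $|\bar\Phi(c_k\pm\epsilon)-\bar\Phi(c_k)|\le\epsilon\,(2\pi)^{-1/2}$ yields $\limsup_k|P(T_{new1}>z_{1-\alpha})-\bar\Phi(c_k)|\le\epsilon\,(2\pi)^{-1/2}$, and letting $\epsilon\downarrow0$ gives the first display. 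Repeating the argument verbatim with $\hat{\cal V}_{1*}$ and ${\cal V}_{1*}$ gives the statement for $T_{new2}$. The only non-routine point is that $c_k$ drifts with $k$, so ordinary Slutsky does not literally apply: one needs the uniform (Polya) form of the convergence in Theorem \ref{thm:normality} together with the boundedness of $\sqrt{{\cal V}_1}/\sqrt{Var(T_1)}$ and $\sqrt{{\cal V}_{1*}}/\sqrt{Var(T_1)}$ from Lemma \ref{lemma:vart1}. I expect this bookkeeping, rather than any sharp estimate, to be the main obstacle.
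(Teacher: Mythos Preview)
Your proof is correct and is precisely the argument the paper leaves implicit: the corollary is stated without proof there, intended as an immediate consequence of the asymptotic normality in Theorem~\ref{thm:normality} together with the ratio consistency of $\hat{\cal V}_1$ and $\hat{\cal V}_{1*}$ from Lemma~\ref{lemma:ratio}. Your use of Polya's theorem to handle the drifting threshold $c_k$ is in fact more careful than anything spelled out in the paper.

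One small point to watch. For $T_{new1}$ your boundedness claim $\sqrt{{\cal V}_1}/\sqrt{Var(T_1)}=O(1)$ is cleanly delivered by (\ref{eqn:asympVar}) in Lemma~\ref{lemma:vart1}, since under hypothesis~(i) of Theorem~\ref{thm:normality} one has $Var(T_1)\asymp{\cal V}_1+||{\mathb \pi}-\bar{\mathb \pi}||_{{\bf n}\theta}^2\ge c\,{\cal V}_1$. For $T_{new2}$, however, the analogous bound $\sqrt{{\cal V}_{1*}}/\sqrt{Var(T_1)}=O(1)$ is \emph{not} supplied by Lemma~\ref{lemma:vart1}: that lemma controls $Var(T_1)$ only in terms of ${\cal V}_1$ and $||{\mathb \pi}-\bar{\mathb \pi}||_{{\bf n}\theta}^2$, and the paper itself exhibits regimes (see the proof of Theorem~\ref{thm:powercomparison}, cases (e)--(f)) in which ${\cal V}_{1*}/{\cal V}_1\to\infty$. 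Your sandwich argument needs the perturbation $z_{1-\alpha}\frac{\sqrt{{\cal V}_{1*}}}{\sqrt{Var(T_1)}}\bigl(\sqrt{\hat{\cal V}_{1*}/{\cal V}_{1*}}-1\bigr)$ to be $o_p(1)$, so if this ratio is unbounded you would have to argue separately that in such regimes both $P(T_{new2}>z_{1-\alpha})$ and $\bar\Phi(c_k^*)$ tend to the same limit (typically $0$ or $1$). This is a gap the paper glosses over as well, so your proposal is no worse than the original, but it is worth flagging if you want a fully rigorous write-up.
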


\subsection{Comparison of Powers}
In the previous section, we present the asymptotic power of  tests,  $T_{new1}$ and $T_{new2}$.
Currently, it doesn't look straightforward to tell one test
is uniformly better than the others.
However,  one may consider some specific scenario and compare different tests under those scenario
which may help to understand the properties of tests in a better way.
Asymptotic powers depend on the configurations of $(\pi_i's)$, $(n_i's)$ and $k$.
It is not possible to consider all configurations, however
what we want to show through simulations is that neither of $T_{new1}$ and $T_{new2}$ dominates the other.

Let $\beta(T)$ be the asymptotic power of a test statistic  $\lim_{k\rightarrow \infty} P(T >z_{1-\alpha})$  where $T$ is one of $T_{\chi}$, $T_{new1}$ and $T_{new2}$.
\begin{theorem}
\label{eqn:powercomparison}
\begin{enumerate}
\item If sample sizes $n_1=\ldots=n_k\equiv n$ and  $\max_{1\leq i \leq k} \pi_i < \frac{1}{2}-\frac{1}{\sqrt{3}}$,
then
\begin{eqnarray*}
\lim_{k\rightarrow \infty}(\beta(T_{new2}) - \beta(T_{new1}) ) \geq 0.
\end{eqnarray*}
If $n_i=n$  for all $1\leq i \leq k$  and $ n\bar \pi (1-\bar \pi) \rightarrow \infty$, then
\begin{eqnarray*}
\lim_{k \rightarrow \infty} (\beta({T_{new2}}) - \beta(T_{\chi}) ) &\geq& 0.
\end{eqnarray*}

\item Suppose $\pi_i =\pi=k^{-\gamma}$ for $1\leq i\leq k-1$ and $\pi_k = k^{-\gamma}+\delta$ for $0<\gamma <1$
as well as $n_i =n$ for $1\leq i\leq k-1$,  and $n_k =  [nk^{\alpha}]$ for $0<\alpha < 1$
where $[x]$ is the greatest integer which does not exceed $x$. Then, if $n \rightarrow \infty$,
\begin{enumerate}
\item for $\{(\alpha,\gamma) : 0<\alpha<1, 0<\gamma<1, 0<\alpha + \gamma <1,   0<\gamma \leq \frac{1}{2}\}$, then  $\lim_k (\beta(T_{new_1}) -\beta(T_{new2}))=0$.
\item for $\{ (\alpha, \gamma) :0<\alpha<1, 0<\gamma<1, \alpha + \gamma >1,  \alpha > \frac{1}{2} \} $, then $\lim_k (\beta(T_{new_1}) -\beta(T_{new2})) >0$.
\end{enumerate}

\item  Suppose $\pi_1 =  k^{-\gamma}+\delta$  and  $n_1 =n \rightarrow \infty$  and
$\pi_i = k^{-\gamma}$ and $n_i = [n k^{\alpha}]$ for $2 \leq i \leq n$.
For $0<\gamma <1$ and $0<\alpha <1$,
if $0<\gamma <1/2$ and $k^{1-\alpha-\gamma} =o(n)$, then
 \begin{eqnarray}
\lim_{k \rightarrow \infty} (\beta(T_{new_2}) -\beta(T_{new_1})) >0.
\end{eqnarray}

\end{enumerate}
\label{thm:powercomparison}
\end{theorem}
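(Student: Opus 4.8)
The plan is to read all three asymptotic powers off the Gaussian approximations already established and to reduce every comparison to a comparison of the \emph{standardizing variances}, since the noncentrality term is shared. By Corollary \ref{cor:cor1}, $\beta(T_{new1})=\lim_k \bar \Phi(a_{1k})$ and $\beta(T_{new2})=\lim_k \bar \Phi(a_{2k})$ with
\begin{eqnarray*}
a_{1k} &=& z_{1-\alpha}\sqrt{\frac{{\cal V}_1}{Var(T_1)}}-\frac{\sumk n_i(\pi_i-\bar \pi)^2}{\sqrt{Var(T_1)}},\\
a_{2k} &=& z_{1-\alpha}\sqrt{\frac{{\cal V}_{1*}}{Var(T_1)}}-\frac{\sumk n_i(\pi_i-\bar \pi)^2}{\sqrt{Var(T_1)}},
\end{eqnarray*}
and, by Theorem \ref{thm:modTS}, the power of $T_{\chi}$ is $\lim_k \bar \Phi(z_{1-\alpha}/\sigma_k-\mu_k)$ with $\sigma_k^2={\cal B}_k/{\cal B}_{0k}$ and $\mu_k=(E({\cal T}_S)-k)/\sqrt{{\cal B}_k}$. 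Because $\bar \Phi$ is strictly decreasing and the noncentrality part of $a_{1k}$ equals that of $a_{2k}$, for large $k$ the sign of $\beta(T_{new2})-\beta(T_{new1})$ is exactly the sign of ${\cal V}_1-{\cal V}_{1*}$ (here $z_{1-\alpha}>0$), and the comparison of $\beta(T_{new2})$ with $\beta(T_{\chi})$ reduces, after substituting the explicit expressions for ${\cal B}_k,{\cal B}_{0k},E({\cal T}_S)$, to comparing $\sqrt{{\cal V}_{1*}/Var(T_1)}$ with $1/\sigma_k$ and the two noncentralities. So the theorem becomes, for each prescribed configuration, the task of evaluating the leading orders of ${\cal V}_1$ in (\ref{eqn:estvar1}), ${\cal V}_{1*}$ in (\ref{eqn:estvar2}), $Var(T_1)$ in (\ref{eqn:varianceT1}) and $\sumk n_i(\pi_i-\bar \pi)^2$, and then reading off the inequality from monotonicity of $\bar \Phi$.

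For Part 1, $n_i\equiv n$ makes ${\cal A}_{1i}\equiv {\cal A}_1$, ${\cal A}_{2i}\equiv {\cal A}_2$ and $\bar \pi=k^{-1}\sumk \pi_i$, so ${\cal V}_1-{\cal V}_{1*}={\cal A}_1(\sumk \theta_i^2-k\bar \theta^2)+{\cal A}_2(\sumk \theta_i-k\bar \theta)$ with $\bar \theta=\bar \pi(1-\bar \pi)$. First I note that the regularity conditions inherited from Corollary \ref{cor:cor1} make the ${\cal A}_2$ term of smaller order than ${\cal A}_1\sumk \theta_i^2$. The decisive analytic input is that $g(x)=(x(1-x))^2$ has $g''(x)=2-12x+12x^2\ge 0$ precisely for $x\notin(\frac{1}{2}-\frac{1}{2\sqrt{3}},\frac{1}{2}+\frac{1}{2\sqrt{3}})$, so the stated bound on $\max_i\pi_i$ puts every $\pi_i$ in a range on which $g$ is convex; Jensen's inequality with equal weights then gives $\sumk \theta_i^2\ge k\bar \theta^2$, whence $\limsup_k {\cal V}_{1*}/{\cal V}_1\le 1$ and $\beta(T_{new2})-\beta(T_{new1})\ge 0$ for large $k$. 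For the comparison with $T_{\chi}$, $n\bar \theta\to\infty$ collapses ${\cal B}_{0k}=2k+(\frac{1}{\bar \theta}-6)\sumk\frac{1}{n_i}$ to $2k(1+o(1))$ and makes $\bar \theta^2{\cal B}_{0k}$ match ${\cal V}_{1*}$ and $\bar \theta^2{\cal B}_k$ match $Var(T_1)$ up to the contribution that the built-in bias correction $\sumk d_i\hatvari$ in $T$ removes from ${\cal T}_S$; a term-by-term comparison of the resulting arguments then yields $\beta(T_{new2})\ge\beta(T_{\chi})$ in this regime.

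For Parts 2 and 3 I would, in each $(\alpha,\gamma)$-regime, first locate the weighted mean. In Part 2 the outlier $(\pi_k,n_k)$ with $n_k=[nk^{\alpha}]$ moves $\bar \pi$ by $\Theta(\delta k^{\alpha-1})$, so $\bar \theta\asymp k^{-\gamma}$ when $\alpha+\gamma<1$ but $\bar \theta\asymp k^{\alpha-1}$ when $\alpha+\gamma>1$; in Part 3 the outlier carries the \emph{small} sample size $n$, so $\bar \pi\sim k^{-\gamma}$ for all the listed exponents. Then I compute leading orders from (\ref{eqn:estvar1})--(\ref{eqn:varianceT1}). In regime 2(a) one finds ${\cal V}_1\sim {\cal V}_{1*}\sim 2k^{1-2\gamma}$ with $\sqrt{{\cal V}_1}-\sqrt{{\cal V}_{1*}}=o(\sqrt{Var(T_1)})$, so $a_{1k}-a_{2k}\to 0$ and the power difference tends to $0$. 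In regime 2(b) the two hypotheses are precisely the exponent inequalities $2\alpha-1>1-2\gamma$ ($\Leftrightarrow\alpha+\gamma>1$) and $2\alpha-1>0$ ($\Leftrightarrow\alpha>\frac{1}{2}$), which give ${\cal V}_{1*}\asymp k^{2\alpha-1}\gg\max(k^{1-2\gamma},1)\asymp {\cal V}_1$, hence $a_{1k}<a_{2k}$ and $\beta(T_{new1})>\beta(T_{new2})$ for large $k$. In Part 3, $\gamma<\frac{1}{2}$ makes the bulk term $2k^{1-2\gamma}$ dominate, and since $\bar \pi$ is unmoved by the outlier one gets ${\cal V}_{1*}\sim 2k\bar \theta^2\sim 2k^{1-2\gamma}$ while ${\cal V}_1$ also carries the $\Theta(1)$ contribution $2\theta_1^2\to 2(\delta(1-\delta))^2$ of the outlying group, so ${\cal V}_1-{\cal V}_{1*}\to 2(\delta(1-\delta))^2>0$ and $\beta(T_{new2})>\beta(T_{new1})$ for large $k$; the extra hypothesis $k^{1-\alpha-\gamma}=o(n)$ is exactly what keeps the conditions of Lemma \ref{lemma:ratio} and Theorem \ref{thm:normality} (hence the $\bar \Phi$ approximations of Corollary \ref{cor:cor1}) valid in this unbalanced array.

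The step I expect to be the main obstacle is the bookkeeping of which term dominates ${\cal V}_1$, ${\cal V}_{1*}$ and $Var(T_1)$ in these sparse, highly unbalanced arrays --- deciding in each regime among $k^{1-2\gamma}$, $nk^{\alpha}$ and an $O(1)$ outlier contribution --- together with, in Part 1(b), cleanly reconciling the ${\cal T}_S$-based normalization $({\cal B}_k,{\cal B}_{0k},\mu_k)$ with the $T$-based normalization $({\cal V}_{1*},Var(T_1))$ so that the direction of the inequality is unambiguous.
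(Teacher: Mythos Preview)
Your overall reduction --- compare the asymptotic powers from Corollary \ref{cor:cor1} and note that, since the noncentrality is shared, the sign of $\beta(T_{new2})-\beta(T_{new1})$ is that of ${\cal V}_1-{\cal V}_{1*}$ --- is exactly the paper's starting point, and your treatment of Part~2 matches the paper's case analysis of $({\cal V}_{1*}-{\cal V}_1)/{\cal V}_1$.

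Two places differ from the paper. In Part~1 (first claim) the paper does not split ${\cal V}_1-{\cal V}_{1*}$ into an ${\cal A}_1$--part and an ${\cal A}_2$--part; it applies Jensen directly to the \emph{combined} function $f(x)={\cal A}_1(x(1-x))^2+{\cal A}_2\,x(1-x)$, whose convexity on the required range absorbs the (concave) linear--in--$\theta$ piece. Your route needs the extra step that the ${\cal A}_2$ concavity gap, which equals $-{\cal A}_2\sum_i(\pi_i-\bar\pi)^2\le 0$, is $o({\cal V}_1)$; that is plausible from ${\cal A}_{2i}=n_i/N^2$ but is not the ``smaller order than ${\cal A}_1\sum_i\theta_i^2$'' you assert. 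In Part~1 (second claim) the paper takes a much shorter path than your ``term-by-term reconciliation of $({\cal B}_k,{\cal B}_{0k})$ with $({\cal V}_{1*},Var(T_1))$'': under $n\bar\theta\to\infty$ one has $\hat{\cal B}_{0k}=2k(1+o_p(1))$ and ${\cal V}_{1*}=2k\bar\theta^2(1+o(1))$, so that
\[
T_{new2}-T_\chi \;=\; \frac{k\hat{\bar\pi}(1-\hat{\bar\pi})-\sum_{i=1}^k \hat\pi_i(1-\hat\pi_i)}{\sqrt{2k}\,\hat{\bar\pi}(1-\hat{\bar\pi})}\,(1+o_p(1))\;\ge\;0
\]
by concavity of $x\mapsto x(1-x)$, giving $P(T_{new2}\ge T_\chi)\to 1$ and hence $\beta(T_{new2})\ge\beta(T_\chi)$ without ever lining up the two normalizations.

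There is a genuine gap in your Part~3. With $\gamma<\tfrac12$ both ${\cal V}_1$ and ${\cal V}_{1*}$ grow like $2k^{1-2\gamma}\to\infty$, so your conclusion ${\cal V}_1-{\cal V}_{1*}\to 2(\delta(1-\delta))^2$ yields only
\[
\sqrt{{\cal V}_1}-\sqrt{{\cal V}_{1*}}=O\!\bigl(k^{-(1-2\gamma)/2}\bigr)\to 0,
\]
hence $a_{1k}-a_{2k}\to 0$ and $\lim_k(\beta(T_{new2})-\beta(T_{new1}))=0$, not $>0$. The paper's own proof in fact works under $1-2\gamma<0$, i.e.\ $\gamma>\tfrac12$: together with $k^{1-\alpha-\gamma}=o(n)$ this gives ${\cal V}_{1*}=o(1)$ while ${\cal V}_1\to 2\delta^2$, so ${\cal V}_1/{\cal V}_{1*}\to\infty$ and the strict inequality follows. (The ``$0<\gamma<1/2$'' in the statement appears to be a misprint for $\gamma>1/2$; your argument cannot be repaired without switching to that regime.)
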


\begin{proof}
See Appendix.
\end{proof}

From Theorem \ref{thm:powercomparison}, we conjecture that   $T_{new2}$ has better powers than others
when sample sizes are homogeneous or similar to each other.
For inhomogeneous sample sizes,   $T_{new1}$ and $T_{new2}$ have different performances from the cases of
2 and 3 in Theorem \ref{thm:powercomparison}. We show numerical studies reflecting these cases later.

\bigskip
Although we compare the powers of the proposed tests under some local alternative,
it is interesting to see different scenario and compare powers.
Instead of an analytical approach, we present numerical studies as follows.
Since the asymptotic powers of $T_{new1}$ and $T_{new2}$ depend on the behavior of
${\cal V}_1$ and ${\cal V}_{1*}$, we compare those two variances under a variety of situations.
If   ${\cal V}_{1*}> {\cal V}_1$, then
$T_{new1}$ is more powerful than $T_{new2}$;otherwise, we have an opposite result.
Although we compared the powers of tests in this paper in Theorem \ref{thm:powercomparison},
there are numerous additional situations which are not covered analytically.
We provide some additional situations from numerical studies here.
We take $k=100$ and we generate sample sizes $n_i \sim \{20,21,\ldots,200 \}$ uniformly.  The left panel is for $\pi_i \sim U(0.01, 0.2)$ and
the left panel is for $\pi \sim U(0.01,0.5)$ where $U(a,b)$ is the uniform distribution
in $(a,b)$.   We consider $1,000$ different configurations of $(n_i, \pi_i)_{1\leq i \leq 100}$ for each panel.
We see that $Var(T_1)$ and $Var(T_1)_*$ have different behavior when $\pi_i$s are
generated different ways. If $\pi_i$s are widely spread out, then $Var(T_1)_*$ is larger, otherwise  $Var(T_1)$ seems to be larger from our simulations.

\begin{figure}[ht]
\begin{center}
\caption{Comparison of $Var(T_1)$ and $Var(T_1)_*$}
\includegraphics[scale=0.3]{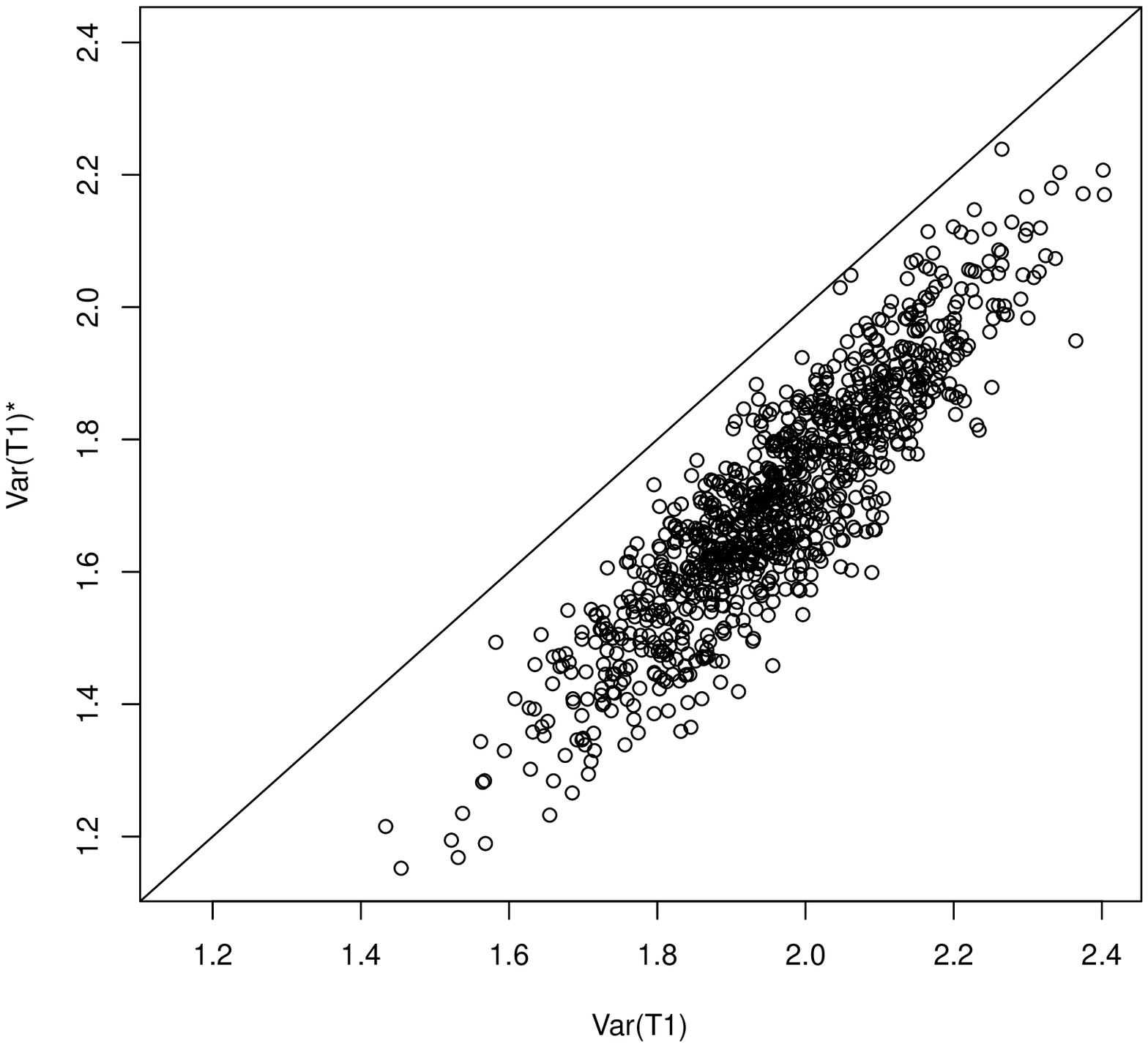}
\includegraphics[scale=0.3]{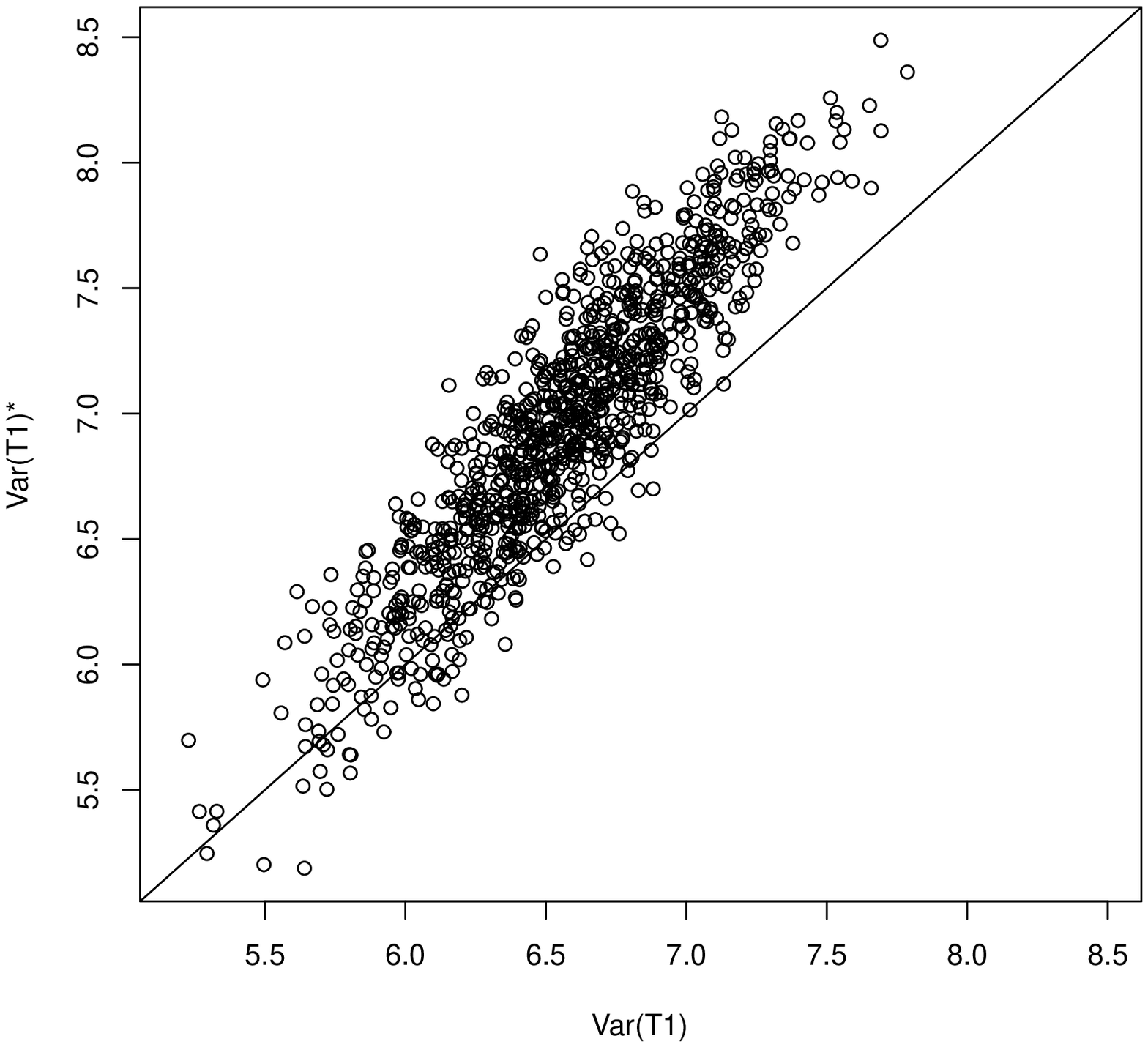}
\label{fig:variance}
\end{center}
\end{figure}

%
%
We present simulation studies comparing the performance of $T_{new1}$, $T_{new2}$ and existing tests.
They have different performances depending on different situations.

\section{Simulations}
In this section, we present simulations studies to compare our proposed tests with existing procedures.
\label{sec:simulation_homogeneity}

We first adopt the following simulation set up and evaluate our proposed tests.
Let us define
\begin{eqnarray*}
\mathb{n}_8 &=& 20(2,2^2,2^3,2^4,2^5,2^6,2^7,2^8) \\
\mathb{n}_{40} &=& 20(\mathb{n}_{1}^*,\mathb{n}_{2}^*,\ldots, \mathb{n}_{8}^*) = 20(2,\ldots,2,2^2,\ldots,2^2,\ldots, 2^8,\ldots,2^8)
\end{eqnarray*}
where  $\mathb{n}_{m}^*=(2^m,2^m,\ldots, 2^m)$ is a $8$ dimensional vector.
We consider the following simulations.
\begin{enumerate}
\item[Setup 1] $\pi_i=0.001$ for $1\leq i \leq k-1$ and $\pi_k=0.001+\delta$ for $k=8$ and $\mathb{n}_8$ 
\item[Setup 2] $\pi_i=0.001+\delta$ for $k=1$ and $\pi_i = 0.001$ for $2\leq i \leq k$ for $k=8$ and $\mathb{n}_8$ 
\item[Setup 3]  $\pi_1 = 0.001 + \delta$ and  $\pi_i=0.001$ for $2\leq i \leq 8$, $k=8$, $n_i= 2560$ for $1\leq i \leq 8$
\item[Setup 4] $\pi_i=0.001$ for $1\leq i \leq k-1$ and $\pi_k=0.001+\delta$ for $k=40$ and $\mathb{n}_{40}$ 
\item[Setup 5] $\pi_i=0.001+\delta$ for $k=1$ and $\pi_i = 0.001$ for $2\leq i \leq k$ for $k=40$ and $\mathb{n}_{40}$ 
\item[Setup 6] $\pi_i=0.001+\delta$ for $i=1$  and $\pi_i=0.001$ for $2 \leq i \leq k$. $n_i= 2560$ for $1\leq i \leq 40$
\end{enumerate}
As test statistics, we use $T_{new1}$,  $M_1$, $T_{new2}$, $M_2$, TS, modTS and PW.
Here,  as discussed in Remark \ref{remark:mleV},
$M_1$ uses $\hat{\cal V}_1^{mle}$  as an estimator of ${\cal V}_1$ in $T_{new1}$ and
$M_2$ uses $\hat{\cal V}_{1*}^{mle}$ for ${\cal V}_{1*}$ in $T_{new2}$.
TS represents the test in (2) and modTS represents the test in (10).
Chi represents  chi-square test based on $T_S > \chi^2_{k-1, 1-\alpha}$
where $\chi^2_{k-1,1-\alpha}$ is the $(1-\alpha)$ quantile of chisquare distribution with degrees of freedom $k-1$.
PW is the test in Potthoff and Whittinghill (1966)
and BL represents the test in Bathke and Lankowski (2005). Note that BL is available only when
sample sizes are all equal.
For calculation of size and power of each test, we simulate 10,000 samples and compute empirical size and power based on 10,000 p values.

{From the above scenario, we consider inhomogeneous sample sizes (Setup 1,2,4 and 5) and
homogeneous sample sizes (Setup 3 and 6). Furthermore, when sample sizes are inhomogeneous,
two cases are considered : one is the case that different $\pi_i$ occurs for a study with large sample (Setup 1 and 4) and
the other for a study with small sample (Setup 2 and 5).
Setup 1-6 consider the cases that only one study has a different probability ($0.001+\delta$)
and all the others have the same probability (0.001).
On the other hand, we may consider the following cases which represent all probabilities are different from each other.
\begin{enumerate}
\item[Setup 7]  $\pi_i=0.001(1+\epsilon_i)$, $k=40$, $n_i = 2560$  for $1\leq i \leq 40$
where $\epsilon_i$s are equally spaced grid in $[-\delta, \delta]$.
\item[Setup 8]  $\pi_i=0.01(1+\epsilon_i)$, $k=40$, $\mathb{n}_{40}^*$
where $\epsilon_i$s are equally spaced grid in $[-\delta, \delta]$.
\end{enumerate}
}

%
%
\begin{table}
\begin{center}
\begin{tabular}{c|cccccccc}
\hline
$\delta$&$T_{new1}$&$M1$ &$T_{new2}$&$M2$ &TS& modTS& Chi& PW\\
\hline
0.000& 0.009& 0.005&  0.029&0.022& 0.114 & 0.048 & 0.103 & 0.006  \\
0.001& 0.070& 0.052&  0.023&0.022& 0.066 & 0.029 & 0.060 & 0.000  \\
0.002& 0.249& 0.184&  0.092&0.091& 0.053 & 0.025 & 0.048 & 0.001 \\
0.003& 0.490& 0.375&  0.253&0.251& 0.057 & 0.022 & 0.046 & 0.022 \\
0.004& 0.688& 0.562&  0.455&0.449& 0.112 & 0.032 & 0.082 & 0.085 \\
0.005& 0.838& 0.717&  0.648&0.642& 0.217 & 0.073 & 0.169 & 0.217 \\
0.006& 0.925& 0.831&  0.803&0.797& 0.391 & 0.170 & 0.315 & 0.397 \\
0.007& 0.966& 0.895&  0.897&0.893& 0.561 & 0.312 & 0.490 & 0.588 \\
0.008& 0.987& 0.936&  0.953&0.950& 0.717 & 0.487 & 0.649 & 0.723 \\
0.009& 0.995& 0.964&  0.979&0.977&  0.835 & 0.651& 0.786 & 0.841 \\
\hline
\end{tabular}
\caption{Powers under Setup 1. The cases of $\delta=0$ represent Type I errors of tests. $M_1$ uses $\hat{\cal V}_1^{mle}$  as an estimator of ${\cal V}_1$ in $T_{new1}$ and
$M_2$ uses $\hat{\cal V}_{1*}^{mle}$ for ${\cal V}_{1*}$ in $T_{new2}$.
TS represents the test in (2) and modTS represents the test in (10).
Chi represents  chi-square test.
PW is the test in Potthoff and Whittinghill (1966) }
\end{center}
\end{table}
 \begin{table}
 \begin{center}
 \begin{tabular}{c|cccccccc}
 \hline
 $\delta$&$T_{new1}$&$M1$ &$T_{new2}$&$M2$ &TS& modTS& Chi& PW \\
 \hline
 0.00 &  0.009 &  0.006 &   0.029 &0.023 &  0.110 & 0.048 & 0.097  & 0.005  \\
 0.01 &  0.009 &  0.005 &   0.043 &0.038 &  0.130 & 0.065 & 0.117  & 0.009  \\
 0.02 &  0.014 &  0.004 &   0.091 &0.087 &  0.149 & 0.085 & 0.138  & 0.010 \\
 0.03 &  0.027 &  0.007 &   0.140 &0.137 &  0.168 & 0.107 & 0.155  & 0.011 \\
 0.04 &  0.054 &  0.011 &   0.213 &0.209 &  0.182 & 0.121 & 0.171  & 0.018 \\
 0.05 &  0.083 &  0.020 &   0.284 &0.282 &  0.191 & 0.136 & 0.181  & 0.027 \\
 0.06 &  0.122 &  0.033 &   0.359 &0.357 &  0.216 & 0.157 & 0.206  & 0.034 \\
 0.07 &  0.168 &  0.053 &   0.432 &0.430 &  0.236 & 0.178 & 0.226  & 0.045 \\
 0.08 &  0.214 &  0.073 &   0.495 &0.494 &  0.249 & 0.195 & 0.238  & 0.069 \\
 0.09 &  0.274 &  0.103 &   0.566 &0.565 &  0.260 & 0.202 & 0.248 & 0.092 \\
 \hline
 \end{tabular}
 \caption{Powers under Setup 2. The cases of $\delta=0$ represent Type I errors of tests.}
 \end{center}
 \end{table}
 \begin{table}[ht]
\centering
\begin{tabular}{rrrrrrrrrr}
  \hline
   $\delta$ &$T_{new1}$&$M1$ &$T_{new2}$ &$M2$&TS& modTS& Chi& PW &BL \\
  \hline
0.0000  & 0.036 & 0.023 & 0.054 & 0.060 & 0.040 & 0.034 & 0.030 & 0.018&0.065   \\
0.0005  & 0.057 & 0.041 & 0.080 & 0.085 & 0.057 & 0.050 & 0.045 & 0.032&0.099   \\
0.0010  & 0.123 & 0.099 & 0.152 & 0.158 & 0.119 & 0.106 & 0.095 & 0.078&0.186   \\
0.0015  & 0.244 & 0.207 & 0.283 & 0.291 & 0.229 & 0.209 & 0.193 & 0.175&0.315   \\
0.0020  & 0.388 & 0.345 & 0.430 & 0.436 & 0.379 & 0.358 & 0.341 & 0.309&0.459   \\
0.0025  & 0.545 & 0.498 & 0.580 & 0.585 & 0.537 & 0.513 & 0.492 & 0.461&0.614   \\
0.0030  & 0.669 & 0.631 & 0.696 & 0.700 & 0.671 & 0.649 & 0.632 & 0.598&0.738   \\
0.0035  & 0.789 & 0.760 & 0.813 & 0.815 & 0.790 & 0.775 & 0.756 & 0.726&0.839   \\
0.0040  & 0.863 & 0.842 & 0.880 & 0.882 & 0.863 & 0.853 & 0.840 & 0.816&0.900   \\
0.0045  & 0.922 & 0.909 & 0.932 & 0.933 & 0.919 & 0.913 & 0.903 & 0.893&0.945   \\
   \hline
\end{tabular}
\caption{Powers under Setup 3. The cases of $\delta=0$ represent Type I errors of tests.
BL represents the test in Bathke and Lankowski (2005).}
\end{table}

%
%
%
%
%
%
%
%
%
%
%

 \begin{table}[ht]
 \centering
 \begin{tabular}{rrrrrrrrr}
   \hline
    $\delta$ &$T_{new1}$&$M1$ &$T_{new2}$ &$M2$&TS& modTS& Chi & PW \\
   \hline
 0.000 & 0.022 & 0.003 & 0.042 & 0.042&  0.196 & 0.060 & 0.186  &0.016  \\
 0.001 & 0.080 & 0.018 & 0.067 & 0.069&  0.160 & 0.048 & 0.151  &0.088  \\
 0.002 & 0.285 & 0.090 & 0.202 & 0.204&  0.199 & 0.057 & 0.189  &0.311  \\
 0.003 & 0.562 & 0.242 & 0.445 & 0.448&  0.296 & 0.099 & 0.282  &0.603  \\
 0.004 & 0.787 & 0.441 & 0.690 & 0.694&  0.462 & 0.185 & 0.442  &0.829  \\
 0.005 & 0.919 & 0.623 & 0.864 & 0.866&  0.659 & 0.355 & 0.640  &0.939  \\
 0.006 & 0.971 & 0.765 & 0.946 & 0.947&  0.804 & 0.542 & 0.791  &0.983  \\
 0.007 & 0.991 & 0.857 & 0.982 & 0.982&  0.913 & 0.723 & 0.906  &0.995  \\
 0.008 & 0.998 & 0.928 & 0.995 & 0.995&  0.964 & 0.855 & 0.960  &0.999  \\
 0.009 & 1.000 & 0.963 & 0.999 & 0.999&  0.989 & 0.934 & 0.987  &0.999  \\
    \hline
 \end{tabular}
  \caption{Powers under Setup 4. The cases of $\delta=0$ represent Type I errors of tests.}
 \end{table}

   \begin{table}[ht]
   \centering
   \begin{tabular}{rrrrrrrrrr}
     \hline
    $\delta$ &$T_{new1}$&$M1$ &$T_{new2}$ &$M2$&TS&modTS&Chi& PW\\
     \hline
    0.000 &0.021 & 0.004 & 0.047 & 0.045    & 0.186 & 0.059 & 0.179  &0.021   \\
    0.002 &0.028 & 0.005 & 0.102 & 0.101    & 0.216 & 0.081 & 0.207  &0.017   \\
    0.004 &0.059 & 0.015 & 0.221 & 0.221    & 0.252 & 0.103 & 0.243  &0.021   \\
    0.006 &0.130 & 0.040 & 0.371 & 0.370    & 0.280 & 0.118 & 0.270  &0.028   \\
    0.008 &0.232 & 0.097 & 0.507 & 0.506    & 0.313 & 0.144 & 0.305  &0.045   \\
    0.010 & 0.335 & 0.158 & 0.626 & 0.626    & 0.339 & 0.156 & 0.331  &0.061   \\
    0.012 & 0.454 & 0.252 & 0.730 & 0.729    & 0.364 & 0.175 & 0.356  &0.091   \\
    0.014 & 0.553 & 0.339 & 0.800 & 0.800    & 0.383 & 0.189 & 0.373  &0.126   \\
      \hline
   \end{tabular}
    \caption{Powers under Setup 5. The cases of $\delta=0$ represent Type I errors of tests.}
   \end{table}

    \begin{table}[ht]
    \centering
    \begin{tabular}{rrrrrrrrrr}
      \hline
       $\delta$ &$T_{new1}$&$M1$ &$T_{new2}$ &$M2$& TS& modTS&Chi & PW & BL\\
      \hline
     0.000 & 0.049 & 0.029 & 0.058 & 0.059 &  0.048 & 0.038 & 0.041 & 0.032& 0.061\\
     0.001 & 0.093 & 0.065 & 0.107 & 0.108 &  0.093 & 0.079 & 0.083 & 0.067&0.114\\
     0.002 & 0.273 & 0.222 & 0.297 & 0.299 &  0.271 & 0.240 & 0.249 & 0.236&0.300\\
     0.003 & 0.535 & 0.479 & 0.560 & 0.562 &  0.535 & 0.504 & 0.512 & 0.511&0.568\\
     0.004 & 0.776 & 0.736 & 0.793 & 0.795 &  0.777 & 0.756 & 0.761 & 0.739&0.803\\
     0.005 & 0.902 & 0.884 & 0.910 & 0.911 &  0.911 & 0.901 & 0.903 & 0.891&0.921\\
     0.006 & 0.966 & 0.957 & 0.969 & 0.969 &  0.966 & 0.961 & 0.963 & 0.966&0.974\\
       \hline
    \end{tabular}
     \caption{Powers under Setup 6. The cases of $\delta=0$ represent Type I errors of tests.}
    \end{table}

%
%
%

\begin{table}[ht]
\centering
\begin{tabular}{rrrrrrrrrrr}
  \hline
 $\delta$ &$T_{new1}$&$M1$ &$T_{new2}$ &$M2$&TS& modTS &Chi & PW &BL\\
  \hline
   0 &  0.044 &  0.027 &  0.053 &  0.053 &   0.046 &  0.036 &  0.039  &  0.031&0.066
    \\
 .25 &  0.080 &  0.052 &  0.096 &  0.094 &   0.084 &  0.069 &  0.072  &  0.061&0.102 \\
 .50 &  0.240 &  0.182 &  0.271 &  0.268 &   0.229 &  0.195 &  0.205  &  0.200&0.280 \\
 .75 &  0.596 &  0.513 &  0.633 &  0.630 &   0.601 &  0.553 &  0.569  &  0.541&0.645 \\
1.00 &  0.927 &  0.889 &  0.941 &  0.940 &   0.930 &  0.911 &  0.917  &  0.904&0.945 \\
\hline
\end{tabular}
\caption{ Powers under Setup 7. The cases of $\delta=0$ represent Type I errors of tests.}
\end{table}

\begin{table}[ht]
\centering
\begin{tabular}{rrrrrrrrrr}
  \hline
 $\delta$ &$T_{new1}$&$M1$ &$T_{new2}$ &$M2$&TS& modTS &Chi & PW\\
  \hline
  0.00& 0.047& 0.025& 0.059& 0.059& 0.073& 0.051 &0.073& 0.030\\
  0.25& 0.123& 0.079& 0.089& 0.089& 0.026& 0.017 &0.026& 0.039\\
  0.50& 0.487& 0.409& 0.353& 0.353& 0.061& 0.044 &0.061& 0.088\\
  0.75& 0.893& 0.858& 0.793& 0.792& 0.265& 0.222 &0.265& 0.179\\
  1.00& 0.996& 0.994& 0.985& 0.985& 0.721& 0.673 &0.721& 0.355\\
\hline
\end{tabular}
\caption{ Powers under Setup 8. The cases of $\delta=0$ represent Type I errors of tests.}
\end{table}

From our simulations, we first see that
$T_{new1}$ obtains more powers than $M_1$ while  $T_{new2}$ and $M_2$ obtain almost similar powers.
The performance of $T_{new1}$ and $T_{new2}$ are different depending on different situations.
when sample sizes are homogeneous (Setup 3, 6 and 7),  $T_{new2}$ obtains slightly more power than $T_{news}$ as shown in $(1)$ in  Theorem \ref{thm:powercomparison}.
On the other hand, when sample sizes are inhomogeneous,  $T_{new1}$ seems to have more advantage for the cases
that different probability occurs for large sample sizes while $T_{new2}$ seems to obtain better powers for the opposite case.
Overall, the performances of $T_{new1}$ and $T_{new2}$ are different depending on situations.
Cochran's test seems to fail in controlling a given size, however the modified TS
achieves reasonable empirical sizes.  When sample sizes are homogeneous, the modified TS has comparable powers, however
for inhomogeneous sample sizes, the modified TS has significantly small powers compare to $T_{new1}$ and $T_{new2}$ for Setup 8.


\begin{table}[ht]
	\centering
	\begin{tabular}{rrrrrrrrrr}
		\hline
 $\delta$ &$T_{new1}$&$M1$ &$T_{new2}$ &$M2$&TS& modTS &Chi & PW & BL  \\
 \hline
   0.0 &0.0507& 0.0118& 0.0563& 0.0563 &0.0544& 0.0234& 0.0544& 0.0499& 0.0548  \\
   0.2 &0.1119& 0.0334& 0.1197& 0.1197 &0.1171& 0.0659& 0.1171& 0.1110& 0.1178  \\
   0.4 &0.5031& 0.2796& 0.5205& 0.5204 &0.5142& 0.3868& 0.5142& 0.5014& 0.5157  \\
   0.6 &0.9709& 0.9012& 0.9730& 0.9730 &0.9727& 0.9425& 0.9727& 0.9706& 0.9728  \\
   0.8 &1.0000& 1.0000& 1.0000& 1.0000 &1.0000& 1.0000& 1.0000& 1.0000& 1.0000  \\
   \hline
 \end{tabular}
 \caption{ Powers under Setup 9. The cases of $\delta=0$ represent Type I errors of tests.}
 \end{table}

\begin{table}[ht]
	\centering
	\begin{tabular}{rrrrrrrrrr}
		\hline
 $\delta$ &$T_{new1}$&$M1$ &$T_{new2}$ &$M2$&TS& modTS &Chi & PW\\
 \hline
   0.0 &0.034 &0.000& 0.055& 0.055& 0.182& 0.046& 0.182& 0.043 \\
   0.2 &0.055 &0.000& 0.050& 0.050& 0.002& 0.000& 0.002& 0.055 \\
   0.4 &0.164 &0.003& 0.101& 0.101& 0.000& 0.000& 0.000& 0.081 \\
   0.6 &0.458 &0.042& 0.278& 0.278& 0.000& 0.000& 0.000& 0.122 \\
   0.8 &0.840 &0.360& 0.650& 0.650& 0.000& 0.000& 0.000& 0.200 \\
   1.0 &0.985 &0.875& 0.933& 0.933& 0.000& 0.000& 0.000& 0.309 \\
	\hline
\end{tabular}
\caption{ Powers under Setup 10. The cases of $\delta=0$ represent Type I errors of tests.}
\end{table}

As suggested by a reviewer, we consider the following two more numerical studies
when $k$ is extremely large.
\begin{enumerate}
\item[Setup 9]  $\pi_i=0.01(1+\epsilon_i)$, $k=2,000$, $n_i = 100$  for $1\leq i \leq 2,000$
where $\epsilon_i$s are equally spaced grid in $[-\delta, \delta]$.
\item[Setup 10]  $\pi_i=0.01(1+\epsilon_i)$, $k=2,000$,
$\mathb{n}=(\mathb{n}_{1,250}, \mathb{n}_{2,250},\ldots, \mathb{n}_{8,250})$
       where  $\mathb{n}_{m,250}=(2^m, 2^m, \ldots, 2^m)$ is a $250$ dimensional vector with all components $2^m$
       and $\epsilon_i$s are equally spaced grid in $[-\delta, \delta]$.
\end{enumerate}
Setup 9 is the case of a extremely large number of groups with small sample sizes.
As mentioned in the introduction, we focus on sparse count data in the sense that $\pi_i$s are small, so
we take  $\pi_i=0.01$ and homogeneous sample sizes $n_i=100$ so that we have $E(X_i)=n_i\pi_i$ which represents very sparse data in each group.
For the number of groups, we use $k=2,000$ which is much larger than $n_i=100$.
Table 9 shows sizes and powers of all tests and we see that
all tests have similar performances when sample sizes are homogeneous.
On the other hand, for the case that sample sizes are highly unbalanced which is the case of Setup 10,
Table 10 shows that
our proposed tests control the nominal level of size and obtain increasing patter of powers while
tests based on chi-square statistics fail in controlling the nominal level of size and obtaining powers.
In particular, those chi-square based tests have decreasing patterns of powers
even though the effect sizes ($\delta$ in this case) increases.
PW controls the size and has increasing pattern of powers, however
the powers of PW are much smaller than those of our proposed tests.
All codes will be available upon request.


%
%
%
%
\section{Real Examples}
In this section, we provide real examples for testing the homogeneity of binomial proportions
from a large number of independent groups.
We apply our proposed tests and existing tests to
the rosiglitazone
data  in Nissen and Wolski (2007).
The data set includes the 42 studies and
consists of
 study size ($N$), number of
myocardial infarctions ($MI$) and number of deaths ($D$) for rosiglitazone (treatment) and
the corresponding results under control arm  for each study.

%

We consider testing (\ref{eqn:hypothesis}) for the proportions of myocardial infarctions and    death rate ($D$)  from cardiovascular causes.
There are four situations,  $(i)$ MI/Rosiglitazone, $(ii)$ Death from Cardiovascular(DCV)/Rosiglitazone,
$(iii)$ MI/Control and $(iV)$ Death from Cardiovascular(DCV)/Control.
Table \ref{tab:homo} shows the $p$-values for different situations and different test statistics.
In case of MI/Rosiglitazone and MI/Control,  all tests have 0 p-value. On the other hand, for the other two cases,
some tests have different results. For DCV/Rosiglitazone, $T_{new2}$, TS and modTS have
small $p$-values while $T_{new1}$ and PW have slightly larger p-values.
For DCV/Control, $T_{new1}$ and $T_{new2}$ have much small p-values (0.107 and 0.079) compared to
$T_S$, modTS, Chi and PW (0.609, 0.406, 0.584  and 0.229, respectively).

\begin{table}[ht]
\begin{center}
\begin{tabular}{c|cccccccc}
\hline
          &$T_{new1}$ &$M1$   &$T_{new2}$&$M2$  &TS        &modTS&Chi   & PW     \\
\hline
MI/Rosig  &0.000      & 0.000 & 0.000   & 0.000 & 0.000    &0.000& 0.000& 0.000  \\
DCV/Rosig &0.063      & 0.133 & 0.003   & 0.004 & 0.000    &0.004& 0.002& 0.059 \\
MI/Cont   &0.000      & 0.000 & 0.000   & 0.000 & 0.000    &0.000& 0.000& 0.000  \\
DCV/Cont  &0.107      & 0.242 & 0.079   & 0.084 & 0.609    &0.406& 0.584& 0.229  \\
\hline
\end{tabular}
\caption {p-values  for homogeneity tests.
Rosig=Rosiglitazone Group, Cont=Control Group,
MI=Myocardial Infarction, DCV=Death from Cardiovascular.}
\end{center}
\label{tab:homo}
\end{table}

\section{Concluding Remarks}
In this paper, we considered testing homogeneity of  binomial proportions  from
a large number of independent studies.  In particular, we focused on the sparse data and heterogeneous sample sizes which may
affect the identification of null distributions.
We proposed new tests
  and showed their asymptotic results under some regular conditions.
We provided simulations and real data examples which show that our proposed tests are convincing in case of sparse and a large number of studies.
  This is a convincing result since our proposed test
  is most reliable in controlling a given size from our simulations,
  so small p-values from our proposed test is strong evidence against the null hypotheses.

\appendix
\section*{Appendix}

\section{Proof of Theorem \ref{thm:modTS}}
We use the Lyapounov's condition for the asymptotic normality of $\frac{{\cal T}_S- E({\cal T}_S)}{\sqrt{{\cal B}_k}}$.
Let ${\cal T}_{Si} = \frac{(X_i - n_i \bar \pi)^2}{n_i \bar \pi (1-\bar \pi)}$, then we define
 ${\cal D}_i  = {\cal T}_{Si} - E({\cal T}_{Si})  =   \frac{(X_i - n_i \bar \pi)^2}{n_i \bar \pi (1-\bar \pi)} - \frac{n_i(\pi_i -\bar \pi)^2}{n_i \bar \pi (1-\bar \pi)}
- \frac{\vari}{n_i \bar \pi (1-\bar \pi)} = \frac{1}{n_i \bar \pi (1-\bar \pi) } ((X_i - n_i \pi_i)^2 + 2n_i (X_i - n_i \pi_i) (\pi_i -\bar \pi) - n_i \vari )$.  We show that the Lyapounov's condition is satisfied,
$ \frac{\sumk E({\cal D}_i^4) }{{\cal B}_k^2} \rightarrow 0.$
We see that
\begin{eqnarray*}
 \frac{\sumk E({\cal D}_i^4)}{{\cal B}_k^2} &\leq&
\frac{1}{{\cal B}_k^2}\sumk \frac{  n_i^4 E(\hat \pi_i - \pi_i)^8  + 2^4 n_i^4 (\pi_i -\bar \pi)^4  n_i^4  E(\hat \pi_i - \pi_i)^4 + n_i^4 \vari^4   }{n_i^4(\bar \pi (1-\bar \pi))^4}  \\
&=& \frac{1}{ (\bar \pi (1-\bar \pi))^4 {\cal B}_k^2 } \sumk  \left[\left(\theta_i^4 + \frac{\theta_i}{n_i} \right)
+ n_i^2 (\pi_i -\bar \pi)^4 (3\theta_i^2 + \frac{(1-6\theta_i)\theta_i}{n_i}) + \theta_i^4\right]    \\
&\leq & \frac{\sumk \left(2\theta_i^4 + \frac{\theta_i}{n_i} \right)}{(\bar \pi (1-\bar \pi))^4 {\cal B}_k^2 }
+ \frac{ 3\sumk n_i^2 \theta_i (\pi_i -\bar \pi)^4 (\theta_i + \frac{1}{n_i}) }{(\bar \pi (1-\bar \pi))^4{\cal B}_k^2} \\
&\rightarrow& 0
\end{eqnarray*}
from the given conditions. Therefore, we have the asymptotic normality of $\frac{{\cal T}_S-E({\cal T}_S)}{\sqrt{{\cal B}_k}} \rightarrow N(0,1) $ in distribution.
Furthermore, we also have the asymptotic normality of
\begin{eqnarray*}
T_0 = \frac{T_S-k}{\sqrt{{\cal B}_{0k}}} = \sqrt{\frac{{\cal B}_k}{{\cal B}_{0k}}} \frac{T_S-k}{\sqrt{{\cal B}_k}}
+ \frac{k-E(T_S)}{\sqrt{{\cal B}_{0k}}}
= \sigma_k \frac{T_S-k}{\sqrt{{\cal B}_{k}}} + \mu_k
\end{eqnarray*}
which leads to
$ P( T_0 \geq z_{1-\alpha})  = P(\sigma_k \frac{T_S-k}{\sqrt{{\cal B}_{k}}} + \mu_k  \geq z_{1-\alpha}    ) = P(\frac{T_S -k}{\sqrt{{\cal B}_k}} \geq \frac{z_{1-\alpha}}{\sigma_k} -\mu_k)$. Using  $\frac{T_S -k}{\sqrt{{\cal B}_k}}  \rightarrow N(0,1)$ in distribution,    we have   $ P(T_0 \geq z_{1-\alpha}) - \bar \Phi( \frac{z_{1-\alpha}}{\sigma_k} -\mu_k) \rightarrow 0.$
   \qed

%

\section{Proof of Lemma \ref{lemma:varianceT1}}
Since   $T_{1i}$ and $T_{1j}$ for $i\neq j$ are independent,  we have ${\cal V}_1 \equiv Var(T_1) = \sumk Var(T_{1i})$ where
\begin{eqnarray*}
Var(T_{1i}) &=& n_i^2 Var[(\hat \pi_i - \pi_i)^2] + d_i^2 Var[\hatvari] + 4 n_i^2 (\pi_i-\bar \pi)^2  Var[ (\hat \pi_i-\pi_i)] \\
 &&-2n_i d_i Cov((\hat \pi_i - \pi_i)^2, \hat \pi_i (1-\hat \pi_i)) \\
  && + 2 Cov(n_i(\hat \pi_i-\pi_i)^2, 2n_i (\hat \pi_i -\pi_i)(\pi_i-\bar \pi))-2 Cov( 2n_i(\hat \pi_i - \pi_i)(\pi_i-\bar \pi), d_i\hatvari).
\end{eqnarray*}
Using the following results
\begin{eqnarray*}
Var[(\hat \pi_i - \pi_i)^2]  &=&  E[(\hat \pi_i - \pi_i)^4] - (E[(\hat \pi_i - \pi_i)^2])^2 \\
&=&  \frac{2\theta_i^2}{n_i^2} + \frac{(1-6\theta_i)\theta_i}{n_i^3} \\
Var[\hatvari] &=&   \frac{(1-\theta_i)\theta_i}{n_i} - \frac{2\theta_i(1-4\theta_i)}{n_i^2} + \frac{(1-6\theta_i)\theta_i}{n_i^3} \\
Cov( (\hat \pi_i - \pi_i)^2, \hatvari) &=& \frac{n_i-1}{n_i^3} \theta_i(1-6\theta_i)\\
Cov((\hat \pi_i- \pi_i)^2, \hat \pi_i-\pi_i) &=& E(\hat \pi_i-\pi_i)^3 = \frac{(1-2\pi_i)\theta_i}{n_i^2} \\
Cov( (\hat \pi_i -\pi_i), \hatvari) &=& \frac{(1-2\pi_i) \theta_i}{n_i}\left(1 -\frac{1}{n_i} \right),
\end{eqnarray*}
 we derive
\begin{eqnarray*}
&&Var(T_1)  \\
&=&  \sum_{i=1}^k  \left\{ \theta_i^2 \left(2-\frac{6}{n_i} - \frac{d_i^2}{n_i} + \frac{8d_i^2}{n_i^2}-\frac{6d_i^2}{n_i^3} + 12 d_i \frac{n_i-1}{n_i^2} \right) +  \theta_i \left(\frac{1}{n_i} +\frac{d_i^2}{n_i} - \frac{2d_i^2}{n_i^2}  + \frac{d_i^2}{n_i^3}- 2d_i\frac{n_i-1}{n_i^2} \right)   \right\}\\
&&+ 4 \sum_{i=1}^k n_i(\pi_i - \bar \pi)^2 \theta_i + \frac{4}{N}\sumk n_i(\pi_i - \bar \pi)(1-2\pi_i)\theta_i  \\
&=&  \sumk {\cal A}_{1i}  \theta_i^2 + \sumk {\cal A}_{2i} \theta_i + 4 \sum_{i=1}^k n_i(\pi_i - \bar \pi)^2 \theta_i
+\frac{4}{N}\sumk n_i(\pi_i - \bar \pi)(1-2\pi_i)\theta_i
\end{eqnarray*}
where $ {\cal A}_{1i}= \left( 2-\frac{6}{n_i} - \frac{d_i^2}{n_i} + \frac{8d_i^2}{n_i^2}-\frac{6d_i^2}{n_i^3} + 12 d_i \frac{n_i-1}{n_i^2}  \right)$
and ${\cal A}_{2i} =\left(\frac{1}{n_i} +\frac{d_i^2}{n_i} - \frac{2d_i^2}{n_i^2}  + \frac{d_i^2}{n_i^3}- 2d_i\frac{n_i-1}{n_i^2} \right) = \frac{n_i}{N^2}$ from $d_i = \frac{n_i}{n_i-1} \left(1 -\frac{n_i}{N} \right)$.

\section{Proof of Lemma \ref{lemma:vart1} }
\begin{enumerate}
\item
Using $d_i = \frac{n_i}{n_i-1} (1-\frac{n_i}{N}) < 2$, we can derive ${\cal A}_{1i}$ is uniformly bounded since
${\cal A}_{1i} = 2   - \frac{6}{n_i} - \frac{6 d_i^2}{n_i} + \frac{8d_i^2}{n_i^2} -\frac{6d_i^2}{n_i^3}  + 12 \frac{n_i}{n_i-1} \frac{n_i-1}{n_i^2}(1-\frac{n_i}{N}) = 2 + \frac{6}{n_i} -\frac{12}{N}
+ \frac{d_i^2}{n_i}( -1 + \frac{8}{n_i} -\frac{6}{n_i^2}) (1-\frac{n_i}{N}) $.
Let $x = \frac{1}{n_i} \leq \frac{1}{2}$, then
$ f(x) = (-1 + \frac{8}{n_i} -\frac{6}{n_i^2}) =   -6(x-\frac{2}{3})^2 + \frac{7}{9}$ which has the value
$ -1< f(x) \leq \frac{3}{2}$. Therefore,  we have
$  2 + \frac{6}{n_i} -\frac{12}{N} + \frac{6}{n_i} \geq   {\cal A}_{1i}  \geq 2 + \frac{6}{n_i} -\frac{12}{N} -  \frac{4}{n_i}$.
Using  $n_i\geq 2$ and $N \rightarrow \infty$ as $k \rightarrow \infty$,
  lower and upper bound are uniformly bounded away from 0 and $\infty$ for all $i$.  Therefore,  we have ${\cal A}_{1i} \asymp  1$
  and ${\cal A}_{2i}   =   \frac{n_i}{N^2}$ leading to
      ${\cal V}_1 =  \sumk {\cal A}_{1i} \theta_i^2 + \sumk {\cal A}_{2i} \theta_i \asymp  \sumk \theta_i^2 +   \frac{1}{N^2}  \sumk n_i \theta_i$.
%
%

\item
Let  ${\cal G}_n   =  4 \sumk  n_i (\pi_i -\bar \pi)^2 \theta_i  +  4 \frac{1}{N} \sumk n_i (\pi_i-\bar \pi)(1-2\pi_i)\theta_i
=  4 \sumk \theta_i G_{i} $ where $G_i =   n_i(\pi_i -\bar \pi)^2 + \frac{n_i}{N}(\pi_i-\bar \pi)(1-2\pi_i)$.
If we define ${\cal B}  = \{ i :  |\pi_i - \bar \pi| \geq \frac{(1+\epsilon)}{N} \}$
for some $\epsilon>0$, then we decompose
\begin{eqnarray}
{\cal V}_1 &=& \underbrace{ \sum_{i \in {\cal B}} ( {\cal A}_{1i} \theta_i^2 + {\cal A}_{2i} \theta_i)}_{{\cal F}_1} +
   \underbrace{\sum_{i \in {\cal B}^c} ( {\cal A}_{1i} \theta_i^2 + {\cal A}_{2i} \theta_i)}_{{\cal F}_2} \\
{\cal G}_n &=&  4 \underbrace{\sum_{i \in {\cal B}} \theta_i G_i}_{{\cal G}_{n1}} +
4 \underbrace{\sum_{i \in {\cal B}^c}\theta_i G_i}_{{\cal G}_{n2}} \equiv
4 {\cal G}_{n1} + 4 {\cal G}_{n2}.
\end{eqnarray}

For $i \in {\cal B}$, we have $ \frac{n_i}{N}| (1-2\pi_i) (\pi_i  - \bar \pi) \theta_i| \leq \frac{n_i}{(1+\epsilon)} (\pi_i - \bar \pi)^2 \theta_i$ which implies
\begin{eqnarray*}
\frac{4\epsilon}{1+\epsilon} \sum_{i \in {\cal B}}  n_i  ( \pi_i -\bar \pi  )^2 \theta_i \leq   4 {\cal G}_{n1} \leq
\frac{4 (2+\epsilon)}{1+\epsilon} \sum_{i \in {\cal B}}  n_i  ( \pi_i -\bar \pi  )^2 \theta_i.
\end{eqnarray*}
This leads to $4 {\cal G}_{n1} \asymp  \sum_{i \in {\cal B}} n_i (\pi_i- \bar \pi)^2 \theta_i$ and
\begin{eqnarray}
       {\cal F}_1+ 4 {\cal G}_{n1} \asymp
        {\cal F}_1+ \sum_{i \in {\cal B}}  n_i  ( \pi_i -\bar \pi  )^2 \theta_i.
     \label{eqn:Gn1}
\end{eqnarray}
For ${\cal B}^c = \{ i |  |\pi_i -\bar \pi| < \frac{(1+\epsilon)} {N}\}$, we first show
$ {\cal F}_2 +  4{\cal G}_{n2}  \geq    \sum_{i \in {\cal B}^c} {\cal A}_{1i} \theta_i^2$.
For $i \in {\cal B}^c$ and $x = \pi_i-\bar \pi$, we have
$ G_i = n_i( x+  \frac{1}{2N} (1-2\pi_i))^2 - \frac{(1-2\pi_i)^2 n_i }{4N^2} \geq  -\frac{ (1-2\pi_i)^2 n_i}{4N^2}$ leading to
\begin{eqnarray}
{\cal F}_2 + 4 {\cal G}_{n2}  &\geq&
  \sum_{i \in {\cal B}^c} {\cal A}_{1i}\theta_i^2  +    \frac{1}{N^2}   \sum_{i \in {\cal B}^c} {n_i\theta_i} \left( 1-(1-2\pi)^2   \right) \nonumber \\
   &=&  \sum_{i\in {\cal B}^c}  {\cal A}_{1i} \theta_i^2 +    \frac{4}{N^2}  \sum_{i \in {\cal B}^c} {n_i \theta_i^2}
   = \sum_{i\in {\cal B}^c}  {\cal A}_{1i} \theta_i^2 + 4 \sum_{i \in {\cal B}^c} {\cal A}_{2i}\theta_i^2 \nonumber \\
   &>& \sum_{i\in {\cal B}^c}  {\cal A}_{1i} \theta_i^2.  \label{eqn:Gn2_1}
\end{eqnarray}
The upper bound of $4{\cal G}_{n_2}$ is
\begin{eqnarray*}
4 {\cal G}_{n_2}     &\leq&  \frac{4(1+\epsilon)}{N^2} \sum_{i \in {\cal B}^c}  n_i \theta_i = 4(1+\epsilon) \sum_k  {\cal A}_{2i} \theta_i
     \label{eqn:Gn2_2}
\end{eqnarray*}
resulting in
\begin{eqnarray}
{\cal F}_2 + 4 {\cal G}_{n2} &\leq&   4(1+\epsilon) \sum_{i\in {\cal B}^c} {\cal A}_{1i} \theta_i^2 +
  4(1+\epsilon) \sum_{i \in {\cal B}^c} {\cal A}_{2i} \theta_i  + 4 \sum_{i \in {\cal B}^c} n_i (\pi_i - \bar \pi)^2 \theta_i   \nonumber \\
  &<& 4(1+\epsilon) ( {\cal F}_2 +  \sum_{i \in {\cal B}^c} n_i (\pi_i - \bar \pi)^2 \theta_i).
 \label{eqn:F2}
\end{eqnarray}
Combining   (\ref{eqn:Gn2_1}) and (\ref{eqn:F2}), we have
\begin{eqnarray}
 \sum_{i \in {\cal B}^c} {\cal A}_{1i} \theta_i^2   <    {\cal F}_2 + {\cal G}_{n2} < 4(1+\epsilon)( {\cal F}_2 +  \sum_{i \in {\cal B}^c} n_i (\pi_i - \bar \pi)^2 \theta_i).
   \label{eqn:Gn2}
\end{eqnarray}

From (\ref{eqn:Gn1}) and (\ref{eqn:Gn2}), we conclude, for $K=4(1+\epsilon)$,
\begin{eqnarray*}
 \sum_{i=1}^k  {\cal A}_{1i} \theta_i^2 < Var(T_1) \leq K (\nu_1 + ||{\mathb \pi} - \bar {\mathb \pi} ||_{{\bf n} \theta}^2).
\end{eqnarray*}
In particular,  if $ {\cal B}^c$ is an empty set,
then  we have $Var(T) =  {\cal F}_{1} + 4 {\cal G}_{n1}$, therefore
(\ref{eqn:Gn1}) implies   (\ref{eqn:asympVar}).
\end{enumerate}

\section{Proof of Lemma \ref{lemma:lemma4}}
Let $X = \sum_{i=1}^n X_i$ where $X_i$s are iid Bernoulli($\pi$).
In expansion of $(X-n\pi)$,
each term has the form of $(X_{i_1}-\pi)^{m_1}(X_{i_2}-\pi)^{m_2}\cdots (X_{i_k}-pi)^{m_k}$
for  $1\leq i_1,\ldots, i_k \leq n$  and $ m_1 + \cdots + m_k=n$, so
if there exists at least one $m_{k}=1$, then expectation of the term is zero.
We only need to consider the terms without $(X_{i_j}-\pi)$, so we finally have
\begin{eqnarray*}
E(X-n\pi)^8 &= &  E(\sum_{i=1}^n (X_i -\pi))^8 \\
&=&  {n \choose 1} E(X_1-\pi)^8\\
&& +  2 {8 \choose 6,2} {n \choose 2} E(X_1-\pi)^6 E(X_1-\pi)^2 \\
&&+  2 {n \choose 2} {8 \choose 5,3} E(X_1-\pi)^5 E(X_1-\pi)^3  \\
&&+  {n\choose 2} {8 \choose 4,4}   [E(X_1-\pi)^4]^2 \\
&&+  \frac{3!}{2!}  {n \choose 3} {8 \choose 4,2,2} E(X_1-\pi)^4 [E(X_1-\pi)^2]^2 \\
&&+ \frac{3!}{2!}  {n \choose 3} {8 \choose 3,3,2} [E(X_1-\pi)^3]^2 E(X_1-\pi)^2 \\
&&+ {n \choose 4} {8 \choose 2,2,2,2} [E(X_1-\pi)^2]^4.
\end{eqnarray*}
We have $E(X_1-\pi)^m  =  \sum_{i=0}^m {m \choose i} E(X_1^i) (-\pi)^{m-i} = (-\pi)^m + \sum_{i=1}^m {m \choose i} E(X_1^i) (-\pi)^{m-i}$
and  using   $E(X_1^i) = E(X_i) =\pi$ for $i\geq 1$, we obtain $E(X_1-\pi)^m = (-\pi)^m + \pi \sum_{i=1}^m {m \choose i} (-\pi)^{m-i}
 = (-\pi)^m-\pi(-\pi)^m +  \pi\sum_{i=0}^m {m \choose i} (-\pi)^m = (1-\pi)(-\pi)^m + \pi(1-\pi)^m = \pi(1-\pi)( (-1)^m \pi^{m-1}+(1-\pi)^{m-1})\leq
 \pi(1-\pi)$ for $m \geq 2$.
Since all coefficients in the expansion of $E(\sum_{i=1}^n (X_i - \pi))$
are fixed constants, for some universal constant $C>0$, we have
\begin{eqnarray*}
E(X-n\pi)^8 &\leq& C \max(n\pi(1-\pi), (n\pi(1-\pi))^2,  (n\pi(1-\pi))^3,  (n\pi(1-\pi))^4 )       \\
&=&  C \max \{ n\pi(1-\pi), (n\pi(1-\pi))^4 \}.
\end{eqnarray*}
since maximum is obtained at either $n\pi(1-\pi)$ or $(n\pi(1-\pi))^4$ depending on $n\pi(1-\pi) \leq 1 $ or $n\pi(1-\pi)>1$.

For the second equation,  we first consider  the moment of $E(\hat \pi^4)$ and $E(\hat (1-\hat \pi)^4)$.
The latter one is easily obtained from the first one by changing the distribution from $B(n,\pi)$ to $B(n,1-\pi)$.
We first obtain
\begin{eqnarray*}
E \hat \pi ^4 &=&  \pi^4 + \frac{6\pi^2 \theta}{n} + \frac{4\pi(1-2\pi)\theta}{n^2} + \frac{3\theta^2}{n^2} + \frac{(1-6\theta)\theta}{n^3} \\
&\leq& \pi^4 + \frac{6\pi^3}{n} + \frac{7\pi^2}{n^2} + \frac{\pi}{n^3}  \\
&\leq&   7 \left( \pi^4 + \frac{\pi^3}{n} + \frac{\pi^2}{ n^2} + \frac{\pi}{n^3} \right) \\
 &=&  28 \max\left( \pi^4, \frac{\pi}{n^3}\right)
\end{eqnarray*}
where the last equality holds due to the fact that
the maximum is obtained at either $\pi^4$ or $\frac{\pi}{n^3}$ depending on $\pi \geq \frac{1}{n}$ or $\pi< \frac{1}{n}$.
Similarly, the following inequality is obtained
\begin{eqnarray*}
E (1-\hat \pi)^4 &\leq& 28 \max \left( (1-\pi)^4, \frac{1-\pi}{n^3} \right).
\end{eqnarray*}
Using $E \hat \pi^4 (1-\hat \pi)^4 \leq  \min(E\hat \pi^4, E (1-\hat\pi)^4 )$, we have
\begin{eqnarray*}
E \hat \pi^4 (1-\hat \pi)^4 &\leq&  \min(E\hat \pi^4, E (1-\hat\pi)^4 ) \\
 &\leq& 28 \min \left\{ \max\left( \pi^4, \frac{\pi}{n^3}\right) ,
  \max \left( (1-\pi)^4, \frac{1-\pi}{n^3} \right)   \right\} \\
 &=&   \max\left( \pi^4, \frac{\pi}{n^3}\right)~~~\mbox{if  $\pi \leq \frac{1}{2}$} \\
 &&    \max \left( (1-\pi)^4, \frac{1-\pi}{n^3} \right)~~~\mbox{if  $\pi > \frac{1}{2}$}.
\end{eqnarray*}
If $\pi \leq \frac{1}{2}$, $\pi \geq  2\pi (1-\pi) = 2\theta$; if $\pi > \frac{1}{2}$, $1-\pi \leq 2\theta$. So the last equality is
\begin{eqnarray*}
E \hat \pi^4 (1-\hat \pi)^4 \leq  C' \max \left( \theta^4, \frac{\theta}{n^3} \right)
\end{eqnarray*}
for some universal constant $C'$.

We use the following relationship: for some constants $b_m$, $m=1,\ldots,l-1$
\begin{eqnarray*}
X^l = \prod_{j=1}^{l} (X-j+1) + \sum_{m=1}^{l-1} b_m  \prod_{j=1}^{m} (X-j+1).
\end{eqnarray*}
For example we have $x^3 = x(x-1)(x-2) + 3x(x-1) +x$.
Using $E\prod_{j=1}^{l} (X-j+1) =  \prod_{j=1}^{l} (n-j+1) \pi^l$,
\begin{eqnarray*}
E\hat \pi^l   &=&  \frac{1}{n^l} E\prod_{j=1}^{l} (X-j+1)  +  \frac{1}{n^l} \sum_{m=1}^{l-1} b_m E \left( \prod_{j=1}^{m} (X-j+1) \right)   \\
&=& \pi^l + O(\frac{\pi^l}{n}) +  O \left( \sum_{m=1}^{l-1}  \frac{\pi^m}{n^{l-m}} \right) \\
&=&  \pi^l + O\left(\frac{\pi^{l-1}}{n} + \frac{\pi}{n^{l-1}}   \right).
\end{eqnarray*}
Using this, we can derive
\begin{eqnarray*}
E(\hat \pi^l -\pi^l)^2 = E \hat \pi^{2l} - 2\pi^l E \hat \pi^l + \pi^{2l} =  O\left(\frac{\pi^{2l-1}}{n} + \frac{\pi}{n^{2l-1}} \right).
\end{eqnarray*}
\begin{eqnarray*}
E(\hat \eta_l - \pi^l)^2 = E(\hat \eta_l -\hat \pi_l +\hat \pi_l - \pi_l)^2
\leq 2^2 E(\hat \eta_l -\hat \pi^l)^2 + 2^2 E(\hat \pi^l - \pi^l)^2.
\end{eqnarray*}

Since $\hat \eta_l -\hat \pi^l = {\hat \pi^l}O(\frac{1}{n})  + \sum_{i=1}^{l-1} {\hat \pi^{l-i}} O(\frac{1}{n^i})$,
 we have $E(\hat \eta_l -\hat \pi^l)^2 \leq  \left\{E(\hat \pi^{2l})O(\frac{1}{n^2}) + \sum_{i=1}^{l-1}  {\hat \pi^{2l-2i}} O(\frac{1}{n^{2i}}) \right\}.$
Using $E \hat \pi ^{2l}=\pi^{2l}+ O \left( \frac{\pi}{n^{2l-1}} + \frac{\pi^{2l-1}}{n} \right)$ from (\ref{eqn:hatpipowerk}),
we obtain
\begin{eqnarray}
E(\hat \eta_l -\hat \pi^l)^2   &=& O\left(\frac{1}{n^2} \right)
\left(\pi^{2l} + O \left( \frac{\pi}{n^{2l-1}} + \frac{\pi^{2l-1}}{n} \right) \right)  \nonumber \\
&&+  \sum_{i=1}^{l-1} \left(\pi^{2l-2i} + O\left( \frac{\pi}{n^{2l-2i-1}}  + \frac{\pi^{2l-2i-1}}{n}  \right) \right) O\left(\frac{1}{n^{2i}} \right)   \nonumber \\
&=& O\left( \sum_{i=1}^{l-1} \frac{\pi^{2(l-i)}}{n^{2i}} + \frac{\pi}{n^{2l-1}}  \right)  \label{eqn:lasthateta}.
\end{eqnarray}
We can show  $\frac{\pi^{2(l-i)}}{n^{i}} \leq  \frac{\pi}{n^{2l-1}} + \frac{\pi^{2l-1}}{n}$
for $2\leq i \leq l-1$ since
$\frac{\pi^{2(l-i)}}{n^{i}} \leq \frac{\pi^{2l-1}}{n}$ for $\pi \geq \frac{1}{n}$ and
$\frac{\pi^{2(l-i)}}{n^{i}} \leq \frac{\pi}{n^{2l-1}}$ for $\pi <\frac{1}{n}$.
Using this, we have $(\ref{eqn:lasthateta})  \leq  O\left( \frac{\pi}{n^{2l-1}} + \frac{\pi^{2l-1}}{n} \right)$
which proves   $E(\hat \eta_l -\hat \pi^l)^2   = O\left( \frac{\pi}{n^{2l-1}} + \frac{\pi^{2l-1}}{n} \right)$.

\section{Proof of Lemma \ref{lemma:ratio}}
 For the ratio consistency of $\hat{\cal{V}}_1$,
it is enough to show $ \frac{E[( \hat {\cal V}_1  - {\cal V}_1 )^2]}{({\cal V}_1)^2} \rightarrow 0$ as $k \rightarrow \infty$.
Since $\hat {\cal V}_1$ is an unbiased estimator of ${\cal V}_1$,
\begin{eqnarray*}
Var(\hat{\cal V}_1) &=& E[(\hat {\cal V}_1 -  {\cal V}_1)^2]\\
 &=& \sumk \sum_{l=1}^4
a^2_{li} E [( \hat \eta_{li}- \eta_{li})^2] + \sum_{i \neq i'} \sum_{l \neq l'} a_{li} a_{l'i'} E[ (\hat \eta_{li} - \eta_{li})(\hat
\eta_{l'i'}-\eta_{l'i'})] \\ &=& \sumk \sum_{l=1}^4 a^2_{li} E [( \hat \eta_{li}- \eta_{li})^2]
\end{eqnarray*}
where the last equality follows since $E[(\hat \eta_{li}- \eta_{li})(\hat \eta_{l'i'} - \eta_{l'i'})] = E[(\hat \eta_{li}-\eta_{li})] E[(\hat
\eta_{l'i'}-\eta_{l'i'})]=0$ because $\hat \eta_{li}$ and $\hat \eta_{l'i'}$ are independent for $i \neq i'$ and both are unbiased estimators.
Since ${\cal V}_1$ depends on
$\theta_i=\pi_i(1-\pi_i)$, we have the same result if we change $\pi_i$ to $1-\pi_i$; in other words,
$Var(\hat {\cal V}_1) =\sum_{i=1}^k \sum_{l=1}^4 a_{li}^2 (\eta_{li} -\eta_{li})^2 =
\sum_{i=1}^k \sum_{l=1}^4 a^2_{li} (\hat \eta^*_{li} -\eta^*_{li})^2$ where
$\eta^*_{li} = (1-\pi_i)^l$ and $\hat \eta^*_{li}$ is the corresponding unbiased estimator.
For $\pi \leq 1/2$, we use ${\cal V}_1 =\sum_{i=1}^k \sum_{l=1}^4 a_{li} \pi_i^l$ and
obtain $Var(\hat {\cal V}_1) =  O(\sum_{i=1}^k  (\frac{\pi_i^3}{n_i} + \frac{\pi_i}{n_i^3}))$ from Lemma \ref{lemma:lemma4}.
Since $ \pi_i \leq \delta <1$, we have $Var(\hat {\cal V}_1) =  O(\sum_{i=1}^k  (\frac{\pi_i^3}{n_i} + \frac{\pi_i}{n_i^3}))
=  O(\sum_{i=1}^k  (\frac{\theta_i^3}{n_i} + \frac{\theta_i}{n_i^3})).$
%
From Lemma \ref{lemma:vart1} and the given condition, we obtain
\begin{eqnarray*}
\frac{Var(\hat{\cal V}_1)}{  {\cal V}_1^2 } &= & O\left(\frac{\sum_{i=1}^k \left(\frac{\theta_i^3}{n_i}+ \frac{\theta_i}{n_i^3}\right)}
{(\sum_{i=1}^k (\theta_i^2  + \frac{1}{N^2} \frac{\theta_i}{n_i} ) )^2} \right) =o(1).
\end{eqnarray*}

Similarly,  we can show, for some constant $C'$,
\begin{eqnarray*}
\frac{Var(\hat{\cal V}_{1*})}{ ({\cal V}_{1*})^2 } =
O\left(\frac{ (\tilde \theta)^3 \sum_{i=1}^k \frac{1}{n_i} + \tilde \theta \sumk \frac{1}{n_i^3} }{  (k (\tilde \theta)^2  + \frac{\tilde \theta }{N^2} \sum_{i=1}^k \frac{1}{n_i})^2 } \right)  =o(1).
\end{eqnarray*}

\section{Proof of Theorem \ref{cor:size}}
Since the condition in Lemma \ref{lemma:ratio} holds,
$\hat {\cal V}_1$  and $\hat {\cal V}_{1*}$ are
the ratio consistent estimator of
$ {\cal V}_1 =  {\cal V}_{1*}$ under the $H_0$.
From $ \frac{T}{\sqrt{{\cal V}_1}} = \frac{T_1 - T_2}{\sqrt{{\cal V}_1}}$, we only need to show
$(i)$
$\frac{T_1}{\sqrt{{\cal V}_1}} \rightarrow N(0,1)$ in distribution and
$(ii)$ $\frac{T_2}{\sqrt{{\cal V}_1}} \rightarrow 0$ in probability.
To prove $(i)$, we show the Lyapounov's condition (see Billingsley(1995))  for the asymptotic normality is satisfied.
In other words, under $H_0$, we need to show  $\frac{\sum_{i=1}^k E(T_{1i}^4)}{ Var(T_1)^2} \rightarrow 0$.
Under $H_0$,  we have $T_{1i} = n_i(\hat \pi_i -\pi_i)^2 - d_i \hatvari$ with $E(T_{1i})=0$, therefore
the Lyapounov's condition is $ \sumk E(T_{1i}^4)/ Var(T_1)^2 \rightarrow 0$.
Using Lemma 4,  we have $\sumk E(T_{1i}^4) \leq 2^4(\sumk n_i^4 E(\hat \pi_i -\pi_i )^4 + d_i^4 E(\hatvari)^4 )
=  O( \sumk (\theta_i^4 + \frac{\theta_i}{n_i})) + O( \sumk (\theta_i^4  + \frac{\theta_i}{n_i}^3) ) =
 O( (k \theta^4 + \frac{\theta} \sumk \frac{1}{n_i}))$ since all $\theta_i =\theta$ under $H_0$.
Combining this with the result 1 in Lemma  \ref{lemma:vart1}, we have
$\frac{\sum_{i=1}^k E(T_{1i}^4)}{ Var(T_1)^2}  =  \frac{ O(k\theta^4 + \theta \sumk \frac{1} {n_i}) }{ (k\theta^2 + \frac{\theta}{N^2} \sumk \frac{1}{n_i})^2} \leq   \frac{ k \theta^4 + \theta\sumk \frac{1}{n_i}}{ k \theta^4} = \frac{1}{k} + \frac{\sumk \frac{1}{n_i}}{ k \theta^3} \rightarrow 0$ as $k \rightarrow \infty$ from the given condition $\frac{\sumk \frac{1}{n_i}}{ k \theta^3} \rightarrow 0$ which shows $\frac{T_1}{\sqrt{{\cal V}_1}} \rightarrow N(0,1)$ in distribution.

Furthermore, from the Lemma \ref{lemma:vart1} under the $H_0$, we have
${\cal V}_1 \asymp   k\theta^2 + \theta \sumk \frac{1}{n_i} $, therefore we obtain
$E\left(\frac{T_2}{\sqrt{{\cal V}_1}} \right) = \frac{E( N(\hat{\bar \pi} - \bar \pi)^2   )}{\sqrt{{\cal V}_1}} \asymp
\frac{ \theta }{\sqrt{k \theta^2  + \frac{\theta}{N^2} \sumk \frac{1}{n_i} }} \leq \frac{1}{\sqrt{k}} \rightarrow 0$
which leads to  $\frac{T_2}{\sqrt{{\cal V}_1}} \rightarrow 0$ in probability.
Combining the asymptotic normality of $\frac{T}{\sqrt{{\cal V}_1}}$
with the ratio consistency of $\hat {\cal V}_1$ and $\hat {\cal V}_{1*}$,
we have the asymptotic normality of $T_{new1}$ and $T_{new2}$ under the $H_0$.

\section{Proof of Theorem \ref{thm:normality}}
 Since $T=T_1 -T_2$ from (\ref{eqn:T1T2}),
we only need to show the following:
\begin{enumerate}
\item [(I)] $\frac{T_1 -\sum_{i=1}^k n_i (\pi_i - \bar \pi)^2 }{\sqrt{{Var(T_1)}}} \rightarrow N(0,1)$ in distribution
\item [(II)]  $\frac{T_2}{\sqrt{{Var(T_1)}}} \rightarrow 0$ in probability.
\end{enumerate}
For $(I)$,  we use the Lyapounov's condition for the asymptotic normality of $T_1$.
We show  $\frac{\sum_{i=1}^k E(T_{1i}- n_i(\pi_i -\bar \pi)^2)^4}{Var(T_1)^2 }  \rightarrow 0$
where  $ G_i = T_{1i}- n_i(\pi_i -\bar \pi)^2 = n_i (\hat \pi_i -\pi_i)^2 -d_i \hat \pi_i (1-\hat \pi_i) + 2n_i (\hat \pi_i -\pi_i)(\pi_i-\bar \pi)$.
Using $\sum_{i=1}^k E(G_i^4) \leq  \sum_{i=1}^k
\left( n_i^4E((\hat \pi_i - \pi_i)^8) +  d_i^4  E( (\hat \pi_i (1-\hat \pi_i))^4) + 2^4 n_i^4 E(\hat \pi_i -\pi_i)^4
(\pi_i-\bar \pi)^4 \right)$.
From Lemma 4,  we have $n_i^4E((\hat \pi_i - \pi_i)^8)   \leq  O\left(\theta_i^4 + \frac{\theta_i}{n_i} \right)$,
$d_i^4  E( (\hat \pi_i (1-\hat \pi_i))^4) \leq 2^4 \left(\frac{3\theta_i^2}{n_i^2}  + \frac{(1-6\theta_i)\theta_i}{n_i^3} \right)
\leq O( \frac{\theta_i^2}{n_i^2} + \frac{\theta_i}{n_i^3} )$ where $O(\cdot)$ is uniform in $1\leq i \leq k$.
Using the result in Lemma 1, we have  $2^4 \sum_{i=1}^k n_i^4 E(\hat \pi_i -\pi_i)^4
\sum_{i=1}^k (\pi_i-\bar \pi)^4  \leq 2^4 \sum_{i=1}^k  n_i^4 (\pi_i-\bar \pi)^4 \left(\frac{3\theta_i^2}{n_i^2}+ \frac{(1-6\theta_i)\theta_i}{n_i^3} \right)
\leq   \max_{1\leq i \leq k} \left\{n_i (\pi_i-\bar \pi)^2 ( \theta_i + \frac{1}{n_i}) \right\}  \sum_{i=1}^k n_i (\pi_i-\bar \pi)^2 \theta_i
=   \max_{1\leq i \leq k} \left\{n_i (\pi_i-\bar \pi)^2 ( \theta_i + \frac{1}{n_i}) \right\}
||{\mathb \pi} - \bar {\mathb \pi} ||^2_{\theta{\bf n}}$.
Therefore, we have
\begin{eqnarray}
\frac{\sum_{i=1}^k E(G_i^4)}{Var(T_1)^2} &\leq& \frac{  \sum_{i=1}^k \left(\theta_i^4  + \frac{\theta_i}{n_i} \right) +      \max_{1\leq i \leq k} \left\{n_i (\pi_i-\bar \pi)^2 ( \theta_i + \frac{1}{n_i}) \right\}
||{\mathb \pi} - \bar {\mathb \pi} ||^2_{\theta{\bf n}} }{ ({\cal \nu}_1 + ||{\mathb \pi} - \bar {\mathb \pi} ||^2_{\theta{\bf n}})^2 }\\
&=&\frac{ \sum_{i=1}^k (\theta_i^3 + \frac{\theta_i}{n_i})}
{\left(\sum_{i=1}^k (\theta_i^2 + \frac{\theta_i}{n_i})\right)^2}
+ \frac{ \max_{1\leq i \leq k} \left\{n_i (\pi_i-\bar \pi)^2 ( \theta_i + \frac{1}{n_i}) \right\} }
{{\cal \nu} +  ||{\mathb \pi} - \bar {\mathb \pi} ||^2_{\theta{\bf n}} }\rightarrow 0
\end{eqnarray}
from the given conditions.

The negligibility of $T_2 = N(\hat {\bar \pi} - \bar \pi)^2$
can be proven by noting that
$\frac{NE(\hat {\bar \pi} - \bar \pi)^2}{\sqrt{Var(T_1)}} = \frac{ \bar \theta }{\sqrt{Var(T_1)}}
    =  \frac{1}{N} \frac{\sumk  n_i \theta_i}{\sqrt{Var(T_1)}}
    \asymp   \frac{\max_i \theta_i \sumk n_i} { N \sqrt{ {\cal V}_1 +  ||{\mathb \pi} - \bar {\mathb \pi} ||^2_{\theta {\bf n}}  } }$
    by  (\ref{eqn:asympVar}) from the condition $(i)$.
  This leads to
      $\left(  \frac{\max_i \theta_i^2 } {{ {\cal V}_1 +  ||{\mathb \pi} - \bar {\mathb \pi} ||^2_{\theta {\bf n}}  } }  \right)^{1/2}      \rightarrow 0$ from the condition $(ii)$, so we have
 $\frac{N(\hat {\bar \pi} - \bar \pi)^2}{\sqrt{Var(T_1)}} \rightarrow 0$  in probability.
Combining (I) and (II), we conclude $\frac{T - \sum_{i=1}^k n_i (\pi_i - \bar \pi)^2}{\sqrt{{Var(T_1)}}} \rightarrow N(0,1)$ in distribution.

\section{Proof of Theorem \ref{thm:powercomparison}}
\begin{enumerate}
\item Proof of 1 : We prove $\beta(T_{new2}) \geq \beta(T_{new1})$. For this, we only need to show that
${\cal V}_{1} \geq {\cal V}_{1*}$ from Corollary \ref{cor:cor1}.
Let  $f(x)= 2 x^2(1-x)^2 + \frac{x(1-x)}{n}$, then
 $f(x)$ is convex for $0 < x <  \frac{1}{2} -\frac{1}{\sqrt{3}} \sqrt{1+\frac{1}{n}}$
 since $f''(x) >0 $ for $0 < x <  \frac{1}{2} -\frac{1}{\sqrt{3}} \sqrt{1+\frac{1}{n}}$.
Furthermore,  ${\cal V}_1 = \sumk f(\pi_i)$ and ${\cal V}_{1*} = k f(\bar \pi)$ for
$\bar \pi = \frac{1}{N} \sumk n_i \pi_i$.
From the convexity of $f$,
if $n_i=n$ for all $1\leq i\leq k$, we have
$\frac{1}{k} {\cal V}_1 = \frac{1}{k} \sumk f(\pi_i) \geq  f(\bar \pi) = \frac{1}{k} {\cal V}_{1*}$.
Therefore, ${\cal V}_1 \geq {\cal V}_{1*}$ which leads to
$ \lim_{k\rightarrow \infty}(\beta(T_{new2}) - \beta(T_{new1})) \geq 0$
for the given $0 < \pi_i <  \frac{1}{2} -\frac{1}{\sqrt{3}}$ for all $i$.

Under the given condition,  $\hat {\cal B}_{0k} = 2k (1+o_p(1))$ and
\begin{eqnarray*}
T_{new2} &=&   \frac{ \sum_{i=1}^k n_i (\hat \pi_i -\hat {\bar \pi})^2 -   \sum_{i=1}^k \hat {\pi}_{i}(1-\hat {\pi}_i)  }{\sqrt{2k \hat {\bar \pi} (1-\hat {\bar \pi}) }} (1+o_p(1))\\
T_{\chi} &=&  \frac{ \sum_{i=1}^k n_i (\hat \pi_i -\hat {\bar \pi})^2 -   k \hat {\bar \pi}(1-\hat {\bar \pi})  }{\sqrt{2k \hat {\bar \pi} (1-\hat {\bar \pi}) }} (1+o_p(1))
\end{eqnarray*}
which leads to
\begin{eqnarray*}
T_{new2} - T_{\chi} =  \frac{k \hat {\bar \pi}(1-\hat {\bar \pi}) -   \sum_{i=1}^k \hat {\pi}_{i}(1-\hat {\pi}_i)}{\sqrt{2k \hat {\bar \pi} (1-\hat {\bar \pi})}} (1+o_p(1)).
\end{eqnarray*}
Using $k\hat {\bar \pi}(1-\hat {\bar \pi}) \geq \sum_{i=1}^k \hat \pi_i(1-\hat \pi_i)$,
 $  \lim_{k \rightarrow \infty} P( T_{new2} - T_{\chi} \geq 0) \rightarrow 1$ which leads to
$\lim_{k \rightarrow \infty} (\beta(T_{new2}) - \beta(T_{\chi})) \geq 0$.
\item
Proof of 2:
Note that ${\cal A}_{1i} = 2(1+o(1))$ and ${\cal A}_{2i} = 4(1+o(1))$
where $o(1)$ is uniform in $i$.
Using $\bar \pi = (k^{-\gamma} + \delta k^{\alpha-1}) (1+O(k^{-1}))$ and $\tilde \theta = \bar \pi (1+ o(1))$,  we obtain
\begin{eqnarray*}
{\cal V}_1 &=&  \left( 2 \sumk \theta_i^2  +  4 \sumk \frac{\theta_i}{n_i} \right) (1+o(1))   =
  \left(2 (k-1) k^{-2\gamma} + 2 (k^{-\gamma}+\delta)^2+   \frac{(k-1)k^{-\gamma}}{n} + \frac{k^{-\gamma} + \delta}{ n k^{\alpha}} \right) \\
&=&  (2k^{1-2\gamma} + 2 \delta^2 +  \frac{4k^{1-\gamma}}{n}) (1+o(1))\\
{\cal V}_{1*} &=&
 2 k(k^{-\gamma} + \delta k^{\alpha-1})^2 (1+O(k^{-1})) +  4 \tilde \theta \sumk \frac{1}{n_i} \\
   &=&
2 k^{1-2\gamma} + 4\delta k^{\alpha-\gamma} +2 \delta^2 k^{2\alpha-1} +
 4(k^{-\gamma} + \delta k^{\alpha -1}) \left( \frac{k-1}{n} + \frac{1}{ nk^{\alpha}} \right) (1+o(1)) \\
 &=&  2 k^{1-2\gamma} + 4 \delta k^{\alpha-\gamma} +2 \delta^2 k^{2\alpha-1}
 +  4 \frac{k^{1-\gamma} + \delta k^{\alpha}}{n} (1+o(1))
\end{eqnarray*}
so
\begin{eqnarray}
\frac{{\cal V}_{1*} - {\cal V}_1}{{\cal V}_1} =
\frac{ (2\delta k^{\alpha -\gamma} + \delta^2 (k^{2\alpha-1}-1)) + 2\frac{\delta k^{\alpha}}{n} (1+o(1))}
{k^{1-2\gamma} + 2 \delta^2 + 2 \frac{k^{1-\gamma}}{n} (1+o(1))}.
\label{eqn:Vratio}
\end{eqnarray}

\begin{enumerate}
\item if $\alpha + \gamma <1$ and $\alpha \geq \frac{1}{2}$,
then $k^{\alpha-\gamma} = o(k^{2\alpha-1})$, therefore
$(\ref{eqn:Vratio})  =  \frac{\delta^2 k^{2\alpha-1}  I(\alpha \neq \frac{1}{2}) + 2\frac{k^{\alpha}}{n}}{ k^{1-2\gamma} + \delta^2 + 2\frac{k^{1-\gamma}}{n}}
\rightarrow 0$
where $I(\cdot)$ is an indicator function.

\item if $\alpha+\gamma<1$, $\alpha < \frac{1}{2}$
 and $\alpha \geq \gamma$, then
 $(\ref{eqn:Vratio}) = \frac{2 \delta k^{\alpha-\gamma}-\delta^2 + 2\frac{k^{\alpha}}{n}}{k^{1-2\gamma} + \delta^2 + 2\frac{k^{1-\gamma}}{n}} \rightarrow 0$.

 \item if $\alpha+\gamma <1$, $\alpha < \frac{1}{2}$, $\gamma \leq \frac{1}{2}$ and $\alpha<\gamma$,
 then $(\ref{eqn:Vratio})= \frac{-\delta^2 + 2\frac{k^{\alpha}}{n}}{ k^{1-2\gamma} + \delta^2 + 2\frac{k^{1-\gamma}}{n}} \rightarrow 0$.

 \item if $\alpha+\gamma <1$, $\alpha < \frac{1}{2}$ and $\gamma > \frac{1}{2}$, then there are two cases depending on
 the behavior of $n$.
 When $\frac{k^{1-\gamma}}{ n} \rightarrow 0$, then
 $(\ref{eqn:Vratio}) \rightarrow \frac{-\delta^2}{\delta^2} = -1$.
 When $\frac{k^{1-\gamma}}{n} \rightarrow \infty$,
 $(\ref{eqn:Vratio}) = \frac{\frac{k^{\alpha}}{n}}{\frac{k^{1-\gamma}}{n}} (1+o(1)) = k^{\alpha+\gamma -1} \rightarrow 0$.

 \item if $\alpha+\gamma >1$, $\alpha>\frac{1}{2}$ and $\gamma < \frac{1}{2}$, then
 $(\ref{eqn:Vratio}) = \frac{\delta^2 k^{2\alpha-1} + 2\frac{k^{\alpha}}{n}}{ k^{1-2\gamma} + 2\frac{k^{1-\gamma}}{n}} (1+o(1))
 \rightarrow \infty$.

 \item if $\alpha+\gamma >1$, $\alpha>\frac{1}{2}$ and $\gamma \geq \frac{1}{2}$, then
  $(\ref{eqn:Vratio}) =   \frac{ k^{\alpha-\gamma}  + \delta^2 k^{2\alpha-1} + 2\frac{k^{\alpha}}{n} }{ I(\gamma=\frac{1}{2}) +\delta^2 + 2\frac{k^{1-\gamma}}{n}}  (1+o(1)) \rightarrow \infty$.

 \item  if $\alpha+\gamma >1$, $\alpha<\frac{1}{2}$ and $\gamma>\frac{1}{2}$,
 then   $\alpha<\gamma$ and  $(\ref{eqn:Vratio}) =   \frac{ - \delta^2  + 2\frac{k^{\alpha}}{n} }{  \delta^2 + 2\frac{k^{1-\gamma}}{n}} (1+o(1)) = \frac{-\delta^2 + \frac{k^{\alpha}}{n}}{ \delta^2 + 2\frac{k^{1-\gamma}}{n}} (1+o(1))$.
 There are two situations depending $n$.
When $\frac{k^{\alpha}}{n} \rightarrow \infty$,  $(\ref{eqn:Vratio}) =
\frac{-\delta^2 + \frac{k^{\alpha}}{n}}{ 2\delta^2 + \frac{k^{1-\gamma}}{n}} (1+o(1)) \rightarrow \infty$.
When    $\frac{k^{\alpha}}{n} \rightarrow 0$,  we have  $\frac{k^{1-\gamma}}{n} \rightarrow 0$, so we derive
$(\ref{eqn:Vratio}) =
\frac{-\delta^2}{ \delta^2 } (1+o(1)) \rightarrow -1$.
\end{enumerate}
In $ (a) \cup (b) \cup (c) = \{ (\alpha,\gamma) :  0<\alpha<1, 0<\gamma<1,
 0< \alpha + \gamma <1, 0< \gamma \leq \frac{1}{2} \}$,  we have
$\lim_n \frac{{\cal V}_{1*}}{{\cal V}_1} =1$ leading to
$\lim_n (\beta(T_{new1}) - \beta(T_{new1}))=0$.
In  $(e)\cup (f) = \{(\alpha, \gamma) : 0<\alpha<1, 0<\gamma<1, \alpha + \gamma > 1, 1>\alpha>\frac{1}{2} \}$,
 we have $ \lim \frac{{\cal V}_{1*}}{{\cal V}} >1$ which leads to
 $\lim_n (\beta(T_{new1}) - \beta(T_{new2})) >0$.

In (e) and (g), the performances are different depending on the sample sizes.

\item We first have
\begin{eqnarray*}
{\cal V}_1 &=&  2 (k^{-\gamma} + \delta)^2  + 2 (k -1) k^{-2\gamma}  +  \frac{4(\delta + k^{-\gamma})} {n}
+ 4 (k-1)\frac{k^{-\gamma}}{ nk^{\alpha}} \\
&=& \left(2 \delta^2 + 2 k^{1-2\gamma} + \frac{4 k^{1-\gamma -\alpha}}{n} \right)(1+o(1)).
\end{eqnarray*}
Since $\tilde \theta = \bar \pi (1-\bar \pi) = \frac{\delta + k^{\alpha -\gamma +1}}{k^{\alpha+1}} (1+o(1)) = k^{-\gamma}(1+o(1))$
from $  0< \alpha <1$ and $0<\gamma <1$,
\begin{eqnarray*}
{\cal V}_1^* &=&  2 k^{1-2\gamma}  +   \frac{4k^{-\gamma}}{n} + \frac{ (k-1)k^{-\gamma} }{nk^{\alpha}}      \\
&=& \left(2 k^{1-2\gamma} + \frac{4 k^{1-\gamma -\alpha}}{n} \right)(1+o(1)).
\end{eqnarray*}
If $1-2\gamma <0$ and $ k^{1-\gamma -\alpha} =o(n)$, then
${\cal V}_1 = \delta^2 (1+o(1))$ and ${\cal V}_1^* =o(1)$,
we have $\frac{{\cal V}_1}{ {\cal V}_1^* } \rightarrow \infty$ which leads to  $ \beta(T_{new2}) - \beta(T_{new1}) >0$.
\end{enumerate}
\end{document}